\numberwithin{equation}{section}
\newtheorem{Theorem}[equation]{Theorem}
\newtheorem{Proposition}[equation]{Proposition}
\newtheorem{Corollary}[equation]{Corollary}
\newtheorem{Lemma}[equation]{Lemma}
\def\Xint#1{\mathchoice
{\XXint\displaystyle\textstyle{#1}}%
{\XXint\textstyle\scriptstyle{#1}}%
{\XXint\scriptstyle\scriptscriptstyle{#1}}%
{\XXint\scriptscriptstyle\scriptscriptstyle{#1}}%
\!\int}
\def\XXint#1#2#3{{\setbox0=\hbox{$#1{#2#3}{\int}$}
\vcenter{\hbox{$#2#3$}}\kern-.5\wd0}}
\def\dashint{\Xint-}
\def\supp{{\rm supp}\ }
\def\bbR{\mathbb{R}}
\def\Rn{\mathbb{R}^n}
\def\ch{\rm{ch}}
\def\dw{\textup{d}w}
\newcommand{\abs}[1]{|#1|}
\newcommand{\ave}[1]{\langle #1 \rangle}
\def\re{\textup{Re}}
\begin{document}

\title[New bounds for bilinear CZO]{New bounds for bilinear  Calder\'on-Zygmund \\ operators and applications}

\author[Wendol\'in Dami\'an, Mahdi Hormozi and Kangwei Li]{Wendol\'in Dami\'an, Mahdi Hormozi and Kangwei Li}

\address[W. Dami\'an]{Department of Mathematics and Statistics, P.O.B.~68 (Gustaf H\"all\-str\"omin katu~2b), FI-00014 University of Helsinki, Finland}

\email{wendolin.damian.gonzalez@gmail.com}

\address[K. Li]{Department of Mathematics and Statistics, P.O.B.~68 (Gustaf H\"all\-str\"omin katu~2b), FI-00014 University of Helsinki, Finland. }
\curraddr{\sc{BCAM--Basque Center for Applied Mathematics, Mazarredo, 14. 48009 Bilbao Basque Country, Spain}}

\email{kli@bcamath.org}

\address[M. Hormozi]{Department of Mathematical Sciences, Division of Mathematics,
University of Gothenburg,
Gothenburg 41296, Sweden \& 
 Department of Mathematics, Shiraz University, Shiraz 71454, Iran}

\email{me.hormozi@gmail.com}

\thanks{W.D. and K.L. were supported by the European Union through T. Hyt\"onen's ERC
Starting Grant
``Analytic-probabilistic methods for borderline singular integrals''.
They are members of the Finnish Centre of Excellence in Analysis and
Dynamics Research. W.D. is also supported by the Spanish Ministry of Economy and Competitiveness through the project MTM2015-63699-P ``An\'alisis Complejo, Espacios de Banach y Operadores''. K.L. is also supported by the Basque Government through the BERC 2014-2017 program and by Spanish Ministry of Economy and Competitiveness MINECO: BCAM Severo Ochoa
excellence accreditation SEV-2013-0323.
}

\date{\today}

\keywords{domination theorem, Dini condition, multilinear Calder\'on--Zygmund operators, commutators, square functions, Fourier multipliers}
\subjclass[2010]{Primary: 42B20, Secondary: 42B25}

\begin{abstract}
In this work we extend Lacey's domination theorem to prove the pointwise control of bilinear Calder\'on--Zygmund operators with Dini--continuous kernel by sparse operators. The precise bounds are carefully tracked following the spirit in a recent work of Hyt\"onen, Roncal and Tapiola. We also derive new mixed weighted estimates for a general class of bilinear dyadic positive operators using multiple $A_{\infty}$ constants inspired in the Fujii-Wilson and Hrus\v{c}\v{e}v classical constants. These estimates have many new applications including mixed bounds for multilinear Calder\'on--Zygmund operators and their commutators with $BMO$ functions, square functions and multilinear Fourier multipliers.
\end{abstract}
\maketitle

\section{Introduction}\label{Sect:1}

In the last decades, several advances have been carried out in the fruitful area of weighted inequalities concerning the precise determination of the optimal bounds of the weighted operator norm of Calder\'on--Zygmund operators in terms of the $A_p$ constant of the weights. It has been a long journey from the proof of the linear dependence on the $A_2$ constant of $w$ of the $L^2(w)$ norm of the Ahlfors-Beurling transform \cite{PV} leading to the full proof of the $A_2$ theorem due to T. Hyt\"onen \cite{Hytonen_Annals}, plenty of previous partial attempts by others. We refer the interested reader to \cite{Hytonen_Annals,Ler1} and the references therein for a survey on the advances on the topic.

After Hyt\"onen's proof, A. Lerner \cite{Ler1} gave an alternative proof of the $A_2$ theorem which showed that Calder\'on--Zygmund operators can be controlled in norm from above by a very special dyadic type operators defined by means of the concept of \textit{sparseness}. More precisely, if $\mathcal{S}$ is a collection of dyadic cubes within a dyadic grid $\mathcal{D}$ (see Sect.~\ref{Sect:2} for the definition), we say that the operator $\mathcal{A}_{\mathcal{S}}$ is \textit{sparse} if
\begin{equation}
\mathcal{A}_{\mathcal{S},\mathcal{D}} f(x) = \sum_{Q\in\mathcal{S}} \langle f\rangle_Q1_Q(x),
\end{equation}
where $\mathbf 1_Q$ is the characteristic function of the cube $Q$ and the collection $\mathcal{S}$ satisfies that there exists some $\gamma\in (0,1)$ such that for each $Q\in\mathcal{S}$,
$$\sum_{S'\in ch_{\mathcal{S}}(S)}|S'| \leq \gamma |S|,$$
for every $S\in\mathcal{S}$. Here $ch_{\mathcal{S}}(S)$ denotes the set of the $\mathcal{S}$-children of a dyadic cube $S$. Namely, the set of the maximal cubes $S'\in\mathcal{D}$ such that $S'\subsetneq S$.
One remarkable aspect from Lerner's proof is its flexibility to be adapted to the multilinear setting. In fact, in \cite{DLP} the first author, A. Lerner and C. P\'erez proved that multilinear Calder\'on--Zygmund operators can be controlled from above in norm by a supremum of sparse operators. More precisely, if $X$ is a Banach function space over $\Rn$ equipped with the Lebesgue measure, it holds that for any appropriate $\vec{f}$,
\begin{equation}\label{eq:normcontrolCZO}
  ||T(\vec{f})||_{X} \leq C_{T,n,m} \sup_{\mathcal{D},\mathcal{S}} ||\mathcal{A}_{\mathcal{D},\mathcal{S}}(|\vec{f}|)||_{X},
\end{equation}
where
\begin{equation}
\mathcal{A}_{\mathcal{D},\mathcal{S}}(\vec{f}):= \sum_{Q\in\mathcal{S}}\prod_{i=1}^m \langle f_i \rangle 1_Q,
\end{equation}
and the supremum is taken over arbitrary dyadic grids $\mathcal{D}$ and sparse families $\mathcal{S}\in\mathcal{D}$.  As an application of this result, it was derived a multilinear analogue of the $A_2$ theorem, proving that in this more general scenario, a linear bound on the corresponding multiple weight constant also holds. Lately, this result was extended by the third author, K. Moen and W. Sun \cite{LMS} who proved the sharp bounds for the class of multilinear sparse operators from which follows the sharp bounds for Calder\'on--Zygmund operators. More precisely, if $\overrightarrow{w}=(w_1,\ldots,w_m)$ are weights, $1<p_1,\ldots,p_m<\infty$ and $p$ are numbers verifying that $\frac{1}{p}=\frac{1}{p_1}+\ldots+\frac{1}{p_m}$ and we denote $\overrightarrow P = (p_1,\ldots,p_m)$,
\begin{equation}
  ||\mathcal{A}_{\mathcal{S},\mathcal{D}}(\vec{f})||_{L^p(\nu_{\vec{w}})} \lesssim [\vec{w}]_{A_{\vec{P}}}^{\max{(1,\frac{p_1'}{p},\ldots,\frac{p_m'}{p})}}\prod_{i=1}^m ||f_i||_{L^{p_i}(w_i)}.
\end{equation}
Here $\nu_{\vec{w}}=\prod_{i=1}^m w_i^{p/p_i}$ and the multiple $A_{\vec{P}}$ constant is defined as follows,
\begin{equation}\label{eq:AP_multiple}
[\vec{w}]_{A_{\vec{P}}}=\sup_{Q}\left(\frac{1}{|Q|}\int_Q\nu_{\vec{w}}\right)\prod_{i=1}^m\left(\frac{1}{|Q|}\int_Q w_i^{1-p'_i}\right)^{p/p'_i}<\infty.
\end{equation}
However, the problem of finding the sharp bounds in the multilinear setting for the full range of exponents was still open, since \eqref{eq:normcontrolCZO} does not apply if $1/m<p<1$, in which case $L^p(\nu_{\vec{w}})$ is not a Banach function space.

Later on, this problem was solved independently by A. Lerner and F. Nazarov \cite{LN} and J.M. Conde--Alonso and G. Rey \cite{C-AR}. The main idea in both works was a pointwise control of multilinear Calder\'on--Zygmund operators by sparse operators avoiding the use of the adjoint operators and duality, which was the key point in Lerner's original proof.

Another remarkable improvement in \cite{C-AR, LN} was considering weaker regularity conditions on the kernels of Calder\'on--Zygmund operators.  In fact, in both works it was considered the case of log-Dini continuous kernels.
Notwithstanding, this pointwise control also holds in the linear setting under the weaker Dini condition, as recently shown by M. Lacey \cite{Lac} in a qualitative way or, shortly after, by T. Hyt\"onen, L. Roncal and O. Tapiola \cite{HRT} tracking the precise dependence on the constants.

The aim of this note is two-fold. On one hand, we prove the pointwise control by sparse operators of bilinear Calder\'on--Zygmund operators with Dini-cotinuous kernels taking care of the precise constants.

On the other hand, we prove three different mixed bounds for a general class of bilinear dyadic positive operators using the parallel stopping cubes technique. The first bound (see Theorem~\ref{Thm:1}) follows the spirit in the work of the third author and W. Sun \cite{LS}, combining a product of the $A_{\vec{P}}$ and $A_{\infty}$ linear constants of the weights involved. The other two mixed weighted bounds combine the multiple $A_{\vec{P}}$ constant with natural extensions of the linear Hrus\v{c}\v{e}v and Fujii-Wilson $A_{\infty}$ constants (Theorems~\ref{Thm:2} and \ref{Thm:3}).

As a consequence, we are able to extend these weighted bounds to multilinear Calder\'on--Zygmund operators with Dini-continuous kernels and obtain new precise weighted bounds for their commutators with $BMO$ functions, square functions and Fourier multipliers in the multiple scenario.

For the sake of simplicity, throughout this paper we are mainly going to consider the bilinear case. Notwithstanding, a similar argument can be used to obtain the general multilinear case. Observe that in the section concerning commutators we give the general proof since it is more convenient.

The organization of this paper is as follows. In Section~\ref{Sect:2} we give some background and definitions which will be useful to prove our main results. In Section~\ref{Sect:4} we prove the pointwise control of multilinear Calder\'on--Zygmund operators by sparse operators whereas in Section~\ref{Sect:5} we obtain three quantitative bounds for a general class of positive dyadic operators. In Section~\ref{Sect:6}, main results in the previous section are applied to derive mixed weighted bounds for commutators of multilinear Calder\'on--Zygmund operators as well as for multilinear square functions and Fourier multipliers. Finally, in Section~\ref{Sect:Appendix} we prove quantitative versions of some classical boundedness results in the multilinear setting.

Throughout this paper, we will denote the average of a function $f$ over a cube $Q$ as
\begin{equation}
  \langle f \rangle_Q = \dashint_Q f = \dashint_Q f(x)dx = \frac{1}{|Q|}\int_Q f(x)dx,
\end{equation}
where $|Q|$ denotes the Lebesgue measure of $Q$. If $w$ is a weight, i.e. a measurable locally integrable function defined in $\Rn$ taking values in $(0,\infty)$ for almost every point, we will denote $w(Q):=\int_Q w(x)dx$ and $w \mathbf 1_Q(x):=w(x)\mathbf 1_Q(x)$. We will use the notation $A\lesssim B$ to indicate that there is a constant $c$, independent of the weight constant, such that $A\leq c B$.

\section{Preliminaries}\label{Sect:2}

\subsection{$\omega$-bilinear Calder\'on--Zygmund operators}
We say that $T$ is a $\omega$-bilinear Calder\'on--Zygmund operator if it is a bilinear operator originally defined on the product of Schwartz spaces and taking values into the space of tempered distributions,
\begin{equation}
  T:\mathcal{S}(\Rn)\times\mathcal{S}(\Rn)\to \mathcal{S}'(\Rn),
\end{equation}
and for some $1\leq q_1,q_2<\infty$ it extends to a bounded bilinear operator from $L^{q_1}\times L^{q_2}$ to $L^q$, where $1/q_1+1/q_2=1/q$, and if there exists a function $K$, defined off the diagonal $x=y=z$ in $(\Rn)^3$, satisfying
\begin{equation}
  T(f_1,f_2)(x)= \iint_{(\Rn)^2} K(x,y,z)f_1(y)f_2(z)dydz,
\end{equation}
for all $x\notin \supp{f_1}\cap\supp{f_2}$. The kernel $K$ must also satisfy, for some constants $C_K>0$ and $\tau\in(0,1)$, the following size condition
\begin{equation}\label{eq:size}
  |K(x,y,z)|\leq\frac{C_K}{(|x-y|+|x-z|)^{2n}},
\end{equation}
and, the smoothness estimate
\begin{align*}\label{eq:smoothness}
    |K(x&+h,y,z)-K(x,y,z)| +   |K(x,y+h,z)-K(x,y,z)| \\   &+|K(x,y,z+h)-K(x,y,z)| \\
    &\leq \frac{1}{(|x-y|+|x-z|)^{2n}} \omega\left( \frac{|h|}{|x-y|+|x-z|}\right),
\end{align*}
whenever $|h|\leq \tau\max{(|x-y|,|x-z|)}$.

If $\omega:[0,\infty)\to[0,\infty)$ is a modulus of continuity (i.e. it is increasing, subadditive ($\omega(t+s)\leq \omega(t)+\omega(s)$) and $\omega(0)=0$), the kernel $K$ is said to be a log-Dini-continuous kernel if $\omega$ satisfies the following condition
\begin{equation}\label{eq:logDini}
||\omega||_{\textup{log-Dini}}:=\int_0^1 \omega(t)\left(1+\log\left(\frac{1}{t}\right)\right)\frac{dt}{t} < \infty.
\end{equation}

We are mostly interested in the weaker case when $K$ is a $Dini(a)$-conti-nuous kernel. Namely, when $\omega$ satisfies the following condition:
\begin{equation}\label{eq:Dini}
  ||\omega||_{\textup{Dini(a)}}:= \int_0^1 \omega^a(t)\frac{dt}{t}<\infty.
\end{equation}

In the case $a=1$, we will denote $||\omega||_{\textup{Dini(a)}}$ simply as $||\omega||_{\textup{Dini}}$.

Given a bilinear Calder\'on-Zygmund operator $T$, the maximal truncation of $T$ is defined as the operator $T_{\sharp}$ given by
\begin{equation}\label{eq:MaxTru}
  T_{\sharp}(f_1,f_2)(x)=\sup_{\varepsilon>0} \left| T_{\varepsilon}(f_1,f_2)(x) \right|,
\end{equation}
where $T_{\varepsilon}$ is the $\varepsilon$-truncation of $T$
\begin{equation}
  T_{\varepsilon}(f_1,f_2)(x)=\int_{|x-y|^2+|x-z|^2>\varepsilon^2} K(x,y,z)f_1(y)f_2(z)dydz.
\end{equation}

\subsection{Dyadic cubes, adjacent systems and sparse operators}
The standard system of dyadic cubes in $\mathbb{R}^n$ is the collection $\mathcal{D}$,
\begin{equation}
  \mathcal{D}:=\{2^{-k}([0,1)^n + m): k\in\mathbb{Z}, m\in\mathbb{Z}^n\},
\end{equation}
consisting of simple half-open cubes of different length scales with sides parallel to the coordinate axes. These cubes satisfy the following properties:
\begin{enumerate}
  \item for any $Q\in\mathcal{D}$, the sidelength $\ell(Q)$ is of the form $2^k$, $k\in\mathbb{Z}$.\label{eq:dyadic1}
  \item $Q\cap R\in\{Q,R,\emptyset\}$, for any $Q,R\in\mathcal{D}$.\label{eq:dyadic2}
  \item the cubes of fixed sidelength $2^k$ form a partition of $\Rn$.\label{eq:dyadic3}
\end{enumerate}
Since given a ball $B(x,r)$, there does not always exist a cube $Q\in\mathcal{D}$ such that $B(x,r)\subset Q$ and $\ell(Q)\approx r$, a finite number of adjacent dyadic systems $\mathcal{D}^{u}$ can be used to overcome this problem. More precisely, these dyadic systems are the following
\begin{equation}
  \mathcal{D}^{u}:=\{2^{-k}([0,1)^u + m + (-1)^k\tfrac{1}{3}u): k\in\mathbb{Z}, m\in\mathbb{Z}^n\}, \hspace{1em} u\in\{0,1,2\}^n.
\end{equation}
The next two lemmas will be quite useful in the following. The first result can be found in \cite[Lemma 2.5]{HLP} in an stronger version whereas the second result is in \cite{HRT}.
\begin{Lemma}\label{lem:aprox}
  For any ball $B:=B(x,r)\subset\Rn$, there exists a cube $Q_B\in\mathcal{D}^u$ for some $u\in\{0,1,2\}^n$ such that $B\subset Q_B$ and $6r<\ell(Q_B)<12r$.
\end{Lemma}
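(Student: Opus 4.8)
The plan is to first pin down a single scale and then exploit the three available translates in each coordinate to slide the ball into one cube of an adjacent system. Since $12r/6r=2$, there is $k\in\mathbb{Z}$ with $6r<2^{-k}<12r$ (in the degenerate case where $6r$ is an integer power of $2$ one instead takes $2^{-k}=12r$, which changes nothing below). Set $\ell:=2^{-k}$. Every cube $Q_B$ produced below will have $\ell(Q_B)=\ell$, so the size restriction $6r<\ell(Q_B)<12r$ is automatic, and it remains to find $u\in\{0,1,2\}^n$ and a cube of $\mathcal{D}^u$ of sidelength $\ell$ containing $B=B(x,r)$.

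Because a cube of $\mathcal{D}^u$ is a product of intervals and the shift parameter acts coordinate by coordinate, I would argue one coordinate at a time. Fix $1\le j\le n$. The intervals occurring as the $j$-th edge of a sidelength-$\ell$ cube of $\mathcal{D}^u$ are exactly $[\,\ell m+\sigma_{u_j},\ \ell(m+1)+\sigma_{u_j}\,)$ with $m\in\mathbb{Z}$, where $\sigma_{u_j}:=(-1)^k\ell u_j/3$; modulo $\ell$ the three possible offsets $\sigma_0,\sigma_1,\sigma_2$ form the set $\{0,\ell/3,2\ell/3\}$ in $\mathbb{R}/\ell\mathbb{Z}$ regardless of the parity of $k$, and these three residues are pairwise separated by $\ell/3>2r$. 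For $B$ to fit into such an interval we need $x_j$ at distance at least $r$ from both of its endpoints, i.e.\ $x_j$ must avoid the $r$-neighbourhood of the lattice $\sigma_{u_j}+\ell\mathbb{Z}$. Since the three lattices (for $u_j=0,1,2$) are $\ell/3>2r$ apart, their $r$-neighbourhoods are pairwise disjoint, so $x_j$ lies in at most one of them; hence at least one $u_j\in\{0,1,2\}$ works, and for that choice I pick the integer $m_j$ with $x_j\in[\,\ell m_j+\sigma_{u_j}+r,\ \ell(m_j+1)+\sigma_{u_j}-r\,]$.

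Carrying this out for every $j=1,\dots,n$ yields $u=(u_1,\dots,u_n)\in\{0,1,2\}^n$ and $m=(m_1,\dots,m_n)\in\mathbb{Z}^n$; the corresponding cube $Q_B\in\mathcal{D}^u$ has sidelength $\ell$ and, in each coordinate, keeps $x_j$ at distance at least $r$ from both edges, so $B(x,r)\subset\prod_{j=1}^n[x_j-r,x_j+r]\subset Q_B$, while $6r<\ell(Q_B)=\ell<12r$. The only genuinely substantive point is the scale selection together with the ensuing separation estimate $\ell/3>2r$, which is precisely what makes the three $r$-neighbourhoods disjoint and hence forces a good choice of $u_j$ to exist; apart from bookkeeping the sign $(-1)^k$ and being slightly careful with the half-open endpoints, I do not expect a real obstacle.
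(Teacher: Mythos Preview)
Your argument is correct and self-contained. The paper, however, does not give a proof of this lemma at all: it simply records the statement and cites \cite[Lemma~2.5]{HLP} (in fact a stronger version) as its source. So there is no ``paper's proof'' to compare against; you have supplied what the paper outsources.

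Your coordinate-by-coordinate selection of $u_j$, driven by the separation estimate $\ell/3>2r$, is exactly the standard mechanism behind the one-third trick, and your reduction of the three offsets $\sigma_{u_j}=(-1)^k\ell u_j/3$ to the residue set $\{0,\ell/3,2\ell/3\}$ modulo $\ell$ is correct regardless of the parity of $k$. One small remark: the strict inequalities $6r<\ell(Q_B)<12r$ are impossible when $6r$ is itself a power of $2$, since then no dyadic sidelength lies strictly between $6r$ and $12r$. You noticed this and took $\ell=12r$ in that edge case, which still gives $\ell/3=4r>2r$ and hence the containment $B\subset Q_B$; only the strict upper bound fails. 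This is a defect in the lemma as stated rather than in your argument, and in every application a non-strict inequality on one side is all that is used.
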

Observe that, as a consequence of  \cite[Lemma 2.5]{HLP}, the collection $\mathcal{D}_0:=\cup_{u\in\{0,1,2\}^n}\mathcal{D}^u$ can be seen as a countable approximation of the collection of all balls in $\Rn$. This family satisfies \eqref{eq:dyadic1} and \eqref{eq:dyadic3} listed above, but it satisfies \eqref{eq:dyadic2} only in various weaker forms. We slightly abuse of the common terminology and say that $Q$ is a dyadic cube if $Q\in\mathcal{D}_0$.
\begin{Lemma}\label{lem:dyadicCoveringMod}
  If $Q_0\in\cup_{u\in\{0,1,2\}^n}\mathcal{D}^u$, then for any ball $B:=B(x,r)\subset Q_0$ there exists a cube $Q_B\in\cup_{u\in\{0,1,2\}^n}\mathcal{D}^u$ such that $B\subset Q_B\subseteq Q_0$ and $\ell(Q_B)\leq 12r$.
\end{Lemma}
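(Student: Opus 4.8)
The plan is to dispose of a trivial case and then build $Q_B$ one coordinate at a time at a single well-chosen dyadic scale. First, if $\ell(Q_0)\le 12r$ there is nothing to prove: take $Q_B=Q_0$, which satisfies $B\subset Q_B\subseteq Q_0$ and $\ell(Q_B)\le 12r$. So assume $\ell(Q_0)>12r$. Note that one cannot merely quote Lemma~\ref{lem:aprox}, since the cube it produces (of sidelength in $(6r,12r)$) may belong to a different adjacent system $\mathcal D^u$ than $Q_0$ and may protrude from $Q_0$ when $B$ is near $\partial Q_0$; the content of the present lemma is precisely to enforce $Q_B\subseteq Q_0$ as well.

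Fix $s$ to be the largest power of $2$ with $s\le 12r$; then automatically $6r<s\le 12r$, and since $\ell(Q_0)$ is itself a power of $2$ exceeding $12r$ we have $s\le \ell(Q_0)/2$, so in particular $\ell(Q_0)/s\in\mathbb Z$. The two structural facts I would isolate are: (i) at the fixed scale $s$, the left endpoints, in a given coordinate direction, of the intervals occurring in the cubes of $\bigcup_u\mathcal D^u$ form exactly the lattice $\tfrac{s}{3}\mathbb Z$ (the standard $s\mathbb Z$ together with its $s/3$- and $2s/3$-shifts arising from $u_i\in\{0,1,2\}$), and since the components of the parameters $m,u$ are independent across coordinates, a cube of $\bigcup_u\mathcal D^u$ of sidelength $s$ is precisely any box $\prod_i I_i$ with each $I_i$ an interval of length $s$ whose left endpoint lies in $\tfrac{s}{3}\mathbb Z$; (ii) writing $Q_0=\prod_i[a_i,a_i+\ell(Q_0))$, each corner coordinate $a_i$ is an integer multiple of $\ell(Q_0)/3$, hence lies in $\tfrac{s}{3}\mathbb Z$ (because $\ell(Q_0)/s\in\mathbb Z$), and likewise $a_i+\ell(Q_0)-s\in\tfrac{s}{3}\mathbb Z$.

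Now construct $Q_B=\prod_i I_i$ coordinate by coordinate. From $B=B(x,r)\subset Q_0$ we have $[x_i-r,x_i+r]\subseteq[a_i,a_i+\ell(Q_0))$ for every $i$. If $x_i\le a_i+s-r$ (``$x_i$ near the lower face''), set $I_i=[a_i,a_i+s)$; if $x_i\ge a_i+\ell(Q_0)-s+r$ (``near the upper face''), set $I_i=[a_i+\ell(Q_0)-s,\,a_i+\ell(Q_0))$; in both cases $I_i$ is a legitimate grid interval by (i)--(ii), contains $[x_i-r,x_i+r]$, and lies in $[a_i,a_i+\ell(Q_0))$. For the remaining (``interior'') coordinates one has $a_i<x_i+r-s$ and $x_i-r<a_i+\ell(Q_0)-s$, so an admissible left endpoint $\alpha_i$ need only satisfy $\alpha_i\in[x_i+r-s,\,x_i-r]$, and this interval has length $s-2r>s/3$ (using $s>6r>3r$) and is contained in $[a_i,\,a_i+\ell(Q_0)-s]$; since $\tfrac{s}{3}\mathbb Z$ has gaps $s/3$, such an $\alpha_i\in\tfrac{s}{3}\mathbb Z$ exists, and $I_i:=[\alpha_i,\alpha_i+s)$ works. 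Then $Q_B=\prod_i I_i\in\bigcup_u\mathcal D^u$, $\ell(Q_B)=s\le 12r$, $B\subset\prod_i(x_i-r,x_i+r)\subseteq Q_B$, and $Q_B\subseteq\prod_i[a_i,a_i+\ell(Q_0))=Q_0$.

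The only point requiring genuine care — and the one I would flag as the crux — is the verification in the interior case that $[x_i+r-s,\,x_i-r]$ is a nonempty subinterval of $[a_i,\,a_i+\ell(Q_0)-s]$ of length exceeding $s/3$; this is exactly where the choices $6r<s\le 12r$ and $s\le\ell(Q_0)/2$, together with $[x_i-r,x_i+r]\subseteq[a_i,a_i+\ell(Q_0))$, are consumed. Everything else is elementary bookkeeping with the definition of the systems $\mathcal D^u$.
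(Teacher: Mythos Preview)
Your argument is correct. The paper does not give its own proof of this lemma; it simply cites \cite{HRT} for the result, so there is no in-paper argument to compare against. Your coordinate-by-coordinate construction, exploiting that at a fixed dyadic scale $s$ the left endpoints of intervals arising from $\bigcup_u\mathcal D^u$ are exactly $\tfrac{s}{3}\mathbb Z$ and that this lattice is fine enough (since $s-2r>s/3$) to place each $I_i$ inside the $i$-th edge of $Q_0$ while covering $(x_i-r,x_i+r)$, is a clean and self-contained verification. One cosmetic point: you write $[x_i-r,x_i+r]\subseteq[a_i,a_i+\ell(Q_0))$, which can fail at the right endpoint; what $B\subset Q_0$ actually gives is $x_i-r\ge a_i$ and $x_i+r\le a_i+\ell(Q_0)$, and since $B$ is open this is all that is needed for $B\subset Q_B$. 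This does not affect any step of your proof.
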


\subsection{Multiple weights}

Along this section we recall some basic concepts related to some constants
involved in the multiple theory of weights.

\par First, let us define the central object in the multiple weight theory introduced in \cite{LOPTT}. Given $\vec f=(f_1,f_2)$, we define the multilinear maximal operator $\mathcal M$ by
$$\mathcal M(\vec f\,)(x)=\sup_{Q\ni x}\prod_{i=1}^m\frac{1}{|Q|}\int_Q|f_i(y_i)|dy_i,$$
where the supremum is taken over all cubes containing $x$.

Next, let us recall some useful definitions of the basic multiple weight constants that we are using throughout this paper. Consider numbers $1<p_1\ldots,p_m<\infty$ and $p$ such that $\frac{1}{p}=\frac{1}{p_1}+\ldots+\frac{1}{p_m}$ and denote $\vec{P}=(p_1,\ldots,p_m)$. Now define
\begin{equation}
[w,\vec\sigma]_{A_{\vec P}} := \sup_Q \langle w \rangle_Q \prod_{i=1}^m \langle \sigma_i \rangle_Q^{\frac{p}{p_i'}}.
\end{equation}
Notice that this definition is more general than that presented in \cite{LOPTT}, since when $\sigma_i=w_i^{1-p_i'}$, $i=1,\ldots,m$, and $w=\nu_{\vec w}$ we recover the $A_{\vec{P}}$ condition in \eqref{eq:AP_multiple} if $[w,\vec\sigma]_{A_{\vec P}}<\infty$.

In \cite{CD}, Chen and the first author introduced the following multilinear analogue of the $A_\infty$ constant, which was defined by Fujii in \cite{Fu} and later rediscovered by J.M Wilson \cite{Wil}. We say that $\overrightarrow w$ satisfies the $W_{\overrightarrow P}^\infty$ condition if
    \begin{equation}\label{eq:Fujii_constant}
      [\overrightarrow w]_{W_{\overrightarrow P}^\infty}=\sup_Q \Big(\int_Q\prod^m_{i=1}M(w_i \mathbf 1_Q)^{\frac{p}{p_i}}dx\Big)\Big(\int_Q\prod^m_{i=1}w_i^{\frac{p}{p_i}} dx\Big)^{-1}<\infty.
    \end{equation}
We can also define a more natural multilinear $A_{\infty}$ constant extending the classical Hruscev $A_{\infty}$ constant in \cite{Hru} as follows. We say that $\overrightarrow w$ satisfies the $H_{\vec{P}}^{\infty}$ condition if
\begin{equation}\label{eq:Hruscev_constant}
[\vec w]_{H_{\vec P}^\infty}:= \sup_{Q} \prod_{i=1}^m \langle w_i\rangle_Q^{\frac p{p_i}} \exp\Big( \dashint_Q \log w_i^{-1} \Big)^{\frac p{p_i}}.
\end{equation}

\section{Domination theorem for bilinear CZOs}\label{Sect:4}

In this section we will prove an extension of the domination theorem due to M. Lacey \cite{Lac} for bilinear Calder\'on--Zygmund operators following the scheme of proof in \cite{HRT} to track the precise constants.

\subsection{Some auxiliar operators and a related lemma}

Let $T$ be a bilinear Calder\'on--Zygmund operator with Dini-continuous kernel. For every cube $P\subset\Rn$, we defined the $P$-localized maximal truncation of $T$ as the operator

\begin{equation}
  T_{\sharp,P}(f_1,f_2)(x):= \sup_{0<\varepsilon<\delta<\tfrac{1}{2}dist(x,\partial P)} |T_{\varepsilon,\delta}(f_1,f_2)(x)| 1_P(x),
\end{equation}
where $T_{\varepsilon,\delta}$ is defined as follows
\begin{equation}
  T_{\varepsilon,\delta}(f_1,f_2)(x):= \iint_{\varepsilon^2<|x-y|^2+|x-z|^2<\delta^2} K(x,y,z)f_1(y)f_2(z)dzdy.
\end{equation}

We also need to define a truncated centered bilinear maximal function $\mathcal{M}_{\varepsilon,\delta}^c$ in the following way,
\begin{equation}
  \mathcal{M}_{\varepsilon,\delta}^c (f_1,f_2)(x):= \sup_{\varepsilon<r<\delta}\prod_{i=1}^2 \langle|f_i|\rangle_{B(x,r)}.
\end{equation}
We have the following relationship between the truncations $T_{\varepsilon,\delta}$ and $\mathcal{M}_{\varepsilon,\delta}^c$.
\begin{Lemma}\label{lem:truncatedMO}
  Suppose that $|x-x'|\leq \tfrac{1}{4}\varepsilon$. Then

  \begin{equation}\label{eq:approx}
    |T_{\varepsilon,\delta}(f_1,f_2)(x)-T_{\varepsilon,\delta}(f_1,f_2)(x')| \leq c_n (C_K+||\omega||_{\textup{Dini}})\mathcal{M}_{\varepsilon,2\delta}^c(f_1,f_2)(x).
  \end{equation}
\end{Lemma}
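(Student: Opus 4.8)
The plan is to estimate the difference $T_{\varepsilon,\delta}(f_1,f_2)(x)-T_{\varepsilon,\delta}(f_1,f_2)(x')$ by splitting the domain of integration into an \emph{annular error region} near the truncation radii and a \emph{main region} where the smoothness estimate applies, exactly as in the linear argument of \cite{HRT} but keeping track of the bilinear structure. Writing
\begin{align*}
  T_{\varepsilon,\delta}(f_1,f_2)(x)-T_{\varepsilon,\delta}(f_1,f_2)(x')
  &= \iint_{E_{\varepsilon,\delta}(x)} K(x,y,z)f_1(y)f_2(z)\,dz\,dy \\
  &\quad - \iint_{E_{\varepsilon,\delta}(x')} K(x',y,z)f_1(y)f_2(z)\,dz\,dy,
\end{align*}
where $E_{\varepsilon,\delta}(x)=\{(y,z):\varepsilon^2<|x-y|^2+|x-z|^2<\delta^2\}$, I would first observe that since $|x-x'|\le\tfrac14\varepsilon$ the two regions $E_{\varepsilon,\delta}(x)$ and $E_{\varepsilon,\delta}(x')$ are both contained in $\{(y,z):|x-y|^2+|x-z|^2\lesssim\delta^2,\ |x-y|+|x-z|\gtrsim\varepsilon\}$, and their symmetric difference is contained in the union of two thin shells: one near radius $\varepsilon$ (where $\varepsilon/2\lesssim|x-y|+|x-z|\lesssim 2\varepsilon$) and one near radius $\delta$ (where $\delta\lesssim|x-y|+|x-z|\lesssim 2\delta$).

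Second, on the symmetric difference I would use the size estimate \eqref{eq:size}: on each shell $|K|\le C_K/(|x-y|+|x-z|)^{2n}$, and the $2n$-dimensional annulus $\{|x-y|+|x-z|\sim R\}$ in $(\Rn)^2$ has measure $\sim R^{2n}$, so integrating $|f_1(y)f_2(z)|$ against $C_K/(|x-y|+|x-z|)^{2n}$ over such a shell is bounded by $c_n C_K \langle|f_1|\rangle_{B(x,cR)}\langle|f_2|\rangle_{B(x,cR)}$ after a dyadic decomposition of the shell and Hölder-free crude bounds $|x-y|\le cR$, $|x-z|\le cR$. Taking $R\sim\varepsilon$ and $R\sim\delta$ and recalling that $\mathcal{M}_{\varepsilon,2\delta}^c$ is a supremum over radii in $(\varepsilon,2\delta)$, these two contributions are each bounded by $c_n C_K\,\mathcal{M}_{\varepsilon,2\delta}^c(f_1,f_2)(x)$; the constant $2\delta$ (rather than $\delta$) is exactly what accommodates the outer shell which can reach radius up to $\sim\delta$ but, after enlarging by the $c$, up to $2\delta$.

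Third, on the common main region $E_{\varepsilon,\delta}(x)\cap E_{\varepsilon,\delta}(x')$ I would replace $K(x,y,z)-K(x',y,z)$ by the smoothness bound. Since $|x-x'|\le\tfrac14\varepsilon\le\tfrac14(|x-y|+|x-z|)$ there, the hypothesis $|h|\le\tau\max(|x-y|,|x-z|)$ in the smoothness estimate holds (possibly after noting $\max\gtrsim$ sum up to a dimensional factor), so
\[
  |K(x,y,z)-K(x',y,z)|\le \frac{1}{(|x-y|+|x-z|)^{2n}}\,\omega\!\Big(\frac{|x-x'|}{|x-y|+|x-z|}\Big).
\]
Decomposing the region dyadically in $R=|x-y|+|x-z|$ from $\sim\varepsilon$ up to $\sim\delta$, the $R$-shell contributes at most $c_n\,\omega(\varepsilon/R)\,\langle|f_1|\rangle_{B(x,cR)}\langle|f_2|\rangle_{B(x,cR)}\le c_n\,\omega(\varepsilon/R)\,\mathcal{M}_{\varepsilon,2\delta}^c(f_1,f_2)(x)$, and summing over the dyadic scales $R\in\{2^k\varepsilon: k\ge 0\}$ produces the factor $\sum_{k\ge0}\omega(2^{-k})\le c\,\|\omega\|_{\textup{Dini}}$ by comparing the sum to the integral $\int_0^1\omega(t)\,dt/t$ via monotonicity of $\omega$. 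Combining the three pieces gives \eqref{eq:approx} with constant $c_n(C_K+\|\omega\|_{\textup{Dini}})$.

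The main obstacle I anticipate is the bookkeeping of the geometry of the two $2n$-dimensional truncation regions in $(\Rn)^2$: one must be careful that the annulus $\{|x-y|^2+|x-z|^2\sim R^2\}$ and the region $\{|x-y|+|x-z|\sim R\}$ are comparable up to dimensional constants, that the symmetric difference really does live in the two thin shells once $|x-x'|\le\tfrac14\varepsilon$, and that after enlarging balls by dimensional factors the supremum defining $\mathcal{M}_{\varepsilon,2\delta}^c(f_1,f_2)(x)$ (centered at $x$, radii in $(\varepsilon,2\delta)$) still controls all the local averages that appear — in particular the outer shell forces the upper radius $2\delta$ rather than $\delta$. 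Everything else is a routine dyadic summation, with the Dini sum replacing the single modulus-of-continuity evaluation that appears in the off-support kernel estimates.
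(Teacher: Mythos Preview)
Your proposal is correct and follows essentially the same approach as the paper. The only cosmetic difference is in the algebraic organization of the splitting: the paper writes
\[
T_{\varepsilon,\delta}(x)-T_{\varepsilon,\delta}(x')
=\underbrace{\iint_{E_{\varepsilon,\delta}(x)}\bigl(K(x,\cdot)-K(x',\cdot)\bigr)}_{I}
+\underbrace{\Bigl(\iint_{E_{\varepsilon,\delta}(x)}-\iint_{E_{\varepsilon,\delta}(x')}\Bigr)K(x',\cdot)}_{II},
\]
handling the smoothness term $I$ over the full annulus $E_{\varepsilon,\delta}(x)$ and then splitting $II=II_\varepsilon-II_\delta$ into the two boundary shells with the \emph{same} kernel $K(x',\cdot)$, whereas you integrate the kernel difference only over $E_{\varepsilon,\delta}(x)\cap E_{\varepsilon,\delta}(x')$ and treat the two halves of the symmetric difference with the kernels $K(x,\cdot)$ and $K(x',\cdot)$ respectively. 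Both decompositions lead to the same dyadic Dini sum for the smoothness part and the same size-condition bound $c_nC_K\,\mathcal{M}^c_{\varepsilon,2\delta}(f_1,f_2)(x)$ for the shells; the geometric bookkeeping you flag (comparability of $\ell^1$ and $\ell^2$ annuli in $\mathbb{R}^{2n}$, and the need for the upper radius $2\delta$) is exactly what the paper verifies explicitly in its estimates of $II_\varepsilon$ and $II_\delta$.
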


\begin{proof}

 First observe that

 \begin{align*}
   |T_{\varepsilon,\delta}&(f_1,f_2)(x)-T_{\varepsilon,\delta}(f_1,f_2)(x')|  \\
   &=\left| \iint_{\varepsilon^2<|x-y|^2+|x-z|^2<\delta^2} K(x,y,z)f_1(y)f_2(z) dzdy \right.   \\&-  \left.\iint_{\varepsilon^2<|x'-y|^2+|x'-z|^2<\delta^2} K(x',y,z)f_1(y)f_2(z)dzdy\right| \\
      &= \left| \iint_{\varepsilon^2<|x-y|^2+|x-z|^2<\delta^2} (K(x,y,z)-K(x',y,z))f_1(y)f_2(z)dzdy \right.   \\
       &+ \left(\iint_{\varepsilon^2<|x-y|^2+|x-z|^2<\delta^2} K(x',y,z)f_1(y)f_2(z)\right. dzdy \\ &\left.\left.-\iint_{\varepsilon^2<|x'-y|^2+|x'-z|^2<\delta^2} K(x',y,z)f_1(y)f_2(z) dzdy\right) \right|\\
       &:=|I + II|.
  \end{align*}
  For the first term, using the smoothness of the kernel and the properties of the modulus of continuity $\omega$, we get

\begin{align*}
 |I| &\lesssim  \iint_{\varepsilon^2<|x-y|^2+|x-z|^2<\delta^2}  \omega\left(\frac{ |x-x'| }{|x-y|+|x-z|}\right)\frac{|f_1(y)||f_2(z)|}{(|x-y|+|x-z|)^{2n}} dydz\\
 &\lesssim \sum_{k:\varepsilon^2\leq (2^k\varepsilon)^2<\delta^2}\iint_{(2^{k}\varepsilon)^2<|x-y|^2+|x-z|^2\le (2^{k+1}\varepsilon)^2}\hspace{-0.45cm} \omega\left(\frac{|x-x'|}{2^{k}\varepsilon}\right)\frac{|f_1(y)||f_2(z)|}{(2^k \varepsilon)^{2n}}dydz \\
 &\leq \sum_{k=0}^{\infty} \omega\left(\frac{|x-x'|}{2^k\varepsilon}\right)\iint_{B(x,2^{k+1}\varepsilon)} \frac{|f_1(y)||f_2(z)|}{(2^k \varepsilon)^{2n}}dydz \\
 &\leq c_n' \mathcal{M}^c_{\varepsilon,2\delta}(f_1,f_2)(x) \sum_{k=0}^{\infty} \int_{|x-x'|/2^k\varepsilon}^{|x-x'|/2^{k-1}\varepsilon} \omega(t)\frac{dt}{t} \\
 &\leq c_n' \mathcal{M}^c_{\varepsilon,2\delta}(f_1,f_2)(x)  \int_{0}^1 \omega(t)\frac{dt}{t}.
\end{align*}

For the second term, we make a similar decomposition as in \cite{HRT}, namely

\begin{align*}
  II = II_{\varepsilon}-II_{\delta},
\end{align*}
where
\begin{align*}
  II_r:&= \left( \iint_{|x-y|^2+|x-z|^2 > r^2} - \iint_{|x'-y|^2+|x'-z|^2 > r^2} K(x',y,z)f_1(y)f_2(z) dz dy\right) \\
  &= \iint_{|x-y|^2+|x-z|^2 > r^2\geq |x'-y|^2+|x'-z|^2 } K(x',y,z) f_1(y)f_2(z) dz dy \\
  &- \iint_{|x'-y|^2+|x'-z|^2 > r^2\geq |x-y|^2+|x-z|^2 } K(x',y,z) f_1(y)f_2(z) dz dy.
\end{align*}

Since $|x-x'|\leq \frac{\varepsilon}{4}\leq\frac{r}{4}$, for the first integral, $|x-y|+|x-z|\leq |x'-y|+|x'-z|+2|x-x'| \leq \sqrt{2}(|x'-y|^2+|x'-z|^2)^{1/2} +\frac{\varepsilon}{2}\leq \sqrt{2}r +\frac{r}{2}\leq 2r$, and $|x'-y|+|x'-z|\ge |x-y|+|x-z|-2|x-x'|\ge (|x-y|^2+|x-z|^2)^{\frac 12}-2|x-x'|\ge \frac r 2$, we have
\begin{align*}
&\Big| \iint_{|x-y|^2+|x-z|^2 > r^2\geq |x'-y|^2+|x'-z|^2 } K(x',y,z) f_1(y)f_2(z) dz dy\Big|\\
&\le \iint_{|x-y|+|x-z|\le 2r} \frac {C_K}{(r/2)^{2n}} |f_1(y)||f_2(z)|dydz\\
&\le c_n C_K \langle |f_1|\rangle_{B(x,2r)}\langle |f_2|\rangle_{B(x, 2r)}\le c_n C_K \mathcal M_{\varepsilon, 2\delta}^c (f_1,f_2)(x).
\end{align*}

For the second integral, we have $|x'-y|+|x'-z|\ge (|x'-y|^2+|x'-z|^2)^{\frac 12}\ge r$. Therefore,
\begin{align*}
&\Big|\iint_{|x'-y|^2+|x'-z|^2 > r^2\geq |x-y|^2+|x-z|^2 } K(x',y,z) f_1(y)f_2(z) dz dy\Big|\\
&\le \iint_{|x-y|^2+|x-z|^2\le r^2} \frac {C_K}{r^{2n}} |f_1(y)||f_2(z)|dydz\\
&\le c_n C_K \langle |f_1|\rangle_{B(x,r)}\langle |f_2|\rangle_{B(x, r)}\le c_n C_K \mathcal M_{\varepsilon, 2\delta}^c (f_1,f_2)(x).
\end{align*}
Consequently,
\[
II\le 4c_n C_K \mathcal M_{\varepsilon, 2\delta}^c (f_1,f_2)(x).
\]
which shows the desired result.
\end{proof}
The following result is an extension of the pointwise domination of the maximal truncation of $T$ by a sum of sparse operators in the bilinear setting.
\begin{Theorem}\label{Thm:domination}
Let $T$ be a bilinear Calder\'on--Zygmund operator with Dini continuous kernel. Then for any pair of compactly supported functions $f_1,f_2\in L^1(\Rn)$, there exist sparse collections $\mathcal{S}^{u}\subset\mathcal{D}^u$, $u=1,2,\ldots,3^n$, such that
\begin{equation}\label{ineq:pointwiseDomin}
  T_{\sharp}(f_1,f_2)(x) \leq c_n (||T||_{L^{q_1}\times L^{q_2}\to L^{q}} + C_K + ||\omega||_{\textup{Dini}}) \sum_{u=1}^{3^n} \mathcal{A}_{\mathcal{S}^u}(f_1,f_2)(x),
\end{equation}
for almost every $x\in\Rn$, where the constant $c_n$ depends only on the dimension and $||T||_{L^{q_1}\times L^{q_2}\to L^{q}}$ denotes the norm of the operator.
\end{Theorem}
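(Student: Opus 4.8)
The plan is to follow the Lerner--Nazarov / Lacey scheme as adapted in \cite{HRT}, but in the bilinear setting, carrying constants explicitly. First I would reduce to a \emph{local} statement: fix a cube $Q_0\in\mathcal{D}^u$ (for one of the finitely many adjacent dyadic systems from Lemma~\ref{lem:aprox}) with $3Q_0$ containing the supports of $f_1,f_2$, and show that on $Q_0$ one can build a sparse family $\mathcal{S}\subset\mathcal{D}^u$ with $Q_0\in\mathcal{S}$ such that $T_{\sharp}(f_1,f_2)\mathbf 1_{Q_0}$ is pointwise bounded by $c_n(\|T\|+C_K+\|\omega\|_{\textup{Dini}})\,\mathcal{A}_{\mathcal{S}}(f_1,f_2)$. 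Summing over the $3^n$ systems (using that an arbitrary ball is contained in some $Q_B\in\mathcal{D}^u$ with comparable sidelength, so every truncation scale is captured) then yields \eqref{ineq:pointwiseDomin}.

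The heart of the argument is the stopping-time construction inside $Q_0$. I would define the stopping children of a cube $P\in\mathcal{S}$ as the maximal dyadic $P'\subsetneq P$ (in the same grid) for which at least one of the following fails to hold with a large dimensional constant $C=C_n$:
\begin{itemize} \renewcommand{\labelitemi}{$\bullet$}
\item $\langle|f_i|\rangle_{3P'} \le C\,\langle|f_i|\rangle_{3P}$ for $i=1,2$;
\item $\operatorname*{ess\,sup}_{x\in P'} T_{\sharp,3P}(f_1\mathbf 1_{3P},f_2\mathbf 1_{3P})(x) \le C\,(\|T\|+C_K+\|\omega\|_{\textup{Dini}})\,\langle|f_1|\rangle_{3P}\langle|f_2|\rangle_{3P}$.
\end{itemize}
A Calder\'on--Zygmund / weak-$(1,1)$-type estimate for $T_{\sharp}$ (the quantitative multilinear weak endpoint bound, which the paper records in its appendix) together with the weak-type bound for $\mathcal{M}$ shows that the union of the stopping children has measure at most, say, $\tfrac12|P|$, which gives sparseness with $\gamma=1/2$. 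The key pointwise estimate is then: for $x\in P\in\mathcal{S}$ not in any stopping child, the contribution to $T_{\sharp}$ coming from the annulus ``between $P$ and its stopping children'' is controlled by $c_n(\|T\|+C_K+\|\omega\|_{\textup{Dini}})\langle|f_1|\rangle_{3P}\langle|f_2|\rangle_{3P}$. Here is precisely where Lemma~\ref{lem:truncatedMO} enters: it lets one pass from $T_{\varepsilon,\delta}(f_1,f_2)(x)$ to $T_{\varepsilon,\delta}(f_1,f_2)(\xi)$ for a nearby $\xi$ (chosen in a small subcube where the stopping condition still holds, so the truncated norm there is controlled), paying only a $\mathcal{M}^c_{\varepsilon,2\delta}(f_1,f_2)(x)\lesssim \langle|f_1|\rangle_{3P}\langle|f_2|\rangle_{3P}$ error; the $\|\omega\|_{\textup{Dini}}$ in that error is the reason only the Dini condition (not log-Dini) is needed. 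One also splits $f_i = f_i\mathbf 1_{3P} + f_i\mathbf 1_{(3P)^c}$: the inside part is handled by the stopping condition on $T_{\sharp,3P}$, and the outside/tail part is estimated directly using the size bound \eqref{eq:size} on $K$, which is summable over the dyadic scales and again gives $\lesssim C_K\,\langle|f_1|\rangle_{3P}\langle|f_2|\rangle_{3P}$. Telescoping these per-generation estimates down the sparse tree produces the sum $\sum_{P\in\mathcal{S}}\langle|f_1|\rangle_{3P}\langle|f_2|\rangle_{3P}\mathbf 1_P$, which, after replacing $3P$ by dyadic cubes in the enlarged (countably many) family $\mathcal{D}_0$ via Lemma~\ref{lem:dyadicCoveringMod}, is exactly a bounded sum of operators of the form $\mathcal{A}_{\mathcal{S}^u}$.

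The step I expect to be the main obstacle is the careful bookkeeping in the annular/tail estimate together with the use of Lemma~\ref{lem:truncatedMO}: one must ensure that for \emph{every} admissible pair $0<\varepsilon<\delta<\tfrac12\operatorname{dist}(x,\partial(3P))$ the truncation $T_{\varepsilon,\delta}(f_1\mathbf 1_{3P},f_2\mathbf 1_{3P})(x)$ can be compared, via a single application of Lemma~\ref{lem:truncatedMO}, to a value of $T_{\sharp,3P}$ at a good point, and that the scale $2\delta$ appearing in $\mathcal{M}^c_{\varepsilon,2\delta}$ is still controlled by the averages over $3P$ rather than a larger cube --- this forces the enlargement factor $3$ and a matching of geometry between the truncation radii and the dyadic sidelengths. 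The other delicate point is tracking that the dimensional constants in the stopping-time threshold $C_n$ are chosen large enough (depending only on the weak-$(1,1)$ constants of $\mathcal{M}$ and of $T_{\sharp}$, which are themselves dimensional up to the stated factor $\|T\|+C_K+\|\omega\|_{\textup{Dini}}$) so that the measure estimate $\le\tfrac12|P|$ holds, while keeping the final constant in \eqref{ineq:pointwiseDomin} of the claimed form. Everything else --- the reduction to a single cube, the passage to the $3^n$ adjacent systems, and the replacement of $3P$ by members of $\mathcal{D}_0$ --- is routine given the two lemmas already proved.
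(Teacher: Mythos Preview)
Your proposal is a plausible sparse-domination scheme, but it is \emph{not} the route the paper takes. The paper follows \cite{HRT} verbatim (it says so explicitly and only spells out the key Lemma~\ref{Lem:recursion}). That lemma does \emph{not} run a dyadic stopping-time on children of $P$, does \emph{not} use the average conditions $\langle|f_i|\rangle_{3P'}\le C\langle|f_i|\rangle_{3P}$, and does \emph{not} split $f_i=f_i\mathbf 1_{3P}+f_i\mathbf 1_{(3P)^c}$. Instead, working directly with the $Q_0$-localized truncation $T_{\sharp,Q_0}$, one defines $E_0=\{x\in Q_0: T_{\sharp,Q_0}(f_1,f_2)(x)>C_T^0\prod_j\langle|f_j|\rangle_{Q_0}\}$ and, for each $x\in E_0$, uses continuity of $(\varepsilon,\delta)\mapsto T_{\varepsilon,\delta}(f_1,f_2)(x)$ to select a \emph{maximal} radius $\sigma_x$ (and a $\tau_x$) with $|T_{\sigma_x,\tau_x}(f_1,f_2)(x)|\ge C_T^0\prod_j\langle|f_j|\rangle_{Q_0}$. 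This maximality immediately gives the pointwise recursion $T_{\sharp,Q_0}(x)\le \sup_{\varepsilon\le\delta\le\sigma_x}|T_{\varepsilon,\delta}|+C_T^0\prod_j\langle|f_j|\rangle_{Q_0}$; one then picks (via Lemma~\ref{lem:dyadicCoveringMod}) a dyadic cube $Q_x\supset B(x,2\sigma_x)$ with $\ell(Q_x)\lesssim\sigma_x$, so the first term is $\le T_{\sharp,Q_x}(x)$. The measure bound $\sum_i|Q_{x_i}|\le\varepsilon_n|Q_0|$ comes from splitting $E_0=E_1\cup E_2$ according to whether $\mathcal{M}^c_{\sigma_x,2\tau_x}(f_1,f_2)(x)\le C_T^1\prod_j\langle|f_j|\rangle_{Q_0}$: on $E_1$ Lemma~\ref{lem:truncatedMO} is used to \emph{spread} the largeness of $T_{\sigma_x,\tau_x}$ from the single point $x$ to the whole ball $B(x,\tfrac14\sigma_x)$, and then the weak $L^1\times L^1\to L^{1/2,\infty}$ bound for $T_\sharp$ controls the measure; on $E_2$ the weak bound for $\mathcal{M}$ does the job directly.

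Your scheme is closer in spirit to Lerner--Nazarov / Conde-Alonso--Rey. It can be pushed through, but note two things. First, you invoke Lemma~\ref{lem:truncatedMO} for a different purpose (moving $x$ to a ``good'' nearby $\xi$) than the paper does (propagating badness to a ball so the weak-type bound bites). Second, your recursion has to bridge $T_{\sharp,3P}$ and $T_{\sharp,3P'}$: since $T_{\sharp,3P}(x)$ allows $\delta$ up to $\tfrac12\operatorname{dist}(x,\partial(3P))$, which may exceed $\tfrac12\operatorname{dist}(x,\partial(3P'))$, the difference involves annular truncations that you must estimate separately (and in the bilinear case the split of $f_i$ produces three cross terms). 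The paper's device of choosing the cube \emph{after} the radii $\sigma_x,\tau_x$, rather than before, sidesteps this bookkeeping entirely; your anticipated ``main obstacle'' about matching truncation scales to cube sidelengths simply does not arise in their argument.
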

The proof of the previous theorem follows exactly the same scheme of proof of \cite[Thm. 2.4]{HRT} with slight modifications. Notwithstanding, since the key ingredient for the proof of this theorem is essentially the next lemma, we are only going to give the details of its proof here for the sake of completeness.
\begin{Lemma}\label{Lem:recursion}
Let $f_1,f_2$ be integrable functions. Then, for every $Q_0\in\mathcal{D}_0$, there exists a collection $\mathcal{D}(Q_0)$ of dyadic cubes $Q\subset Q_0$ such that the following three conditions hold:
\begin{enumerate}
\item\label{recursion:prop1} $\sum_{Q\in \mathcal{D}(Q_0)}|Q| \leq \varepsilon_n |Q_0|$.
\item\label{recursion:prop2} if $Q'\subset Q$, and $Q',Q\in\mathcal{D}(Q_0)$, then $Q'=Q$.
\item\label{recursion:prop3} we have
\begin{equation}
  T_{\sharp,Q_0}(f_1,f_2) \leq C_T^0 \prod_{j=1}^2\langle|f_j|\rangle_{Q_0}\mathbf{1}_{Q_0} + \max_{Q\in\mathcal{D}(Q_0)} T_{\sharp,Q}(f_1,f_2),
\end{equation}
where $C_T^0:=c_n(||T||_{L^{q_1}\times L^{q_2}\to L^{q}} + C_K + ||\omega||_{\textup{Dini}})$ and $\varepsilon_n\searrow 0$ as $c_n\nearrow \infty$.
\end{enumerate}
\end{Lemma}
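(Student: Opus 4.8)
The plan is to run a Calder\'on--Zygmund-type stopping-time argument inside the fixed cube $Q_0$, exactly as in \cite{HRT,Lac}, but with the \emph{bilinear} average $\prod_{j=1}^2\langle|f_j|\rangle$ playing the role of the single average. Concretely, I would first consider the dyadic subcubes $Q\subsetneq Q_0$ (with $Q\in\mathcal{D}_0$, so contained in the same adjacent system as $Q_0$ via Lemma~\ref{lem:dyadicCoveringMod}) and form the stopping set $\mathcal{D}(Q_0)$ of \emph{maximal} cubes $Q$ for which at least one of two bad events occurs: either the bilinear average grows, $\prod_{j=1}^2\langle|f_j|\rangle_{Q} > A \prod_{j=1}^2\langle|f_j|\rangle_{Q_0}$ for a large dimensional constant $A$, or the localized maximal truncation is large on a substantial portion of $Q$, say $|\{x\in Q: T_{\sharp,Q_0}(f_1,f_2)(x) > C_T^0\prod_{j=1}^2\langle|f_j|\rangle_{Q_0}\}| > \tfrac12|Q|$ (the analogue of the ``$E_{Q_0}$'' set in \cite{HRT}). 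Maximality gives property~\eqref{recursion:prop2} automatically, and a Carleson/packing estimate — using the weak-type $(q_1,q_2)$ bound for $T_{\sharp}$ from the quantitative multilinear Cotlar inequality, cf.\ the appendix, together with the maximal-function weak bound for $\mathcal{M}$ — gives $\sum_{Q\in\mathcal{D}(Q_0)}|Q|\le \varepsilon_n|Q_0|$, with $\varepsilon_n\to0$ as the threshold constants $c_n$ (hence $A$ and $C_T^0$) increase, which is property~\eqref{recursion:prop1}.

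For property~\eqref{recursion:prop3} I would fix $x\in Q_0$ and a truncation parameter with $0<\varepsilon<\delta<\tfrac12\mathrm{dist}(x,\partial Q_0)$ and split the region of integration of $T_{\varepsilon,\delta}(f_1,f_2)(x)$ into the part meeting the stopping cubes and the complement. If $x$ lies outside $\bigcup_{Q\in\mathcal{D}(Q_0)}Q$, then every scale $r\in(\varepsilon,\delta)$ has $B(x,r')$ not fully inside any stopping cube for the relevant $r'\approx r$, so along the chain of dyadic ancestors of $x$ between scale $\varepsilon$ and scale $\ell(Q_0)$ the bilinear averages are all $\lesssim A\prod_{j=1}^2\langle|f_j|\rangle_{Q_0}$; feeding this into the size estimate \eqref{eq:size} (summed geometrically over annuli) bounds $|T_{\varepsilon,\delta}(f_1,f_2)(x)|$ by $c_nC_K\prod_{j=1}^2\langle|f_j|\rangle_{Q_0}$, and then Lemma~\ref{lem:truncatedMO} transfers this bound from such a ``good'' point $x'$ (whose existence for a.e.\ $x$ uses that $|\mathcal{D}(Q_0)|$ does not cover too much, via property~\eqref{recursion:prop1} and a choice of $x'$ with $|x-x'|\le\varepsilon/4$) to the actual point $x$, at the cost of the extra $\mathcal{M}^c_{\varepsilon,2\delta}(f_1,f_2)(x)\le c_n A\prod_{j=1}^2\langle|f_j|\rangle_{Q_0}$ term. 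If instead $x\in Q\in\mathcal{D}(Q_0)$, then for $\varepsilon$ small the ball $B(x,\varepsilon)$ sits inside $Q$ and the truncation at $x$ is dominated by $T_{\sharp,Q}(f_1,f_2)(x)$, which is one of the terms in $\max_{Q\in\mathcal{D}(Q_0)}T_{\sharp,Q}$; for the remaining ``large'' $\varepsilon$ (comparable to $\ell(Q)$) one again lands in the good regime. Taking the supremum over $\varepsilon,\delta$ and multiplying by $\mathbf 1_{Q_0}$ yields \eqref{recursion:prop3} with $C_T^0=c_n(\|T\|_{L^{q_1}\times L^{q_2}\to L^q}+C_K+\|\omega\|_{\textup{Dini}})$, the CZ-norm contribution entering through the weak $(q_1,q_2)$ estimate used in the packing bound and through the first term's appearance of the centered average.

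The main obstacle, and the only place the bilinear structure genuinely changes the argument, is the \emph{geometry of the stopping region relative to the truncation*: in the linear case one compares $B(x,r)$ to a single dyadic cube, whereas here the natural objects are the two pieces $f_1(y)f_2(z)$ living on $(\Rn)^2$, and the size kernel involves $(|x-y|+|x-z|)^{-2n}$. I need to make sure that, on the complement of the stopping cubes, the \emph{product} $\langle|f_1|\rangle_{B(x,2^k\varepsilon)}\langle|f_2|\rangle_{B(x,2^k\varepsilon)}$ really is controlled by $A\prod_{j}\langle|f_j|\rangle_{Q_0}$ uniformly in the annulus index $k$ — this is where the stopping condition must be phrased in terms of the bilinear average and where one uses that, for $x\notin\bigcup\mathcal{D}(Q_0)$, the dyadic cube in $\mathcal{D}_0$ of sidelength $\approx 2^k\varepsilon$ containing $x$ is not a stopping cube. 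A careful bookkeeping of the passage between Euclidean balls $B(x,2^k\varepsilon)$ and the adjacent dyadic cubes of comparable sidelength (Lemma~\ref{lem:aprox} and Lemma~\ref{lem:dyadicCoveringMod}) is needed to align ``$B(x,r)\subset$ some stopping cube'' with ``$x$ belongs to a stopping cube at the appropriate scale.'' Once this dictionary is set up, the rest is a routine adaptation of \cite{HRT}: the geometric series over annuli converges because of \eqref{eq:size}, the transition from $x'$ to $x$ is Lemma~\ref{lem:truncatedMO}, and the constant tracking is immediate from the explicit bounds in that lemma and in the quantitative Cotlar inequality.
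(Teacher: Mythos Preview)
Your argument for property~\eqref{recursion:prop3} contains a genuine gap. When $x$ lies outside the stopping cubes, you claim that ``feeding this into the size estimate \eqref{eq:size} (summed geometrically over annuli) bounds $|T_{\varepsilon,\delta}(f_1,f_2)(x)|$ by $c_nC_K\prod_{j}\langle|f_j|\rangle_{Q_0}$.'' But the size condition alone gives, on each annulus $2^k\varepsilon<|x-y|+|x-z|\le 2^{k+1}\varepsilon$, a contribution $\sim C_K\langle|f_1|\rangle_{B(x,2^{k+1}\varepsilon)}\langle|f_2|\rangle_{B(x,2^{k+1}\varepsilon)}$. There is no decay in $k$: summing over $0\le k\lesssim\log(\delta/\varepsilon)$ yields a factor $\log(\delta/\varepsilon)$, not a convergent geometric series, and this is unbounded as $\varepsilon\to 0$. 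The kernel smoothness $\omega$ only controls \emph{differences} $K(x,\cdot)-K(x',\cdot)$, so it cannot rescue this step either. The same difficulty reappears in the case $x\in Q\in\mathcal D(Q_0)$ when $\delta$ exceeds $\tfrac12\mathrm{dist}(x,\partial Q)$: you fall back on the ``good regime,'' which as just explained does not give a finite bound.

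The paper avoids this entirely with a different mechanism (following \cite{HRT,Lac}): for each $x\in E_0=\{T_{\sharp,Q_0}>C_T^0\prod_j\langle|f_j|\rangle_{Q_0}\}$ one chooses, by continuity, the \emph{largest} radius $\sigma_x$ for which some $|T_{\sigma_x,\tau_x}(f_1,f_2)(x)|\ge C_T^0\prod_j\langle|f_j|\rangle_{Q_0}$. Maximality then forces $|T_{\sigma,\tau}(f_1,f_2)(x)|\le C_T^0\prod_j\langle|f_j|\rangle_{Q_0}$ for all $\sigma_x\le\sigma\le\tau$, so the supremum over all $(\varepsilon,\delta)$ splits as $\sup_{\delta\le\sigma_x}+\sup_{\sigma_x\le\varepsilon}$; the second piece is automatically $\le C_T^0\prod_j\langle|f_j|\rangle_{Q_0}$, and the first is $\le T_{\sharp,Q_x}$ once one picks a dyadic $Q_x\supset B(x,2\sigma_x)$ via Lemma~\ref{lem:dyadicCoveringMod}. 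No summation over annuli is ever attempted for $T_{\varepsilon,\delta}$ itself. The cubes $\mathcal D(Q_0)$ are then the maximal $Q_x$'s, and their total measure is controlled by splitting $E_0$ into a part where $\mathcal M^c_{\sigma_x,2\tau_x}$ is small (handled by Lemma~\ref{lem:truncatedMO} and the weak bound for $T_\sharp$) and its complement (handled by the weak bound for $\mathcal M$). Your stopping-time selection is not the issue; the missing ingredient is this $\sigma_x$-maximality trick, which is what actually produces the pointwise splitting in \eqref{recursion:prop3} without any uncontrolled dyadic sum.
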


\begin{proof}
We want to prove that for any constant $C_T^0 > 0$ we can cover the set $E_0$,
\begin{equation*}
  E_0 := \left\{x\in Q_0: T_{\sharp,Q_0}(f_1,f_2)(x)>C_T^0 \prod_{j=1}^2 \langle|f_j|\rangle_{Q_0} \right\},
\end{equation*}
with countably many cubes $Q_i \in \mathcal{D}_0$ that satisfying conditions \eqref{recursion:prop2} and \eqref{recursion:prop3} and if the constant
$C^T_0$ is of the form $c_n\big(||T||_{L^{q_1}\times L^{q_2}\to L^{q}}+C_K+||\omega||_{\textup{Dini}}\big)$, then the cubes also satisfy condition \eqref{recursion:prop1}.

Let $x \in E_0$. Since the function $(\varepsilon,\delta) \mapsto T_{\varepsilon,\delta}(f_1,f_2)(x)$ is continuous, we can choose such radii $0<\sigma_x<\tau_x\leq \frac{1}{2} \cdot dist(x,\partial Q_0)$ that
\begin{equation*}
  \abs{T_{\sigma_x,\tau_x}(f_1,f_2)(x)}\geq C_T^0\prod_{j=1}^2\ave{\abs{f_j}}_{Q_0}
\end{equation*}
and
\begin{equation*}
  \abs{T_{\sigma,\tau}(f_1,f_2)(x)}\leq C_T^0\prod_{i=1}^2\ave{\abs{f_i}}_{Q_0}\qquad\text{if}\quad \sigma_x\leq\sigma\leq \tau\leq \frac{1}{2} \cdot dist(x,Q_0).
\end{equation*}
For simplicity, we drop the conditions $\varepsilon>0$ and $\delta\leq \frac{1}{2} \cdot dist(x,Q_0)$ from the notation. Now the maximality of $\sigma_x$ implies the following:
\begin{align*}
  T_{\sharp,Q_0}(f_1,f_2)(x) &=\sup_{\varepsilon\leq\delta}\abs{T_{\varepsilon,\delta}(f_1,f_2)(x)} \\
                     &=\sup_{\varepsilon\leq\delta\leq \sigma_x}\abs{T_{\varepsilon,\delta}(f_1,f_2)(x)} \\&\vee\sup_{\sigma_x\leq\varepsilon\leq\delta}\abs{T_{\varepsilon,\delta}(f_1,f_2)(x)}
                     \\  &\vee \sup_{\varepsilon\leq \sigma_x\leq \delta}\abs{T_{\varepsilon,\delta}(f_1,f_2)(x)} \\
                     & =:I\vee II\vee III,
\end{align*}
where
\begin{equation*}
  III=\sup_{\varepsilon\leq \sigma_x\leq\delta}\abs{T_{\varepsilon,\sigma_x}(f_1,f_2)(x)+T_{\sigma_x,\delta}(f_1,f_2)(x)}\leq I+II,
\end{equation*}
and $II\leq C_T^0\prod_{j=1}^2\ave{\abs{f_j}}_{Q_0}$ by definition. So altogether we find that
\begin{equation}\label{eq:localBound}
  T_{\sharp,Q_0}(f_1,f_2)(x)\leq \sup_{\varepsilon\leq\delta\leq \sigma_x}\abs{T_{\varepsilon,\delta}(f_1,f_2)(x)}+C^0_T\prod_{j=1}^2\ave{\abs{f_j}}_{Q_0}\hspace{1em}\forall x\in E_0,
\end{equation}
which is a preliminary version of the pointwise domination result we are proving. Now we can use Lemma \ref{lem:dyadicCoveringMod} to get from the preliminary version to the desired estimate. Since $B(x, 2\sigma_x) \subset Q_0$ for every $x \in E_0$, there exists a cube $Q_x \in \mathcal{D}_0$ such that $B(x, 2 \sigma_x) \subset Q_x \subset Q_0$ and $\ell(Q_x) \le 12 \cdot 2 \sigma_x$ for every $x \in E_0$. Let $(Q_i)_i$ be the sequence of such cubes $Q_x$ that are maximal with respect to inclusion, that is, for each $Q_i$ there does not exist $R \in \mathcal{D}_0$ such that $Q_i\subsetneq R \subseteq Q_0$. Then for every $x \in E_0$ we have
\begin{align*}
  T_{\sharp,Q_0} (f_1,f_2)(x) &\overset{\eqref{eq:localBound}}{\le} \sup_{0 < \varepsilon \le \delta \le \sigma_x} \left| T_{\varepsilon,\delta} (f_1,f_2)(x) \right| + C^0_T \prod_{j=1}^2\ave{|f_j|}_{Q_0} \\
                      &\le\sup_{0 < \varepsilon \le \delta \le \frac{1}{2} \cdot dist(x, \partial Q_x)} \left| T_{\varepsilon,\delta} (f_1,f_2)(x) \right| + C^0_T \prod_{j=1}^2\ave{|f_j|}_{Q_0} \\
                      &= T_{\sharp,Q_x} (f_1,f_2)(x) + C^0_T \prod_{j=1}^2\ave{|f_j|}_{Q_0} \\
                      &\leq \max_i T_{\sharp,Q_i}(f_1,f_2)(x) + C^0_T \prod_{j=1}^2\ave{|f_j|}_{Q_0}
\end{align*}
and for every $x \in Q_0 \setminus E_0$ we have $T_{\sharp,Q_0} (f_1,f_2)(x) \le C^0_T \prod_{j=1}^2\ave{|f_j|}_{Q_0}$ by definition. Thus, the cubes $Q_i$ satisfy Lacey's conditions (2) and (3) and to complete the proof, we only need to show that with a suitable choice of $C^0_T$ the cubes also satisfy property \eqref{recursion:prop1}.
Let us split the set $E_0$ into two parts:
\begin{equation*}
  E_1:=\{x\in E_0: \mathcal{M}_{\sigma_x,2\tau_x}(f_1,f_2)(x)\leq C^1_T\prod_{j=1}^2\ave{\abs{f_j}}_{Q_0}\},\qquad    E_2:=E_0\setminus E_1,
\end{equation*}
where $C^1_T$ is a constant whose value we will fix in the next step. Then, for $x\in E_1$ and $x'\in B(x,\frac14\sigma_x)$, we have
\begin{align*}
  \abs{T_{\sigma_x\tau_x}(f_1,f_2)(x')-T_{\sigma_x\tau_x}(f_1,f_2)(x)} &\overset{\ref{lem:truncatedMO}}{\le} c_n(C_K + ||\omega||_{\textup{Dini}})\mathcal{M}_{\sigma_x,2\tau_x}^c (f_1,f_2)(x) \\
                                                       &\le c_n(C_K + ||\omega||_{\textup{Dini}})C_T^1\prod_{j=1}^2\ave{\abs{f_j}}_{Q_0} \\
                                                       &=\frac12 C_T^0\prod_{j=1}^2\ave{\abs{f_j}}_{Q_0},
\end{align*}
provided that we choose
\begin{equation*}
  C_T^1:=\frac{C_T^0}{2c_n(C_K+||\omega||_{\textup{Dini}})}.
\end{equation*}
Then, since $x \in E_1 \subseteq E_0$, it follows that
\begin{equation*}\begin{split}
  T_{\sharp}(\mathbf 1_{Q_0}f_1,\mathbf 1_{Q_0}f_2)(x') &\ge \abs{T_{\sigma_x,\tau_x}(f_1,f_2)(x')} \\&\ge \abs{T_{\sigma_x,\tau_x}(f_1,f_2)(x)}-\frac12 C_T^0 \prod_{j=1}^2\ave{\abs{f_j}}_{Q_0} \\&> \frac12 C_T^0\prod_{j=1}^2\ave{\abs{f_j}}_{Q_0}
\end{split}\end{equation*}
for all $x'\in B(x,\frac14\sigma_x)$. In particular,
\begin{equation*}
\begin{split}
  \left| \bigcup_{x\in E_1}B(x,\tfrac14\sigma_x) \right|^2 &\leq \left| \{T_{\sharp}(\mathbf 1_{Q_0}f_1,\mathbf 1_{Q_0}f_2)>\tfrac12 C_T^0\prod_{j=1}^2\ave{\abs{f_j}}_{Q_0}\} \right|^2 \\
                                                         &\leq \frac{||T_{\sharp}||_{L^1\times L^1\to L^{1/2,\infty}}}{\tfrac12 C_T^0\prod_{j=1}^2\ave{\abs{f_j}}_{Q_0}}\prod_{i=1}^m ||\mathbf 1_{Q_0}f_i||_{L^1}
                                                         \\ &= \frac{2||T_{\sharp}||_{L^1\times L^1\to L^{1/2,\infty}}}{C_T^0}\abs{Q_0}^2
\end{split}
\end{equation*}
by the weak inequality of $T_\sharp$.

Let us then show that with this choice of $C^1_T$ and a suitable choice of $C_T^0$ the size of $E_2$ is controlled. Let $x \in E_2$. By definition, we can choose some $\rho_x\in [\sigma_x,2\tau_x]$ such that
\begin{equation*}
 \prod_{j=1}^2 \dashint_{B(x,\rho_x)}\abs{f_j(y_j)}dy_j >C_T^1\prod_{j=1}^2\ave{\abs{f_j}}_{Q_0}.
\end{equation*}
Since $\tau_x \le \frac{1}{2} \cdot dist(x,\partial Q_0)$, we know that $B(x,2\rho_x) \subset Q_0$. In particular,
\begin{equation*}
  \mathcal{M}(\mathbf 1_{Q_0}f_1,\mathbf 1_{Q_0}f_2)(x')>C^1_T\prod_{j=1}^2\ave{\abs{f_j}}_{Q_0}
\end{equation*}
for all $x'\in B(x,\rho_x)$, where $\mathcal{M}$ is the noncentered bilinear maximal operator
$$
\mathcal{M}(f_1,f_2)(x):=\sup_{B\ni x}\prod_{j=1}^2\dashint_B|f_j|dx.
$$
Thus
\begin{align*}
  \Big| \bigcup_{x\in E_2}&B(x,\frac14\sigma_x) \Big|^2 \\
  &\leq  \Big| \bigcup_{x\in E_2}B(x,\rho_x) \Big|^2 \leq\abs{\{\mathcal{M}(\mathbf 1_{Q_0}f_1,\mathbf 1_{Q_0}f_2)>C_T^1\prod_{j=1}^2\ave{\abs{f}}_{Q_0}\}}^2 \\
                                               &\leq\frac{c_n}{C^1_T\prod_{j=1}^2\ave{\abs{f_j}}_{Q_0}}\prod_{j=1}^2||\mathbf 1_{Q_0}f_j||_{L^1} =\frac{c_n'(C_K+||\omega||_{\textup{Dini}})}{C_T^0}\abs{Q_0}^2.
\end{align*}
by the weak inequality of the bilinear maximal operator.

Finally, let us combine all the previous calculations. For every maximal cube $Q_i$, let $x_i \in E_0$ be a point such that $Q_i = Q_{x_i}$. Then, since $\ell(Q_x) \le 12 \cdot 2\sigma_x$ for each $x \in E_0$, we have $|Q_{x_i}| \le c_n |B(x_i, \tfrac{1}{4} \sigma_{x_i})|$ for every $i$. In particular, since the cubes in the collection $\{ Q_{x_i} : Q_{x_i} \in \mathcal{D}^u\}$ are pairwise disjoint for a fixed $u \in \{0,1,2\}^n$ and $B(x_i, 2\sigma_{x_i})\subset Q_{x_i}$, $B(x_i, \frac 14\sigma_{x_i})$ are pairwise disjoint and therefore,
\begin{align*}
  \sum_i |Q_{x_i}| &= \sum_{u \in \{0,1,2\}^n}\sum_{i:Q_{x_i}\in\mathcal{D}^u} \left| Q_{x_i} \right| \\
     &  \leq c_n \sum_{u\in \{0,1,2\}^n} \sum_{i:Q_{x_i}\in\mathcal{D}^u} \left| B(x_i,\tfrac14\sigma_{x_i}) \right| \\
     &  = c_n \sum_{u \in \{0,1,2\}^n} \Big|  \bigcup_{i:Q_{x_i}\in\mathcal{D}^u } B(x_i,\tfrac14\sigma_{x_i}) \Big| \\
     &\le 3^n c_n \left( \Big| \bigcup_{x \in E_1} B(x,\tfrac{1}{4}\sigma_{x}) \Big| + \Big| \bigcup_{x \in E_2} B(x, \rho_x) \Big| \right) \\
     &\le c_n' \Big(\frac{||T_{\sharp}||_{L^{q_1}\times L^{q_2} \to L^{q}} + C_K + ||\omega||_{\textup{Dini}}}{C_T^0}\Big)^{\frac 12} |Q_0|.
\end{align*}
Hence, if
\begin{equation*}
  C^0_T = c_n(C_K+||\omega||_{\textup{Dini}}+||T_\sharp||_{L^{q_1}\times L^{q_2} \to L^{q}}),
\end{equation*}
then the cubes $Q_i$ satisfy property \eqref{recursion:prop1}.

\end{proof}

\section{Quantitative bounds for bilinear sparse operators}\label{Sect:5}
In this section we establish three different bounds for the family of bi-sublinear sparse operators $\mathcal A_{p_0,\gamma,\mathcal{S}}$. As a consequence of the domination theorem proved in the previous section, we will obtain the same bounds for bilinear Calder\'on-Zygmund operators or any other class of operators which can be controlled by this class of positive dyadic operators. For $\gamma>0$, $p_0\ge 1$, we define $\mathcal A_{p_0,\gamma,\mathcal{S}}$ as follows,
\[
\mathcal A_{p_0,\gamma,\mathcal{S}}(\vec{f})(x):=\left(\sum_{Q\in S} \left[\prod_{i=1}^2 \langle f_i \rangle_{Q,p_0}\right]^\gamma \mathbf 1_{Q}(x)\right)^{1/\gamma},
\]
where for any cube $Q$,
$$
\langle f \rangle_{Q,p_0} := \left(\frac{1}{|Q|} \int_Q |f(x)|^{p_0}dx \right)^{\frac{1}{p_0}}.
$$

Throughout this section we will use the following notation, $\tfrac{\vec{P}}{p_0}=(\tfrac{p_1}{p_0},\tfrac{p_2}{p_0})$.

Let us state our main results in this section. Our first bound is a mixed $A_{\vec{P}}$-$A_\infty$ estimate.

\begin{Theorem}\label{Thm:1}
        Let $\gamma>0$.
 Suppose that $p_0<p_1, p_2<\infty $ with $\frac{1}{p}=\frac{1}{p_1}+ \frac{1}{p_2}$. Let $w$ and $\vec\sigma$ be weights satisfying that $[w,\vec\sigma]_{A_{\vec P/p_0}}<\infty$ and $w, \sigma_i \in A_\infty$ for $i=1,2$. If  $\gamma \ge p_0$, then
\begin{align*}
\|\mathcal{A}_{p_0,\gamma,\mathcal S}&(\cdot\sigma_1, \cdot\sigma_2)\|_{L^{p_1}(\sigma_1)\times L^{p_2}(\sigma_2)\rightarrow L^p(w)} \\
&\lesssim [w,\vec \sigma]_{A_{\vec P/p_0}}^{\frac 1p}\Big( \prod_{i=1}^2 [\sigma_i]_{A_\infty}^{\frac 1{p_i}} +[w]_{A_\infty}^{(\frac 1\gamma-\frac 1p)_+}\sum_{j=1}^{2} \prod_{i\neq j}[\sigma_i]_{A_\infty}^{\frac 1{p_i}} \Big) ,
\end{align*}
where
\[
\left(\frac 1\gamma-\frac 1p\right)_+:=\max\left\{\frac 1\gamma-\frac 1p, 0\right\}.
\] If $\gamma < p_0$, then the above result still holds for all $p>\gamma $.
\end{Theorem}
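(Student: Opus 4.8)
The plan is to use the \emph{parallel stopping cubes} technique together with a duality argument in $L^p(w)$. First I would reduce to the case $\gamma \ge p_0$: if $\gamma < p_0$ and $p > \gamma$, one simply notes that $\mathcal{A}_{p_0,\gamma,\mathcal{S}} \le \mathcal{A}_{p_0,p_0,\mathcal{S}}$ pointwise when $\gamma \ge p_0$ fails is \emph{not} true; rather, the relevant monotonicity is that $\ell^\gamma \hookrightarrow \ell^{p_0}$ gives $\mathcal{A}_{p_0,p_0,\mathcal{S}} \le \mathcal{A}_{p_0,\gamma,\mathcal{S}}$, so for the small-$\gamma$ regime one increases $\gamma$ to $\max\{p_0,\gamma\}$ and checks the exponent $(\tfrac1\gamma - \tfrac1p)_+$ degenerates correctly; I would handle this bookkeeping at the end. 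For the main case, by duality there is $g \in L^{p'}(w)$ with $\|g\|_{L^{p'}(w)} = 1$ such that $\|\mathcal{A}_{p_0,\gamma,\mathcal{S}}(f_1\sigma_1, f_2\sigma_2)\|_{L^p(w)}^\gamma$ is comparable to $\sum_{Q\in\mathcal{S}} \big(\prod_{i=1}^2 \langle f_i\sigma_i\rangle_{Q,p_0}\big)^\gamma \int_Q g^{?} w$ — more precisely, after raising to power $\gamma$ and using $\|\cdot\|_{\ell^{1}} $-duality against the sequence $(\int_Q h w)_Q$ for suitable $h \in L^{(p/\gamma)'}(w)$; since $\gamma$ may exceed $p$ one must be careful and instead dualize $\big(\sum_Q a_Q \mathbf{1}_Q\big)^{1/\gamma}$ directly in $L^p(w)$, which is where the $(\tfrac1\gamma-\tfrac1p)_+$ exponent enters.

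The heart of the argument is to run three interlocking stopping families: one $\mathcal{S}_w$ adapted to $w$ via its $A_\infty$ (Fujii--Wilson) constant, and two families $\mathcal{S}_{\sigma_i}$ adapted to $\sigma_i$ through $[\sigma_i]_{A_\infty}$, organizing the cubes of $\mathcal{S}$ according to which stopping cubes of each family contain them. On each cube $Q$ I would replace the averages $\langle f_i\sigma_i\rangle_{Q,p_0}$ by averages over the minimal stopping ancestor, paying $[\sigma_i]_{A_\infty}^{1/p_i}$ each time one passes through the $\sigma_i$-stopping structure (this is the standard Carleson-embedding-with-$A_\infty$ estimate: $\sum_{Q} \sigma_i(Q) \langle h\rangle_{Q,\sigma_i}^{p_i} \lesssim [\sigma_i]_{A_\infty} \|h\|_{L^{p_i}(\sigma_i)}^{p_i}$, in its maximal-truncated form). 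The sparseness of $\mathcal{S}$ gives $|Q| \lesssim |E_Q|$ for pairwise disjoint $E_Q \subset Q$, and combining this with the $w$-stopping structure converts the outer $\ell^\gamma$/integration against $g$ into either $\prod_i [\sigma_i]_{A_\infty}^{1/p_i}$ (when the geometry lets us dump everything onto the $\sigma$-side) or $[w]_{A_\infty}^{(1/\gamma-1/p)_+}\sum_j \prod_{i\ne j}[\sigma_i]_{A_\infty}^{1/p_i}$ (when one must also unwind the $w$-stopping tree, costing the $w$-constant once and freeing one of the two $\sigma_j$). The common prefactor $[w,\vec\sigma]_{A_{\vec P/p_0}}^{1/p}$ is extracted in the very first step by writing $w(Q)^{\gamma/p}$-type weights against $\prod_i \sigma_i(Q)^{?}$ and invoking the definition of $[w,\vec\sigma]_{A_{\vec P/p_0}}$ on each $Q$.

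The main obstacle I anticipate is precisely the interaction of the three stopping families in the non-Banach regime $p<1$ (equivalently when $\gamma$ is large relative to $p$), where one cannot simply dualize the $L^p(w)$ norm linearly; this is exactly what forces the asymmetric shape of the bound, with the two terms inside the parenthesis and the exponent $(\tfrac1\gamma - \tfrac1p)_+$ on $[w]_{A_\infty}$. One has to decide, cube by cube, whether to ``spend'' the $w$-estimate — and the bookkeeping of which of the two $\sigma_j$ gets liberated in the second term — is the delicate combinatorial point. A secondary technical nuisance is that all the Carleson embeddings must be stated with $\langle \cdot\rangle_{Q,p_0}$ (an $L^{p_0}$-average, not an $L^1$-average), so one first reduces via $\sigma_i^{1-p_i'}$-type substitutions and the condition $p_0 < p_1, p_2$ to genuine $A_\infty$ embeddings for the dilated exponents $\vec P/p_0$; I would set this reduction up at the start so that the rest of the proof runs as in the $p_0 = 1$ case of \cite{LS}.
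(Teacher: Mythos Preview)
Your high-level architecture (reduce to $p_0=1$, dualize the $\ell^\gamma$-sum against $h\in L^{(p/\gamma)'}(w)$, run three parallel stopping families, finish with $A_\infty$-Carleson embeddings) matches the paper. But two concrete points diverge from what the paper actually does, and one of them is a genuine gap.

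\textbf{Stopping on functions, not on weights.} The paper does \emph{not} build stopping families ``adapted to $w$ via its $A_\infty$ constant'' or ``adapted to $\sigma_i$ through $[\sigma_i]_{A_\infty}$''. It builds principal-cube families $\mathcal F_1,\mathcal F_2,\mathcal H$ for the \emph{functions} $f_1,f_2,h$ with respect to the measures $\sigma_1,\sigma_2,w$: one stops when $\langle f_i\rangle_Q^{\sigma_i}$ (resp.\ $\langle h\rangle_Q^{w}$) more than doubles. The $A_\infty$ constants enter only at the very end, when one collapses sums like $\sum_{Q\subset F:\,\pi(Q)=F}\langle\sigma_i\rangle_Q|Q|\lesssim[\sigma_i]_{A_\infty}\sigma_i(F)$. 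Weight-based corona stopping may also work, but that is a different argument and you have not said how it produces the specific exponent $(\tfrac1\gamma-\tfrac1p)_+$ on $[w]_{A_\infty}$.

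\textbf{How $[w,\vec\sigma]_{A_{\vec P}}^{1/p}$ is extracted.} Your line ``extracted in the very first step by writing $w(Q)^{\gamma/p}$-type weights against $\prod_i\sigma_i(Q)^{?}$'' is the real gap. The paper does \emph{not} pull this constant out pointwise; it proves a separate testing estimate (their Lemma~\ref{lm:testing}) of the form
\[
\Big\|\Big(\sum_{Q\in\mathfrak S}\langle\sigma_1\rangle_Q^\gamma\langle\sigma_2\rangle_Q^\gamma\mathbf 1_Q\Big)^{1/\gamma}\Big\|_{L^p(w)}
\le[w,\vec\sigma]_{A_{\vec P}}^{1/p}\Big(\sum_{Q\in\mathfrak S}\langle\sigma_1\rangle_Q^{p/p_1}\langle\sigma_2\rangle_Q^{p/p_2}|Q|\Big)^{1/p},
\]
together with two dual companions. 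The proof of this lemma is nontrivial: it uses the Cascante--Ortega--Verbitsky identity (their Proposition~\ref{dyadicsum}) to linearize the $L^{p/\gamma}$-norm, then a Kolmogorov-type sublinearity bound (their Proposition~\ref{kolmogorov}) of the form $\sum_{Q\subset R}\langle u\rangle_Q^\alpha\langle v\rangle_Q^\beta|Q|\lesssim\langle u\rangle_R^\alpha\langle v\rangle_R^\beta|R|$ for $\alpha+\beta<1$, and a reduction to small $\gamma$ so that the relevant exponents land in the admissible range. Once this lemma is in hand, the stopping-family machinery produces the six terms $I,\dots,III'$ (indexed by the nesting order of $F_1,F_2,H$), and the Carleson embedding closes the estimate. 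Without an analogue of this testing lemma, your proposal does not explain why one gets exactly $[w,\vec\sigma]_{A_{\vec P}}^{1/p}$ rather than a worse power.

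Finally, your discussion of the case $\gamma<p_0$ is garbled. The paper handles the full range uniformly after the $p_0\to1$ reduction; the only genuine case split is $p>\gamma$ (dualize) versus $p\le\gamma$ (no dualization, use the $\ell^{p/\gamma}\hookrightarrow\ell^1$ embedding directly on the stopping sums).
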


Our second result is a mixed bound combining the $A_{\vec{P}}$ constant and a generalization of the  Fujii-Wilson $A_{\infty}$ constant to the bilinear setting which was introduced in \cite{CD}.

\begin{Theorem}\label{Thm:2}
 Let $\gamma>0$.
 Suppose that $p_0<p_1, p_2<\infty $ with $\frac{1}{p}=\frac{1}{p_1}+ \frac{1}{p_2}$ and set $q=p/\gamma$. Let $w$ and $\vec\sigma$ be weights satisfying that $[w,\vec\sigma]_{A_{\vec P/p_0}}<\infty$. If $\gamma \ge p_0$, then
\begin{equation}\begin{split}\label{eq:mixedbound}
\|\mathcal{A}_{p_0,\gamma,\mathcal S}(\cdot\sigma_1, \cdot\sigma_2)&\|_{L^{p_1}(\sigma_1)\times L^{p_2}(\sigma_2)\rightarrow L^p(w)} \\
&\le  [w,\vec{\sigma}]_{A_{\vec P/{p_0}}}^{1/p}([\vec \sigma]_{W_{\vec P}^\infty}^{1/p}+ \sum_{i=1}^2 [\vec \sigma^i]_{W_{\vec {P}^i}^\infty}^{1/{\gamma(\frac{p_i}\gamma)'}}),
\end{split}\end{equation}
where $[\vec \sigma^i]_{W_{\vec {P}^i}^\infty}=1$ if $p\le \gamma$ and otherwise,
\begin{align*}
[\vec \sigma^i]_{W_{\vec {P}^i}^\infty}=\sup_Q \Big(  \int_Q M &(\mathbf 1_Qw)^{\frac{(p_i/\gamma)'}{q'}} \prod_{j\neq i} M (\mathbf 1_Q\sigma_j)^{\frac{(p_i/\gamma)'}{p_j/\gamma}} dx\Big) \\ &\times\Big(\int_Q w^{\frac{(p_i/\gamma)'}{q'}}\prod_{j\neq i} \sigma_j^{\frac{(p_i/\gamma)'}{p_j/\gamma}}dx \Big)^{-1}.
\end{align*}
If  $\gamma < p_0$, then the above result still holds for all $p>\gamma $.
\end{Theorem}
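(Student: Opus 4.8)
The plan is to follow the parallel stopping cubes technique, reducing the weighted estimate for $\mathcal{A}_{p_0,\gamma,\mathcal{S}}$ to a testing-type argument against the pair $(w,\vec\sigma)$. First I would observe that by the usual duality/homogeneity reductions it suffices to bound, for nonnegative $f_i\in L^{p_i}(\sigma_i)$ and $g\in L^{q'}(w)$ with $q=p/\gamma$, the quantity $\int_{\mathbb{R}^n} \big(\sum_{Q\in\mathcal{S}}[\prod_{i=1}^2\langle f_i\sigma_i\rangle_{Q,p_0}]^\gamma \mathbf 1_Q\big)\, g\, w\,\mathrm{d}x$, i.e. I raise $\mathcal{A}_{p_0,\gamma,\mathcal{S}}$ to the power $\gamma$ and test the resulting $q$-to-$q$ bound for a linear sparse-type object. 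Here I would first dispose of the case $\gamma<p_0$ by the standard trick of replacing $p_0$ by $\gamma$ (which only shrinks the exponent in $\langle\cdot\rangle_{Q,p_0}$, hence dominates the operator) so that from then on $\gamma\ge p_0$ may be assumed, and also reduce $\langle f_i\sigma_i\rangle_{Q,p_0}$ to the $p_0=1$ shape by Jensen after absorbing $\sigma_i^{1/p_0}$ appropriately; the $A_{\vec P/p_0}$ constant is exactly what makes this legitimate.

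Next I would run two parallel stopping-time constructions. On the one hand, for each $i=1,2$ build the stopping cubes $\mathcal{F}_i$ for $f_i$ relative to the measure $\sigma_i$ (doubling the $\sigma_i$-average), so that on each stopping interval the localized averages of $f_i$ are controlled by the stopping value; on the other hand, build a stopping family for $g$ relative to $w$. Then for each $Q\in\mathcal{S}$ I assign it to the minimal stopping cubes above it in each family and estimate $[\prod_i\langle f_i\sigma_i\rangle_Q]^\gamma\langle g w\rangle_Q|Q|$ by the product of the three stopping values times the geometric-quantity $\langle\sigma_1\rangle_Q^{?}\langle\sigma_2\rangle_Q^{?}\langle w\rangle_Q^{?}$. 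The sparseness of $\mathcal{S}$ and a Carleson-embedding / $A_\infty$-averaging argument then converts each $\sum_Q$ over a fixed triple of stopping cubes into an integral, and the key point is that these integrals are precisely of the form appearing in the definitions \eqref{eq:Fujii_constant} of $[\vec\sigma]_{W_{\vec P}^\infty}$ and of the auxiliary constants $[\vec\sigma^i]_{W_{\vec P^i}^\infty}$ displayed in the statement — the first when we sum against the $\sigma_i$'s alone (giving $[w,\vec\sigma]_{A_{\vec P/p_0}}^{1/p}[\vec\sigma]_{W_{\vec P}^\infty}^{1/p}$), and the latter two when the stopping on $g$ (i.e. on $w$) is the active constraint, producing the exponent $1/(\gamma(p_i/\gamma)')$. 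Hölder across the three families of stopping values with exponents $p_1/\gamma$, $p_2/\gamma$, $q'$ (note $\gamma/p_1+\gamma/p_2+1/q'=1$) finishes the sum.

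The main obstacle I anticipate is bookkeeping the three-fold parallel stopping correctly so that the power of $\gamma$ interacts cleanly with the three Hölder exponents, and in particular identifying which of the three measures' stopping data dominates in which term — this is what forces the case split $p\le\gamma$ versus $p>\gamma$ (when $p\le\gamma$ the $g$-stopping is trivial, $[\vec\sigma^i]_{W^\infty}=1$, and only the $W_{\vec P}^\infty$ term survives). A secondary technical point is justifying the Carleson-embedding step with the sharp $A_\infty$-dependence: one needs the sparse family, after the stopping reduction, to satisfy a $w$- (resp. $\sigma_i$-) Carleson packing with constant controlled by the relevant Fujii–Wilson constant, which is the multilinear analogue of the Hytönen–Pérez $A_\infty$ argument and should go through by testing the maximal function exactly as in \eqref{eq:Fujii_constant}. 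Everything else — the qualitative finiteness, the reduction to finite sparse families, and the final assembly — is routine.
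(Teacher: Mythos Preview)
Your overall architecture---dualize by raising to the power $\gamma$, run three parallel stopping families on $(f_1,\sigma_1)$, $(f_2,\sigma_2)$, $(g,w)$, split into cases according to which stopping cube is innermost, and close with H\"older in the exponents $p_1/\gamma$, $p_2/\gamma$, $q'$---is exactly the scheme the paper uses (it is set up in the proof of Theorem~\ref{Thm:1} and recycled verbatim for Theorem~\ref{Thm:2}). Two points, however, need correction.

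First, your disposal of the case $\gamma<p_0$ is backwards. Replacing $p_0$ by $\gamma$ in $\langle\cdot\rangle_{Q,p_0}$ when $\gamma<p_0$ makes the average \emph{smaller}, so the new operator is dominated by the old one, not the reverse; this does not prove anything about $\mathcal{A}_{p_0,\gamma,\mathcal S}$. The paper instead performs a global substitution $f_i\mapsto f_i^{1/p_0}$ which turns the $p_0$-averages into $1$-averages and simultaneously rescales $\gamma\to\gamma/p_0$, $p_i\to p_i/p_0$, so the general problem is equivalent (not merely bounded by) the $p_0=1$ problem with the rescaled parameters. After that reduction the relevant dichotomy is $p>\gamma$ versus $p\le\gamma$, which you do identify.

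Second, and more substantively, the step you describe as ``sparseness and a Carleson-embedding/$A_\infty$-averaging argument converts $\sum_Q$ into an integral'' hides the actual work. After factoring out the stopping values you are left, inside each stopping cell, with $\big\|\sum_{Q}\langle\sigma_1\rangle_Q^\gamma\langle\sigma_2\rangle_Q^\gamma\mathbf 1_Q\big\|_{L^q(w)}$, and the exponents $\gamma,\gamma$ do \emph{not} match the exponents $p/p_1,p/p_2$ that a direct Carleson/$W_{\vec P}^\infty$ embedding would want. The paper handles this via the testing Lemma~\ref{lm:testing} (built on Propositions~\ref{dyadicsum} and~\ref{kolmogorov}), which iteratively trades the $\gamma$-powers for the correct $p/p_i$-powers at the cost of exactly $[w,\vec\sigma]_{A_{\vec P}}^{1/p}$; only after that does sparseness give $\sum_Q\langle\sigma_1\rangle_Q^{p/p_1}\langle\sigma_2\rangle_Q^{p/p_2}|Q|\lesssim\int\prod_i M(\mathbf 1_{F}\sigma_i)^{p/p_i}$ and the $W_{\vec P}^\infty$ constant enters. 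This lemma (and its dual versions producing $[\vec\sigma^i]_{W_{\vec P^i}^\infty}$) is the technical core, and your plan needs to incorporate it rather than absorb it into ``Carleson embedding''.
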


Finally, we give a mixed bound combining the $A_{\vec{P}}$ constant and a generalization of the  Hrus\v{c}\v{e}v $A_{\infty}$ constant to the bilinear setting.
\begin{Theorem}\label{Thm:3}
Let $\gamma>0$.
 Suppose that $p_0<p_1, p_2<\infty $ with $\frac{1}{p}=\frac{1}{p_1}+ \frac{1}{p_2}$ and set $q=p/\gamma$. Let $w$ and $\vec\sigma$ be weights satisfying that $[w,\vec\sigma]_{A_{\vec P/p_0}}<\infty$. If $\gamma \ge p_0$, then
\begin{equation}\label{eq:mixedboundH}
\|\mathcal{A}_{p_0,\gamma,\mathcal S}(\cdot\sigma_1, \cdot\sigma_2)\|_{L^{p_1}(\sigma_1)\times L^{p_2}(\sigma_2)\rightarrow L^p(w)}\le [w,\vec\sigma]_{A_{\vec{P}/{p_0}}}^{\frac 1p} ([\vec \sigma]_{H_{\vec P}^\infty}^{1/p}+  \sum_{i=1}^2 [\vec \sigma^i]_{H_{\vec {P}^i}^\infty}^{1/{p_i'}}),
\end{equation}
where $[\vec \sigma^i]_{H_{\vec {P}^i}^\infty}=1$ if $p\le \gamma$ and otherwise,
\begin{equation}\begin{split}
[\vec \sigma^i]_{H_{\vec {P}^i}^\infty}=\sup_Q &\langle w \rangle_{Q}^{p_i'(\frac 1\gamma-\frac 1p)_+} \exp{\left(\dashint_Q \log w^{-1}\right)}^{p_i'(\frac 1\gamma-\frac 1p)_+} \\ &\times\prod_{j\neq i} \langle \sigma_i \rangle_{Q}^{p_i'/p_j} \exp{\left(\dashint_Q \log\sigma_i^{-1}\right)}^{p_i'/p_j}.
\end{split}\end{equation}
If  $\gamma < p_0$, then the above result still holds for all $p>\gamma $.
\end{Theorem}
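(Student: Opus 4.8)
\textbf{Proof plan for Theorem~\ref{Thm:3}.} The plan is to follow the same parallel-stopping-cubes scheme used for Theorems~\ref{Thm:1} and~\ref{Thm:2}, reducing everything to a single principal-cube estimate and then invoking the appropriate $A_\infty$-type covering inequality; only the final summation lemma changes, with the Hru\v{s}\v{c}ev constant playing the role that $[\vec\sigma]_{W_{\vec P}^\infty}$ played before. First I would reduce to the case $\gamma\ge p_0$ and, by the standard $\ell^\gamma\hookrightarrow\ell^{p_0}$ type rescaling, note that $\mathcal A_{p_0,\gamma,\mathcal S}(\cdot\sigma_1,\cdot\sigma_2)$ acting on $L^{p_1}(\sigma_1)\times L^{p_2}(\sigma_2)$ can be treated by replacing $f_i\sigma_i$ with $g_i=f_i\sigma_i$ and estimating $\|\big(\sum_{Q}(\prod_i\langle g_i\rangle_{Q,p_0})^\gamma \mathbf 1_Q\big)^{1/\gamma}\|_{L^p(w)}$; when $\gamma<p_0$ the same argument runs verbatim for $p>\gamma$ since one only ever uses $\gamma\ge p_0$ through the exponent $q=p/\gamma$ being at least $1$, which is exactly the hypothesis $p>\gamma$.

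Next I would set $q=p/\gamma$ and split into the two cases $q\le 1$ (i.e.\ $p\le\gamma$) and $q>1$. When $q\le 1$, by the elementary bound $(\sum a_Q)^{q}\le\sum a_Q^{q}$ one has $\|\mathcal A\|_{L^p(w)}^p=\|(\sum_Q\cdots)^{q}\|_{L^1(w)}^{\,}\le\sum_Q(\prod_i\langle g_i\rangle_{Q,p_0})^{p}w(Q)$, and then a direct application of H\"older's inequality cube-by-cube together with the sparseness of $\mathcal S$ (each $Q\in\mathcal S$ carries a major subset $E_Q$ with $|E_Q|\ge(1-\gamma_0)|Q|$, the cubes $E_Q$ disjoint) gives the bound by $[w,\vec\sigma]_{A_{\vec P/p_0}}^{1/p}$ alone, consistent with setting $[\vec\sigma^i]_{H_{\vec P^i}^\infty}=1$. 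When $q>1$ one dualizes: write $\|\mathcal A\|_{L^p(w)}$ against an $h\in L^{q'}(w)$ and organize the sum over $Q\in\mathcal S$ into principal cubes for the three stopping conditions governing $\langle g_1\rangle_{Q,p_0}$, $\langle g_2\rangle_{Q,p_0}$, and $\langle h w\rangle_Q$, exactly as in the proofs of Theorems~\ref{Thm:1} and~\ref{Thm:2}. On each tree one extracts the $A_{\vec P/p_0}$ factor from the frozen averages and is left with a product of sums of local averages of $w$ and of the $\sigma_j$ ($j\ne i$) over the $\mathcal S$-cubes below a principal cube; this is precisely where the Hru\v{s}\v{c}ev-type quantity enters, through the sharp weighted Carleson-embedding/$A_\infty$ estimate that bounds $\sum_{Q\subseteq R,Q\in\mathcal S}\langle u\rangle_Q|Q|$ by $[u]_{H^\infty}\,u(R)$-type expressions (the multilinear analogue of the classical fact that a weight with finite Hru\v{s}\v{c}ev constant is in $A_\infty$ with comparable constant). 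Summing the geometrically decaying principal-cube contributions and recombining the three trees yields the stated power $1/p_i'$ on $[\vec\sigma^i]_{H_{\vec P^i}^\infty}$ and $1/p$ on $[\vec\sigma]_{H_{\vec P}^\infty}$.

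The main obstacle, and the only place genuinely different from Theorem~\ref{Thm:2}, is establishing the correct Hru\v{s}\v{c}ev-flavored summation lemma with sharp exponents: one needs that for a weight $u$ and a sparse family, $\sum_{Q}\langle u\rangle_Q^{\theta}\mathbf 1_Q$ or the corresponding bilinear product is controlled using $\exp(\dashint_Q\log u^{-1})$ rather than the Fujii--Wilson quantity, and this requires replacing the maximal-function Carleson estimate used in the $W_{\vec P}^\infty$ case by an argument based on the reverse-Jensen inequality $\langle u\rangle_Q\exp(\dashint_Q\log u^{-1})\ge 1$ together with a John--Nirenberg/exponential integrability bound for $\log u$ on sparse towers. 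I would isolate this as a preliminary lemma (a bilinear Hru\v{s}\v{c}ev embedding), prove it by comparing $[\vec\sigma]_{H_{\vec P}^\infty}$ with the bilinear Fujii--Wilson constant up to dimensional constants — which is known to hold — and then the rest of the proof is a word-for-word transcription of the argument for Theorem~\ref{Thm:2} with $W$ replaced by $H$ throughout. The bookkeeping of the exponents $(p_i/\gamma)'$, $p_i'$, and $(\tfrac1\gamma-\tfrac1p)_+$ is routine once the embedding lemma is in hand.
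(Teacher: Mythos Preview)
Your overall framework---reduction to $p_0=1$ via Lemma~\ref{lm:equivalent}, dualization when $q>1$, and the parallel stopping families $\mathcal F_1,\mathcal F_2,\mathcal H$ organizing the sum into the pieces $I,I',II,II',III,III'$---is exactly the paper's scheme, and the paper's proof of Theorem~\ref{Thm:3} indeed only treats the term $I$, referring back to the proof of Theorem~\ref{Thm:1} for the remaining structure.

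The gap is in your proposed ``Hru\v{s}\v{c}ev embedding'' step. You plan to prove it by comparing $[\vec\sigma]_{H_{\vec P}^\infty}$ with the multilinear Fujii--Wilson constant $[\vec\sigma]_{W_{\vec P}^\infty}$ and then importing the argument of Theorem~\ref{Thm:2} verbatim. But the multilinear inequality $[\vec\sigma]_{W_{\vec P}^\infty}\lesssim[\vec\sigma]_{H_{\vec P}^\infty}$ is not established in the paper or its references, and it is not obvious: the $W$-constant involves $\int_Q\prod_i M(\sigma_i\mathbf 1_Q)^{p/p_i}$ normalized by the \emph{joint} measure $\int_Q\prod_i\sigma_i^{p/p_i}$, while the $H$-constant is a product of \emph{individual} average-vs-geometric-mean ratios, and relating the two would require a lower bound on $\int_Q\prod_i\sigma_i^{p/p_i}$ in terms of the individual $\sigma_i(Q)$, which fails in general. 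Even in the linear case the sharp comparison $[w]_{A_\infty}\lesssim[w]_H$ is proved via the very tool the paper uses here directly, so your route is at best circular.

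That tool is the boundedness of the \emph{logarithmic maximal function}
\[
M_0f(x)=\sup_{Q\ni x}\exp\Big(\dashint_Q\log|f|\Big)
\]
on $L^p$ for every $p\in(0,\infty)$, in particular on $L^1$ (see \cite{HP}). After applying \eqref{eq:testing} exactly as in Theorem~\ref{Thm:2}, the paper inserts the definition of $[\vec\sigma]_{H_{\vec P}^\infty}$ to replace each $\langle\sigma_i\rangle_Q^{p/p_i}$ by $\exp(\dashint_Q\log\sigma_i)^{p/p_i}$, splits the resulting sum over $Q$ by H\"older into a product over $i$, and then bounds
\[
\sum_{\substack{Q\in\mathcal S\\ Q\subset F}}\exp\Big(\dashint_Q\log\sigma_i\Big)\,|Q|
\;\lesssim\;\|M_0(\sigma_i\mathbf 1_F)\|_{L^1}\;\lesssim\;\sigma_i(F)
\]
directly, with no detour through $W_{\vec P}^\infty$. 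If you replace your comparison step by this $M_0$-argument, the rest of your plan goes through.
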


Before proving Theorems~\ref{Thm:1}, \ref{Thm:2} and \ref{Thm:3}, we need the following two results. The first proposition can be found in \cite[Proposition 2.2]{cov2004}.

\begin{Proposition}\label{dyadicsum}
Let $1<s<\infty$, $\sigma$ be a positive Borel measure and
\[
\phi=\sum_{Q\in\mathcal D} \alpha_Q \mathbf 1_Q,\qquad \phi_Q=\sum_{Q'\subset Q}\alpha_{Q'} \mathbf 1_{Q'}.
\]
Then
\[
\|\phi\|_{L^s(\sigma)}\eqsim \Big( \sum_{Q\in \mathcal D} \alpha_Q (\langle\phi_Q\rangle_Q^\sigma)^{s-1}\sigma(Q) \Big)^{1/s}.
\]
\end{Proposition}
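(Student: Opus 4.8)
The plan is to establish the two-sided comparison $\|\phi\|_{L^s(\sigma)}^s\eqsim\Lambda$, where $\Lambda:=\sum_{Q\in\mathcal D}\alpha_Q\,(\langle\phi_Q\rangle_Q^\sigma)^{s-1}\sigma(Q)$ and $\langle\phi_Q\rangle_Q^\sigma:=\sigma(Q)^{-1}\int_Q\phi_Q\,d\sigma$, with constants depending only on $s$; we may assume $\alpha_Q\ge0$, and since only the tree structure of $\mathcal D$ is used no dimensional constant enters. For the estimate $\Lambda\lesssim\|\phi\|_{L^s(\sigma)}^s$ I would use that on each cube one has $\phi=\phi_Q+c_Q$ on $Q$, with $c_Q:=\sum_{Q'\supsetneq Q}\alpha_{Q'}\ge0$ constant on $Q$; hence $\phi_Q\le\phi$ there and $\langle\phi_Q\rangle_Q^\sigma\le\langle\phi\rangle_Q^\sigma\le M_\sigma^{\mathcal D}\phi(x)$ for every $x\in Q$, where $M_\sigma^{\mathcal D}$ is the dyadic maximal operator with respect to $\sigma$. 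Raising to the power $s-1>0$, integrating over $Q$ and summing against $\alpha_Q$ gives $\Lambda\le\int\phi\,(M_\sigma^{\mathcal D}\phi)^{s-1}\,d\sigma$, and Hölder's inequality together with the $L^s(\sigma)$-boundedness of $M_\sigma^{\mathcal D}$ completes this direction.

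For the reverse inequality $\|\phi\|_{L^s(\sigma)}^s\lesssim\Lambda$ I would first record the elementary pointwise comparison: fixing $x$ and enumerating the cubes of $\mathcal D$ through $x$ in decreasing order as $R_0\supsetneq R_1\supsetneq\cdots$, the values $\phi_{R_k}(x)=\sum_{j\ge k}\alpha_{R_j}$ are the tails of the convergent series $\phi(x)=\sum_j\alpha_{R_j}$, so comparing $\sum_k(\phi_{R_k}(x)-\phi_{R_{k+1}}(x))\,\phi_{R_k}(x)^{s-1}$ with $\int_0^{\phi(x)}t^{s-1}\,dt$ yields $\tfrac1s\,\phi(x)^s\le\sum_{Q\ni x}\alpha_Q\,\phi_Q(x)^{s-1}$. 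Integrating in $d\sigma$ reduces the claim to
\[
\sum_{Q\in\mathcal D}\alpha_Q\int_Q\phi_Q^{\,s-1}\,d\sigma\lesssim\Lambda .
\]
When $1<s\le2$ the function $t\mapsto t^{s-1}$ is concave, so Jensen's inequality gives $\int_Q\phi_Q^{\,s-1}\,d\sigma\le\sigma(Q)(\langle\phi_Q\rangle_Q^\sigma)^{s-1}$ term by term and we are done in this range. For $s>2$ this termwise bound is false and the internal oscillation of each $\phi_Q$ has to be absorbed by summing over scales: here I would run a stopping-time (principal cubes) decomposition of $\mathcal D$ adapted to $\sigma$ and to the averages $\langle\phi_Q\rangle_Q^\sigma$, split $\phi$ on each selected cube into $\phi_Q$ plus its constant tail, control the tail contributions by the same telescoping device applied to partial sums taken from the top (which gives $\sum_{Q\ni x}\alpha_Q\,c_Q(x)^{s-1}\le\tfrac1s\phi(x)^s$), and dispatch the residual integrals either by Jensen once the residual exponent has dropped to $\le1$ or by induction on $\lceil s\rceil$ applied to the localized functions $\phi_Q$ on $Q$.

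The main obstacle I anticipate is exactly this last organization for $s>2$: a single splitting of $\phi$ into $\phi_Q$ and its tail only improves the estimate by a fixed geometric factor, so one must iterate through an entire chain of scales while keeping the accumulated $s$-dependent constants under control — this is precisely the discrete Wolff-type inequality underlying \cite{cov2004}, and everything outside of it (the pointwise telescoping, the Jensen steps, the maximal-function bound) is routine.
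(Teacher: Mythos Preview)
The paper does not prove this proposition at all: it is quoted verbatim from \cite[Proposition~2.2]{cov2004} and used as a black box, so there is no ``paper's own proof'' to compare against. Your argument for the direction $\Lambda\lesssim\|\phi\|_{L^s(\sigma)}^s$ is clean and complete, and your telescoping plus Jensen argument for the reverse direction when $1<s\le 2$ is also correct. For $s>2$ you have correctly identified that the remaining step is exactly the discrete Wolff-type inequality of \cite{cov2004}, but your sketch of the stopping-time/induction scheme is not yet a proof --- in particular the sentence ``dispatch the residual integrals either by Jensen once the residual exponent has dropped to $\le1$ or by induction on $\lceil s\rceil$'' hides the genuine work, since one must check that the iteration actually lowers the effective exponent and that the constants stay bounded. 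That is not a gap so much as an honest deferral to the cited source, which is precisely what the paper itself does.
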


The following proposition follows the same spirit as that in \cite{HL} and it allows us to avoid the ``slicing'' argument. Namely, the separate consideration of families of cubes with the $A_{\vec P}$ characteristic ``frozen'' to a certain value $\langle w\rangle_Q\prod_{i=1}^2\langle \sigma_i\rangle_Q^{p/{p_i'}}\eqsim 2^k$.

By using Proposition~\ref{dyadicsum}, it is also possible to give an alternative proof of our main results by using the outer measure theory studied in \cite{DT,TTV}. Notice that here the stopping cubes method provides a more direct proof.

\begin{Proposition}\label{kolmogorov}
Let $\mathcal S$ be a sparse family and $0\le \gamma, \eta<1$ satisfying $\gamma+\eta<1$. Then
\begin{equation}\label{eq:kolmogorov}
\sum_{\substack{Q\in \mathcal S\\Q\subset R}}\langle u \rangle_Q^\gamma \langle v\rangle_Q^\eta |Q| \lesssim \langle u \rangle_R^\gamma \langle v\rangle_R^\eta |R|.
\end{equation}
\end{Proposition}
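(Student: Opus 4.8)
The inequality \eqref{eq:kolmogorov} is a Kolmogorov/Carleson-type embedding for sparse families, and the natural route is to reduce it to the single-function case $\langle u\rangle_Q^{\gamma}\langle v\rangle_Q^{\eta}\le \langle u^{\gamma/(\gamma+\eta)}v^{\eta/(\gamma+\eta)}\rangle_Q^{\gamma+\eta}$ by Hölder's inequality with exponents $(\gamma+\eta)/\gamma$ and $(\gamma+\eta)/\eta$, applied inside the average over $Q$. Writing $s:=\gamma+\eta\in(0,1)$ and $g:=u^{\gamma/s}v^{\eta/s}\ge 0$, the claim becomes
\[
\sum_{\substack{Q\in\mathcal S\\ Q\subset R}}\langle g\rangle_Q^{s}\,|Q|\lesssim \langle g\rangle_R^{s}\,|R|,
\]
with an implicit constant depending only on $s$ (equivalently on $\gamma,\eta$) and the sparseness constant of $\mathcal S$. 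This is now a statement purely about one positive weight, and the right-hand side is finite since $g\mathbf 1_R\in L^1$.

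**Key steps.** First I would normalize, dividing $g$ by $\langle g\rangle_R$, so that $\langle g\rangle_R=1$ and it suffices to show $\sum_{Q\subset R,\,Q\in\mathcal S}\langle g\rangle_Q^{s}|Q|\lesssim |R|$. Next I would stratify the sparse cubes by the size of their average: set $\mathcal S_k:=\{Q\in\mathcal S:\ Q\subset R,\ 2^{k}\le\langle g\rangle_Q<2^{k+1}\}$ for $k\in\mathbb Z$, and let $E_k:=\bigcup_{Q\in\mathcal S_k}Q$. Since $s<1$ and $0\le s$, one has $\langle g\rangle_Q^{s}\le 2^{(k+1)s}$ on $\mathcal S_k$, hence
\[
\sum_{\substack{Q\in\mathcal S\\ Q\subset R}}\langle g\rangle_Q^{s}|Q|\le\sum_{k\in\mathbb Z}2^{(k+1)s}\sum_{Q\in\mathcal S_k}|Q|.
\]
For $k\le 0$ we bound $\sum_{Q\in\mathcal S_k}|Q|\le |R|$ trivially (or use disjointness of the maximal cubes), and $\sum_{k\le 0}2^{(k+1)s}<\infty$ because $s>0$; this handles the "low" part. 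For $k\ge 1$, the point is that the cubes in $\mathcal S_k$ with $\langle g\rangle_Q\ge 2^{k}$ are, up to the sparseness constant, controlled by the level set $\{M_{\mathcal D}^R g>c\,2^{k}\}$ where $M_{\mathcal D}^R$ is the dyadic maximal function restricted to $R$: indeed, by sparseness each such $Q$ carries a subset $E(Q)\subset Q$ of measure $\ge(1-\gamma_0)|Q|$ with the $E(Q)$ pairwise disjoint, and on $E(Q)$ one still has $M_{\mathcal D}^R g\gtrsim 2^{k}$ after passing to a fixed dilate, so $\sum_{Q\in\mathcal S_k}|Q|\lesssim |\{M_{\mathcal D}^R g>c\,2^{k}\}|$. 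Then the weak $(1,1)$ bound for the dyadic maximal operator gives $|\{M_{\mathcal D}^R g>c\,2^{k}\}|\le (c\,2^{k})^{-1}\|g\mathbf 1_R\|_1=(c\,2^{k})^{-1}|R|$, and summing,
\[
\sum_{k\ge 1}2^{(k+1)s}\cdot\frac{1}{c\,2^{k}}|R|=\frac{2^{s}}{c}|R|\sum_{k\ge1}2^{-k(1-s)}\lesssim_{s}|R|,
\]
where convergence uses $s<1$. Adding the two ranges of $k$ completes the proof. Alternatively—and perhaps cleaner to write—one can avoid the maximal function entirely and run a stopping-time argument directly: inside $R$ select maximal dyadic subcubes $R_j$ with $\langle g\rangle_{R_j}>2\langle g\rangle_R$, note $\sum_j|R_j|\le\frac12|R|$ by the $L^1$ average, split the sum over $\{Q:\langle g\rangle_Q\le 2\langle g\rangle_R\}$ (bounded by $2^{s}\langle g\rangle_R^{s}|R|$ using disjointness of maximal such $Q$ and $s\le 1$, or simply by $2^s\langle g\rangle_R^s|R|$ after noting these $Q$ tile a subset of $R$) plus the sum over $Q\subset$ some $R_j$, and iterate; the geometric decay $\sum|R_j|\le\frac12|R|$ closes the recursion since $s>0$.

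**Main obstacle.** The only genuinely delicate point is the passage, in the stratification argument, from "$\langle g\rangle_Q$ large" for sparse cubes to a genuine level set of measure $\lesssim 2^{-k}|R|$: one must use the sparseness hypothesis (the disjoint major subsets $E(Q)$) rather than mere pairwise disjointness, since sparse cubes need not be disjoint, and one must be careful that after restricting to $R$ the relevant averages/maximal function live inside $R$ so that the $L^1$ norm that appears is exactly $\|g\mathbf 1_R\|_1$, yielding the clean right-hand side $|R|$ (equivalently $\langle g\rangle_R^{s}|R|$ before normalization). Everything else is bookkeeping with the two convergent geometric series, which converge precisely because $0\le\gamma+\eta<1$ forces $0<s<1$; this is exactly where the hypothesis $\gamma+\eta<1$ (and implicitly $\gamma,\eta\ge 0$, not both zero) is used.
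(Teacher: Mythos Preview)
Your very first reduction step is incorrect: H\"older's inequality with exponents $(\gamma+\eta)/\gamma$ and $(\gamma+\eta)/\eta$ applied \emph{inside} the average gives
\[
\big\langle u^{\gamma/(\gamma+\eta)}v^{\eta/(\gamma+\eta)}\big\rangle_Q \le \langle u\rangle_Q^{\gamma/(\gamma+\eta)}\langle v\rangle_Q^{\eta/(\gamma+\eta)},
\]
which after raising to the power $\gamma+\eta$ yields $\langle g\rangle_Q^{\gamma+\eta}\le\langle u\rangle_Q^{\gamma}\langle v\rangle_Q^{\eta}$, the \emph{reverse} of what you claim. So the two-function expression is not dominated by the single-function one, and the remainder of your argument---which does correctly establish the single-function Carleson embedding $\sum_{Q\subset R}\langle g\rangle_Q^{s}|Q|\lesssim\langle g\rangle_R^{s}|R|$ for $0<s<1$---does not conclude the proposition.

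The paper keeps $u$ and $v$ separate throughout: sparseness replaces $|Q|$ by the pairwise disjoint $|E_Q|$, so the sum is bounded by $\int_R M(u\mathbf 1_R)^{\gamma}M(v\mathbf 1_R)^{\eta}\,dx$; H\"older is then applied to this \emph{integral} with exponents $s,s'$ chosen so that $s\gamma<1$ and $s'\eta<1$, and Kolmogorov's inequality for the weak-$(1,1)$ maximal operator finishes each factor. A simple way to repair your route (when $\gamma,\eta>0$) is to apply H\"older to the \emph{sum} over $Q$ rather than inside each average:
\[
\sum_{\substack{Q\in\mathcal S\\ Q\subset R}}\langle u\rangle_Q^{\gamma}\langle v\rangle_Q^{\eta}|Q|
\le\Big(\sum_{\substack{Q\in\mathcal S\\ Q\subset R}}\langle u\rangle_Q^{s}|Q|\Big)^{\gamma/s}
\Big(\sum_{\substack{Q\in\mathcal S\\ Q\subset R}}\langle v\rangle_Q^{s}|Q|\Big)^{\eta/s},
\qquad s=\gamma+\eta,
\]
after which your single-function estimate, applied once to $u$ and once to $v$, gives exactly $\langle u\rangle_R^{\gamma}\langle v\rangle_R^{\eta}|R|$.
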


\begin{proof}
Indeed, set $1/r:=\gamma+\eta$, $1/s:=\gamma+(1-1/r)/2$ and $1/{s'}:=1-1/{s}$.  By sparseness and Kolmogorov's inequality, we have
\begin{align*}
\sum_{\substack{Q\in \mathcal S\\Q\subset R}}\langle u \rangle_Q^\gamma \langle v\rangle_Q^\eta |Q|&\le 2 \sum_{\substack{Q\in \mathcal S\\Q\subset R}}\langle u \rangle_Q^\gamma \langle v\rangle_Q^\eta |E_Q|\\
&\le 2 \int_{R} M(u\mathbf 1_R)^{\gamma} M(v\mathbf 1_R)^{\eta}dx\\
&\le 2 \Big(\int_R M(u\mathbf 1_R)^{s\gamma}  \Big)^{1/s}\Big(\int_R M(v\mathbf 1_R)^{s'\eta}  \Big)^{1/{s'}}\\
&\lesssim \langle u\rangle_R^\gamma |R|^{1/s} \langle v\rangle_R^{\eta}|R|^{1/{s'}}=\langle u \rangle_R^\gamma \langle v\rangle_R^\eta |R|.
\end{align*}
\end{proof}

Our first observation is that we can reduce the problem to study the case of $p_0=1$. Indeed,
consider the two weight norm inequality
\begin{equation}\label{eq:ep}
\|\mathcal A_{p_0,\gamma,\mathcal S }(f_1  ,f_2  )\|_{L^p(w)}\le \mathcal N(\vec P, p_0,\gamma, w, \vec \sigma) \|f_1\|_{L^{p_1}(w_1)} \|f_2\|_{L^{p_2}(w_2)},
\end{equation}
where we use $\mathcal N(\vec P, p_0,\gamma, w, \vec \sigma)$ to denote the best constant such that \eqref{eq:ep} holds.
Rewrite \eqref{eq:ep} as
\[
\|\mathcal A_{p_0,\gamma, \mathcal S}( f_1 ^{\tfrac{1}{p_0}} , f_2 ^{\tfrac{1}{p_0}} )\|_{L^p(w)}^{p_0}\le \mathcal N(\vec P, p_0,\gamma, w, \vec \sigma)^{p_0} \|f_1^{\tfrac{1}{p_0}} \|_{L^{p_1}(w_1)} ^{p_0} \|f_2^{\tfrac{1}{p_0}} \|_{L^{p_2}(w_2)}^{p_0},
\]
which is equivalent to the following
\[
\|\mathcal A_{1,\frac\gamma{p_0}, \mathcal S}( f_1   , f_2  )\|_{L^{p/{p_0}}(w)} \le \mathcal N(\vec P, p_0, \gamma,w, \vec \sigma)^{p_0} \|f_1  \|_{L^{p_1/{p_0}}(w_1)}  \|f_2  \|_{L^{p_2/{p_0}}(w_2)}.
\]
Therefore, if we denote by $\mathcal N(\vec P, \gamma,w,\sigma)$ the best constant for the case $p_0=1$, then the best constant for general $p_0$ would be $\mathcal N(\vec P/{ p_0}, \gamma/ {p_0}, w,\sigma)^{1/{p_0}}$. Therefore, it suffices to study the case of $p_0=1$.

Our second observation can be stated as follows, as it was done in \cite{HL, Li}.
\begin{Lemma}\label{lm:equivalent}
Suppose that $p> \gamma$.
Let $\mathcal N$ denote the best constant such that the following inequality holds
\begin{equation}\label{eq:p01}
\|\mathcal A_{1,\gamma,\mathcal S }(f_1\sigma_1  ,f_2 \sigma_2 )\|_{L^p(w)}\le \mathcal N  \|f_1\|_{L^{p_1}(\sigma_1)} \|f_2\|_{L^{p_2}(\sigma_2)}.
\end{equation}
Then \eqref{eq:p01} is equivalent to the following inequality with $\mathcal N'\simeq \mathcal N^\gamma$
\begin{equation}\label{eq:gamma1}
 \Big\|\Big(\sum_{Q\in \mathcal S} \langle f_1   \rangle_Q^{\sigma_1} \langle   f_2  \rangle_Q^{\sigma_2} \langle \sigma_1\rangle_Q^\gamma \langle\sigma_2\rangle_Q^\gamma \mathbf 1_Q\Big)^{\frac 1\gamma} \Big\|_{L^p(w)}^\gamma\le {\mathcal N'}  \|f_1\|_{L^{\frac{p_1}\gamma}(\sigma_1)} \|f_2\|_{L^{\frac{p_2}\gamma}(\sigma_2)}.
\end{equation}
\end{Lemma}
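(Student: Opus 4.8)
The plan is to derive the two inequalities from one another by a homogeneity (scaling) substitution combined with Jensen's inequality, exploiting that \eqref{eq:p01} and \eqref{eq:gamma1} differ \emph{only} in the exponent attached to the averages $\langle f_i\rangle_Q^{\sigma_i}$ of the functions — the averages $\langle\sigma_i\rangle_Q$ of the weights already appear with the same exponent $\gamma$ in both, and so do the $\mathbf 1_Q$. First I would reduce to $f_1,f_2\ge 0$, since everything depends only on $|f_1|,|f_2|$. Next, using the elementary identity $\langle f_i\sigma_i\rangle_Q=\langle f_i\rangle_Q^{\sigma_i}\langle\sigma_i\rangle_Q$ together with $\|h^{1/\gamma}\|_{L^p(w)}^\gamma=\|h\|_{L^{p/\gamma}(w)}$ for $h\ge 0$, raising \eqref{eq:p01} to the power $\gamma$ turns its left-hand side into
\[
\|\mathcal A_{1,\gamma,\mathcal S}(f_1\sigma_1,f_2\sigma_2)\|_{L^p(w)}^\gamma=\Big\|\sum_{Q\in\mathcal S}\big(\langle f_1\rangle_Q^{\sigma_1}\big)^\gamma\big(\langle f_2\rangle_Q^{\sigma_2}\big)^\gamma\langle\sigma_1\rangle_Q^\gamma\langle\sigma_2\rangle_Q^\gamma\,\mathbf 1_Q\Big\|_{L^{p/\gamma}(w)} .
\]
Up to this cosmetic rewriting, \eqref{eq:p01} is thus exactly \eqref{eq:gamma1} except that each factor $\langle f_i\rangle_Q^{\sigma_i}$ carries exponent $\gamma$ rather than $1$, while the right-hand side features $\|f_i\|_{L^{p_i}(\sigma_i)}^\gamma$ in place of $\|f_i\|_{L^{p_i/\gamma}(\sigma_i)}$.

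To absorb the extra exponent I would apply the substitution $f_i\mapsto f_i^{1/\gamma}$ when passing from \eqref{eq:p01} to \eqref{eq:gamma1}, and $f_i\mapsto f_i^{\gamma}$ in the opposite direction; these are harmless because $\|f_i^{1/\gamma}\|_{L^{p_i}(\sigma_i)}^\gamma=\|f_i\|_{L^{p_i/\gamma}(\sigma_i)}$ and $\|f_i^{\gamma}\|_{L^{p_i/\gamma}(\sigma_i)}=\|f_i\|_{L^{p_i}(\sigma_i)}^\gamma$, so the two right-hand sides are matched. After the substitution the sole remaining discrepancy is between $\big(\langle f_i^{1/\gamma}\rangle_Q^{\sigma_i}\big)^\gamma$ and $\langle f_i\rangle_Q^{\sigma_i}$ (respectively between $\langle f_i^{\gamma}\rangle_Q^{\sigma_i}$ and $\big(\langle f_i\rangle_Q^{\sigma_i}\big)^\gamma$). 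I would close this gap cube by cube by Jensen's inequality applied to $t\mapsto t^{\gamma}$ against the probability measure $\sigma_i\,\mathbf 1_Q/\sigma_i(Q)$ on $Q\in\mathcal S$. Since the comparison of coefficients is pointwise in $Q$ and $\|\cdot\|_{L^{p/\gamma}(w)}$ is monotone on nonnegative functions, it passes to the norms, and tracking the power $\gamma$ through the substitution gives the stated relation $\mathcal N'\simeq\mathcal N^{\gamma}$ between the best constants.

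The one point that needs care — and the only real obstacle — is that the direction of Jensen's inequality reverses at $\gamma=1$. For $\gamma\ge 1$ one has $\langle h^{\gamma}\rangle_Q^{\sigma_i}\ge\big(\langle h\rangle_Q^{\sigma_i}\big)^{\gamma}$, which is precisely what is needed to run the implication \eqref{eq:gamma1}$\Rightarrow$\eqref{eq:p01}; this is the direction actually used in the sequel, since after the reduction to the case $p_0=1$ described above one is in the range $\gamma=\gamma/p_0\ge 1$ whenever $\gamma\ge p_0$. For $\gamma\le 1$ the reversed Jensen inequality instead yields \eqref{eq:p01}$\Rightarrow$\eqref{eq:gamma1}. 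Thus in carrying out the argument one must feed each substitution into the inequality for which the resulting Jensen estimate points in the favorable direction; no analytic input beyond Jensen's inequality and the homogeneity of the norms is required.
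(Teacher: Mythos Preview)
Your reduction and substitution are correct, and Jensen's inequality does give one implication in each regime: $\eqref{eq:gamma1}\Rightarrow\eqref{eq:p01}$ when $\gamma\ge 1$ (via $\big(\langle f_i\rangle_Q^{\sigma_i}\big)^\gamma\le\langle f_i^{\gamma}\rangle_Q^{\sigma_i}$), and $\eqref{eq:p01}\Rightarrow\eqref{eq:gamma1}$ when $\gamma\le 1$. But the lemma asserts \emph{equivalence}, i.e.\ $\mathcal N'\simeq\mathcal N^\gamma$, and you have not supplied the reverse implication in either regime. For instance, when $\gamma>1$ you still need $\eqref{eq:p01}\Rightarrow\eqref{eq:gamma1}$; after the substitution $f_i\mapsto f_i^{1/\gamma}$ this would require $\langle f_i\rangle_Q^{\sigma_i}\le\big(\langle f_i^{1/\gamma}\rangle_Q^{\sigma_i}\big)^\gamma$, and Jensen gives exactly the opposite. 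Your final paragraph does not close this gap --- it only observes that the direction you can prove is the one used downstream when $\gamma\ge p_0$. The theorems, however, also treat the case $\gamma<p_0$, in which after the $p_0$-reduction the effective $\gamma$ is $<1$ and the implication needed in the sequel, namely $\eqref{eq:gamma1}\Rightarrow\eqref{eq:p01}$, is precisely the one your Jensen argument does \emph{not} deliver.

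The paper's fix is to replace Jensen by the pointwise domination $\langle f_i\rangle_Q^{\sigma_i}\le M_{\mathcal D}^{\sigma_i}f_i(x)$ for $x\in Q$, which yields $\big(\langle f_i\rangle_Q^{\sigma_i}\big)^\gamma\le\big\langle (M_{\mathcal D}^{\sigma_i}f_i)^\gamma\big\rangle_Q^{\sigma_i}$ and, in the other direction, $\langle f_i\rangle_Q^{\sigma_i}\le\big(\big\langle (M_{\mathcal D}^{\sigma_i}f_i)^{1/\gamma}\big\rangle_Q^{\sigma_i}\big)^\gamma$, both valid for \emph{every} $\gamma>0$. One then applies the assumed inequality with $f_i$ replaced by the corresponding maximal variant and closes the estimate using the $L^{p_i}(\sigma_i)$-boundedness of $M_{\mathcal D}^{\sigma_i}$; this is exactly where the hypothesis $p>\gamma$ (hence $p_i>\gamma$) is used. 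That single device handles both implications uniformly in $\gamma$, which your Jensen argument cannot.
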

\begin{proof}
  On one hand, if \eqref{eq:gamma1} holds, we have
\begin{align*}
\|\mathcal A_{1,\gamma,\mathcal S }(f_1 \sigma_1 ,f_2 &\sigma_2 )\|_{L^p(w)} \\
&\le \Big\|\Big(\sum_{Q\in \mathcal S} \langle M_{\mathcal D}^{\sigma_1}(f_1 )^\gamma \rangle_Q^{\sigma_1} \langle M_{\mathcal D}^{\sigma_2}(f_2)^\gamma \rangle_Q^{\sigma_2} \langle \sigma_1\rangle_Q^\gamma \langle\sigma_2\rangle_Q^\gamma \mathbf 1_Q\Big)^{\frac 1\gamma} \Big\|_{L^p(w)}\\
&\lesssim\mathcal N  \| M_{\mathcal D}^{\sigma_1}(f_1 )^\gamma\|_{L^{p_1/\gamma}(\sigma_1)}^{1/\gamma}  \|M_{\mathcal D}^{\sigma_2}(f_2)^\gamma\|_{L^{p_2/\gamma}(\sigma_2)}^{1/\gamma}\\
&\le \mathcal N  \|f_1 \|_{L^{p_1}(\sigma_1)}   \|f_2\|_{L^{p_2}(\sigma_2)},
\end{align*}
where $M_{\mathcal{D}}^{\sigma}$ denotes the dyadic weighted maximal function, namely
\begin{equation}
  M_{\mathcal{D}}^{\sigma}(f)=\sup_{Q\in\mathcal{D}} \frac{1}{\sigma(Q)}\int_Q |f(x)|\sigma dx,
\end{equation}
which is bounded from $L^p(\sigma)$ into itself for every $p>1$.
On the other hand, if \eqref{eq:p01} holds, we have
\begin{align*}
&\Big\|\Big(\sum_{Q\in \mathcal S} \langle f_1   \rangle_Q^{\sigma_1} \langle   f_2  \rangle_Q^{\sigma_2} \langle \sigma_1\rangle_Q^\gamma \langle\sigma_2\rangle_Q^\gamma \mathbf 1_Q\Big)^{\frac 1\gamma} \Big\|_{L^p(w)}\\
&\le \Big\|\Big(\sum_{Q\in \mathcal S} (\langle M_{\gamma,\mathcal D}^{\sigma_1}(f_{1}^{1/\gamma})   \rangle_Q^{\sigma_1})^\gamma (\langle M_{\gamma,\mathcal D}^{\sigma_2}(f_{2}^{1/\gamma})   \rangle_Q^{\sigma_2})^\gamma \langle \sigma_1\rangle_Q^\gamma \langle\sigma_2\rangle_Q^\gamma \mathbf 1_Q\Big)^{\frac 1\gamma} \Big\|_{L^p(w)}\\
&\le \mathcal N \|  M_{\gamma,\mathcal D}^{\sigma_1}(f_{1}^{1/\gamma})    \|_{L^{p_1}(\sigma_1)} \|  M_{\gamma,\mathcal D}^{\sigma_2}(f_{2}^{1/\gamma})    \|_{L^{p_2}(\sigma_2)}\\
&\lesssim  \mathcal N \| f_{1}^{1/\gamma}    \|_{L^{p_1}(\sigma_1)} \|  f_{2}^{1/\gamma}    \|_{L^{p_2}(\sigma_2)},
\end{align*}
where $M_{\gamma,\mathcal{D}}^{\sigma}(f)=(M_{\mathcal{D}}^{\sigma}(f^{\gamma}))^{1/\gamma}$ and we have used in the last step that $p>\gamma$, which implies $p_1, p_2>\gamma$ and consequently, the boundedness of the maximal functions.
\end{proof}
Therefore, we further reduce the problem to study \eqref{eq:gamma1}. Finally, we
 give the following lemma, which is the key to prove Theorems \ref{Thm:1}, \ref{Thm:2} and \ref{Thm:3}.
\begin{Lemma}\label{lm:testing}
Let $\gamma>0$. Suppose that $1<p_1, p_2<\infty $ with $\frac{1}{p}=\frac{1}{p_1}+ \frac{1}{p_2}$. Let $w$ and $\vec\sigma$ be weights. Then for any sparse collection $\mathfrak S$,
\begin{equation}\label{eq:testing}
\Big\|\Big(\sum_{Q\in \mathfrak S} \langle \sigma_1\rangle_Q^\gamma \langle \sigma_2\rangle_Q^\gamma \mathbf 1_Q\Big)^{\frac 1\gamma}\Big\|_{L^p(w)}
\le [w,\vec \sigma]_{A_{\vec P}}^{\frac 1p}\Big( \sum_{Q\in \mathfrak S} \langle\sigma_1\rangle_Q^{\frac p{p_1}} \langle \sigma_2\rangle_Q^{\frac p{p_2}} |Q| \Big)^{1/p},
\end{equation}
and if $p>\gamma$, then there holds
\begin{equation}\begin{split}\label{eq:dualtesting}
\Big\| \sum_{Q\in \mathfrak S} \langle \sigma_1\rangle_Q^\gamma \langle \sigma_2 &\rangle_Q^{\gamma-1}\langle w\rangle_Q \mathbf 1_Q  \Big\|_{L^{(\frac{p_2}\gamma)'}(\sigma_2)} \\
&\le [w,\vec \sigma]_{A_{\vec P}}^{\frac \gamma p}\Big( \sum_{Q\in \mathfrak S} \langle \sigma_1\rangle_Q^{\frac {\gamma (\frac{p_2}{\gamma})'}{p_1}}\langle w\rangle_Q^{(\frac{p_2}\gamma)' (1-\frac \gamma p)} |Q|\Big)^{\frac 1{(\frac{p_2}\gamma)'}}
\end{split}\end{equation}
and
\begin{equation}\begin{split}\label{eq:dualtesting1}
\Big\| \sum_{Q\in \mathfrak S} \langle \sigma_1\rangle_Q^{\gamma-1} \langle \sigma_2 &\rangle_Q^{\gamma}\langle w\rangle_Q \mathbf 1_Q  \Big\|_{L^{(\frac{p_1}\gamma)'}(\sigma_1)}
\\
&\le [w,\vec \sigma]_{A_{\vec P}}^{\frac \gamma p}\Big( \sum_{Q\in \mathfrak S} \langle \sigma_2\rangle_Q^{\frac {\gamma (\frac{p_1}{\gamma})'}{p_2}}\langle w\rangle_Q^{(\frac{p_1}\gamma)' (1-\frac \gamma p)} |Q|\Big)^{\frac 1{(\frac{p_1}\gamma)'}}.
\end{split}\end{equation}
\end{Lemma}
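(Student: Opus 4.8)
\emph{Sketch of proof.} The plan is to derive all three inequalities from a single estimate for the sparse sum
\[
\Sigma(Q):=\sum_{\substack{Q'\in\mathfrak S\\ Q'\subseteq Q}}\langle\sigma_1\rangle_{Q'}^{\gamma}\langle\sigma_2\rangle_{Q'}^{\gamma}\,w(Q').
\]
First observe that when $p\le\gamma$ (so $p/\gamma\le1$) the bound \eqref{eq:testing} is immediate and requires no sparseness: by $\big(\sum_i t_i\big)^{p/\gamma}\le\sum_i t_i^{p/\gamma}$, the definition of $[w,\vec\sigma]_{A_{\vec P}}$ and the identity $p-p/p_i'=p/p_i$,
\[
\Big\|\Big(\sum_{Q\in\mathfrak S}\langle\sigma_1\rangle_Q^{\gamma}\langle\sigma_2\rangle_Q^{\gamma}\mathbf 1_Q\Big)^{1/\gamma}\Big\|_{L^p(w)}^{p}\le\sum_{Q\in\mathfrak S}\langle\sigma_1\rangle_Q^{p}\langle\sigma_2\rangle_Q^{p}\,w(Q)\le[w,\vec\sigma]_{A_{\vec P}}\sum_{Q\in\mathfrak S}\langle\sigma_1\rangle_Q^{p/p_1}\langle\sigma_2\rangle_Q^{p/p_2}|Q|.
\]
Thus from now on we may assume $p>\gamma$, which covers the rest of \eqref{eq:testing} and all of \eqref{eq:dualtesting}, \eqref{eq:dualtesting1}.

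Next I would apply Proposition~\ref{dyadicsum} in each case: to \eqref{eq:testing} with $s=p/\gamma$ and measure $w$, using $\|(\cdot)^{1/\gamma}\|_{L^p(w)}^{p}=\|\cdot\|_{L^{p/\gamma}(w)}^{p/\gamma}$; to \eqref{eq:dualtesting} with $s=(p_2/\gamma)'$ and measure $\sigma_2$; to \eqref{eq:dualtesting1} with $s=(p_1/\gamma)'$ and measure $\sigma_1$ (note $p_1,p_2>p>\gamma$, so both $(p_i/\gamma)'\in(1,\infty)$). A short computation then shows that in each case the ``Carleson average'' $\sigma(Q)^{-1}\int_Q\phi_Q\,\sigma$ produced by Proposition~\ref{dyadicsum} equals \emph{exactly} $\Sigma(Q)/\sigma(Q)$; for the two dual inequalities one uses $\langle\sigma_i\rangle_{Q'}^{\gamma-1}\sigma_i(Q')=\langle\sigma_i\rangle_{Q'}^{\gamma}|Q'|$. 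Hence each left-hand side, raised to the appropriate power, becomes up to a dimensional constant a sum over $Q\in\mathfrak S$ of a power of $\Sigma(Q)$ times averages of $\sigma_1,\sigma_2,w$ over $Q$ and a power of $|Q|$.

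The core step is the estimate for $\Sigma(Q)$. I would fix a parameter $\theta$ with $\gamma/p<\theta<\min\{1,\,p_1'\gamma/p,\,p_2'\gamma/p\}$, an interval that is non-empty precisely because $p_1,p_2>1$ and $p>\gamma$. Writing $\langle w\rangle_{Q'}=\langle w\rangle_{Q'}^{1-\theta}\langle w\rangle_{Q'}^{\theta}$ and estimating the factor $\langle w\rangle_{Q'}^{\theta}$ by the $A_{\vec P}$ condition gives
\[
\Sigma(Q)\le[w,\vec\sigma]_{A_{\vec P}}^{\theta}\sum_{\substack{Q'\in\mathfrak S\\ Q'\subseteq Q}}\langle\sigma_1\rangle_{Q'}^{\gamma-\theta p/p_1'}\langle\sigma_2\rangle_{Q'}^{\gamma-\theta p/p_2'}\langle w\rangle_{Q'}^{1-\theta}|Q'|.
\]
By the choice of $\theta$ the three exponents $\gamma-\theta p/p_1'$, $\gamma-\theta p/p_2'$ and $1-\theta$ are positive and sum to $2\gamma+1-2\theta p<1$, so a three-factor version of Proposition~\ref{kolmogorov} — proved by exactly the same argument (sparseness, Hölder, and Kolmogorov's inequality) — bounds the last sum, up to a constant, by $\langle\sigma_1\rangle_Q^{\gamma-\theta p/p_1'}\langle\sigma_2\rangle_Q^{\gamma-\theta p/p_2'}\langle w\rangle_Q^{1-\theta}|Q|$. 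Substituting this into the sums from the previous step, collecting the powers, and applying the $A_{\vec P}$ condition one final time to eliminate the single average that must not survive on the right-hand side ($\langle w\rangle_Q$ for \eqref{eq:testing}, $\langle\sigma_2\rangle_Q$ for \eqref{eq:dualtesting}, $\langle\sigma_1\rangle_Q$ for \eqref{eq:dualtesting1}), all the $\theta$-dependent contributions cancel and the exponents of $\langle\sigma_1\rangle_Q$, $\langle\sigma_2\rangle_Q$, $|Q|$ and of $[w,\vec\sigma]_{A_{\vec P}}$ collapse to exactly the ones in the statement.

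I expect the only genuinely delicate point to be this final bookkeeping: one has to check that the exponent fed into the second use of $A_{\vec P}$ is nonnegative, which forces $\theta$ to be taken close to $\gamma/p$ (still inside the admissible interval), and then verify that the exponent arithmetic really does make every $\theta$ disappear, leaving the constant $[w,\vec\sigma]_{A_{\vec P}}^{1/p}$ in \eqref{eq:testing} and $[w,\vec\sigma]_{A_{\vec P}}^{\gamma/p}$ in \eqref{eq:dualtesting}, \eqref{eq:dualtesting1}. Everything else — the case $p\le\gamma$, the identification of the Carleson average with $\Sigma(Q)/\sigma(Q)$, and the three-factor Kolmogorov estimate — is routine once Propositions~\ref{dyadicsum} and \ref{kolmogorov} are available.
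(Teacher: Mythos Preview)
Your proposal is correct and follows essentially the same architecture as the paper's proof: reduce the $L^s$-norm via Proposition~\ref{dyadicsum} to a Carleson-type sum, bound the inner sum $\Sigma(Q)$ using one application of $[w,\vec\sigma]_{A_{\vec P}}$ together with the sparse Kolmogorov estimate of Proposition~\ref{kolmogorov}, and then apply $[w,\vec\sigma]_{A_{\vec P}}$ once more to kill the unwanted average. The identification of the Carleson average with $\Sigma(Q)/\sigma(Q)$ via $\langle\sigma_i\rangle_{Q'}^{\gamma-1}\sigma_i(Q')=\langle\sigma_i\rangle_{Q'}^{\gamma}|Q'|$ is exactly what the paper does, and your observation that the case $p\le\gamma$ is trivial (no sparseness needed) is correct.

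The execution differs from the paper in two ways, both of which are streamlinings rather than new ideas. First, for \eqref{eq:testing} the paper exploits the $\gamma$-independence of the right-hand side to reduce to \emph{small} $\gamma$ (specifically $\gamma<\min\{p,1\}$ with $(p/\gamma)'<\max\{p_1,p_2\}$); this lets them absorb one of the three averages completely with a single $A_{\vec P}$ hit and then invoke Proposition~\ref{kolmogorov} as stated, with only two factors. You instead keep $\gamma$ general, introduce the free parameter $\theta$, and appeal to a three-factor version of Proposition~\ref{kolmogorov}; this is a genuine (and harmless) extension, proved by the same H\"older/Kolmogorov argument, and it lets you avoid the reduction. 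Second, for \eqref{eq:dualtesting}--\eqref{eq:dualtesting1} the paper splits into the cases $(p/\gamma)'\ge\max\{p_1,p_2\}$ and $(p/\gamma)'<\max\{p_1,p_2\}$, whereas your $\theta$-parametrization handles both at once. The price you pay is precisely the bookkeeping you flag at the end: verifying that the exponent fed into the second use of $A_{\vec P}$ is nonnegative forces $\theta$ down towards $\gamma/p$, and then the $\theta$-dependence cancels in the final powers. This does work out (and yields exactly the stated constants), so the delicate point you anticipate is real but routine.
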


\begin{proof}We start proving \eqref{eq:testing}. Following the spirit in \cite{HL}, observe that the right hand side of \eqref{eq:testing} is independent of $\gamma$. Therefore, it suffices to study the problem for small $\gamma$. More precisely, for fixed $\vec P$, we reduce the problem to study the case $\gamma< \min\{p,1\}$ with $(p/\gamma)'<\max\{p_1,p_2\}$. Without loss of generality, we may assume that $(p/\gamma)'<p_1=\max\{p_1,p_2\}$. Then it is easy to check that
\begin{equation}\label{eq:cond1}
0\le \gamma-\frac{\gamma p_1'}{p_2'}<1,\,\, 0\le 1-\frac{\gamma p_1'}p<1,
\end{equation}
and
\begin{equation}\label{eq:cond2}
 \gamma -\frac{\gamma p_1'}{p_2'}+ 1-\frac{\gamma p_1'}p<1.
\end{equation}
By Proposition \ref{dyadicsum}, we have
\begin{align*}
&\Big\| \Big(\sum_{Q\in \mathfrak S} \langle \sigma_1\rangle_Q^\gamma\langle\sigma_2\rangle_Q^\gamma \mathbf 1_Q\Big)^{1/\gamma}\Big\|_{L^p(w)}\\
&\eqsim \Big(\sum_{Q\in \mathfrak S} \lambda_Q \Big(\frac{1}{w(Q)}\sum_{Q'\subset Q}\langle \sigma_1\rangle_{Q'}^\gamma \langle\sigma_2\rangle_{Q'}^\gamma w(Q')\Big)^{\frac p\gamma-1}\Big)^{\frac 1p}\\
&\lesssim [w,\vec \sigma]_{A_{\vec P}}^{\frac {(p-\gamma)p_1'}{p^2}} \Big(\sum_{Q\in \mathfrak S} \lambda_Q \Big(\frac{1}{w(Q)}\sum_{Q'\subset Q}  \langle\sigma_2\rangle_{Q'}^{\gamma(1-\frac {p_1'}{p_2'})} \langle w\rangle_{Q'}^{1-\frac {p_1'\gamma}p} |Q'|\Big)^{\frac p\gamma-1}\Big)^{\frac 1p}\\
&\overset{\eqref{eq:kolmogorov}}{\lesssim} [w,\vec \sigma]_{A_{\vec P}}^{\frac {(p-\gamma)p_1'}{p^2}} \Big(\sum_{Q\in \mathfrak S} \lambda_Q \Big(\frac{1}{w(Q)} \langle\sigma_2\rangle_{Q }^{\gamma(1-\frac {p_1'}{p_2'})} \langle w\rangle_{Q }^{1-\frac {p_1'\gamma}p} |Q|\Big)^{\frac p\gamma-1}\Big)^{\frac 1p}\\
&= [w,\vec \sigma]_{A_{\vec P}}^{\frac {(p-\gamma)p_1'}{p^2}} \Big(\sum_{Q\in \mathfrak S} \langle \sigma_1\rangle_Q^{\gamma} \langle\sigma_2\rangle_Q^{\gamma+ (1-\frac {p_1'}{p_2'})(p-\gamma)}  \langle w\rangle_Q^{1-\frac{p_1'(p-\gamma)}p} |Q|\Big)^{1/p}\\
&\lesssim [w,\vec \sigma]_{A_{\vec P}}^{\frac {(p-\gamma)p_1'}{p^2}+\frac 1p-\frac {(p-\gamma)p_1'}{p^2}} \Big( \sum_{Q\in \mathfrak S} \langle\sigma_1\rangle_Q^{\frac p{p_1}} \langle \sigma_2\rangle_Q^{\frac p{p_2}} |Q| \Big)^{1/p}\\
&= [w,\vec \sigma]_{A_{\vec P}}^{\frac 1p } \Big( \sum_{Q\in \mathfrak S} \langle\sigma_1\rangle_Q^{\frac p{p_1}} \langle \sigma_2\rangle_Q^{\frac p{p_2}} |Q| \Big)^{1/p},
\end{align*}
where $\lambda_Q=\langle \sigma_1\rangle_Q^\gamma \langle\sigma_2\rangle_Q^\gamma w(Q)$. By symmetry, we only need to prove \eqref{eq:dualtesting}. Let us consider the case $(p/\gamma)'\ge \max\{p_1,p_2\}$ and $(p/\gamma)'< \max\{p_1,p_2\}$ separately. For the case $(p/\gamma)'<\max\{p_1,p_2\}$, without loss of generality, we may assume that $p_1>p_2$. Again, having into account \eqref{eq:cond1} and \eqref{eq:cond2} and using Proposition \ref{dyadicsum}, we obtain
 \begin{align*}
& \Big\| \sum_{Q\in \mathfrak S} \langle \sigma_1\rangle_Q^\gamma \langle \sigma_2\rangle_Q^{\gamma-1}\langle w\rangle_Q \mathbf 1_Q  \Big\|_{L^{(\frac{p_2}\gamma)'}(\sigma_2)}\\
&\simeq \Big( \sum_{Q\in \mathfrak S} \lambda_Q   \Big( \frac 1{\sigma_2(Q)}\sum_{Q'\subset Q}\langle \sigma_1\rangle_{Q'}^\gamma \langle \sigma_2\rangle_{Q'}^{\gamma } w(Q')   \Big)^{(\frac{p_2}\gamma)'-1}              \Big)^{\frac 1{(\frac{p_2}\gamma)'}}\\
&\le [w,\vec\sigma]_{A_{\vec P}}^{\frac {p_1'\gamma^2}{pp_2}}\Big( \sum_{Q\in \mathfrak S} \lambda_Q   \Big( \frac 1{\sigma_2(Q)}\sum_{Q'\subset Q}  \langle \sigma_2\rangle_{Q'}^{\gamma(1-\frac{p_1'}{p_2'}) }  \langle w\rangle_{Q'}^{  1-\frac{\gamma p_1'}{p}}   |Q'| \Big)^{(\frac{p_2}\gamma)'-1}              \Big)^{\frac 1{(\frac{p_2}\gamma)'}}\\
&\overset{\eqref{eq:kolmogorov}}{\lesssim} [w,\vec\sigma]_{A_{\vec P}}^{\frac {p_1'\gamma^2}{pp_2}}\Big( \sum_{Q\in \mathfrak S} \lambda_Q  \Big( \frac 1{\sigma_2(Q)}   \langle \sigma_2\rangle_{Q}^{\gamma(1-\frac{p_1'}{p_2'}) }  \langle w\rangle_{Q}^{  1-\frac{\gamma p_1'}{p}}  |Q|   \Big)^{(\frac{p_2}\gamma)'-1}              \Big)^{\frac 1{(\frac{p_2}\gamma)'}}\\
&=\hspace{-0.10cm} [w,\vec\sigma]_{A_{\vec P}}^{\frac {p_1'\gamma^2}{pp_2}}\hspace{-0.10cm} \Big( \hspace{-0.10cm} \sum_{Q\in \mathfrak S}\langle \sigma_1\rangle_Q^\gamma \langle \sigma_2\rangle_Q^{\gamma(\frac{p_2}{\gamma})'-(\frac{\gamma p_1'}{p_2'}+1)((\frac{p_2}\gamma)'-1) } \hspace{-0.15cm}  \langle w\rangle_Q^{(\frac {p_2}\gamma)'-\frac{\gamma p_1'}p((\frac {p_2}\gamma)'-1)}\hspace{-0.05cm} |Q|  \Big)^{\frac 1{(\frac{p_2}\gamma)'}}\\
&\le [w,\vec \sigma]_{A_{\vec P}}^{\frac \gamma p}\Big(\sum_{Q\in \mathfrak S} \langle \sigma_1\rangle_Q^{\frac {\gamma (\frac{p_2}{\gamma})'}{p_1}}\langle w\rangle_Q^{(\frac{p_2}\gamma)' (1-\frac \gamma p)} |Q|\Big)^{\frac 1{(\frac{p_2}\gamma)'}},
 \end{align*}
where $\lambda_Q=\langle \sigma_1\rangle_Q^\gamma \langle\sigma_2\rangle_Q^\gamma w(Q)$. It remains to consider when $(p/\gamma)'\ge \max\{p_1,p_2\}$. In this case,
 \[
   \gamma-\frac p{p_1'}\ge 0,\,\, \gamma-\frac p{p_2'}\ge 0.
 \]
 Moreover, since we are considering the case $p>\gamma$,
 \[
 \gamma-\frac p{p_1'}+\gamma-\frac p{p_2'}=2\gamma -2p+1<1.
 \]
 Applying Proposition \ref{dyadicsum} again, we have
  \begin{align*}
& \Big\| \sum_{Q\in \mathfrak S} \langle \sigma_1\rangle_Q^\gamma \langle \sigma_2\rangle_Q^{\gamma-1}\langle w\rangle_Q \mathbf 1_Q  \Big\|_{L^{(\frac{p_2}\gamma)'}(\sigma_2)}\\
&\simeq \Big( \sum_{Q\in \mathfrak S} \lambda_Q  \Big( \frac 1{\sigma_2(Q)}\sum_{Q'\subset Q}\langle \sigma_1\rangle_{Q'}^\gamma \langle \sigma_2\rangle_{Q'}^{\gamma } w(Q')   \Big)^{(\frac{p_2}\gamma)'-1}              \Big)^{\frac 1{(\frac{p_2}\gamma)'}}\\
&\le [w,\vec\sigma]_{A_{\vec P}}^{\frac {\gamma}{p_2}}\Big( \sum_{Q\in \mathfrak S} \lambda_Q   \Big( \frac 1{\sigma_2(Q)}\sum_{Q'\subset Q} \langle \sigma_1\rangle_{Q'}^{\gamma-\frac p{p_1'}} \langle \sigma_2\rangle_{Q'}^{\gamma-\frac p{p_2'} }    |Q'| \Big)^{(\frac{p_2}\gamma)'-1}              \Big)^{\frac 1{(\frac{p_2}\gamma)'}}\\
&\overset{\eqref{eq:kolmogorov}}{\lesssim} [w,\vec\sigma]_{A_{\vec P}}^{\frac { \gamma }{ p_2}}\Big( \sum_{Q\in \mathfrak S} \lambda_Q  \Big( \frac 1{\sigma_2(Q)}  \langle \sigma_1\rangle_{Q }^{\gamma-\frac p{p_1'}} \langle \sigma_2\rangle_{Q}^{\gamma-\frac p{p_2'} }    |Q|   \Big)^{(\frac{p_2}\gamma)'-1}              \Big)^{\frac 1{(\frac{p_2}\gamma)'}}\\
&= [w,\vec\sigma]_{A_{\vec P}}^{\frac { \gamma }{ p_2}} \Big( \sum_{Q\in \mathfrak S}\langle \sigma_1\rangle_Q^{\frac \gamma{p_2-\gamma}(p_2-\frac p{p_1'}) } \langle \sigma_2\rangle_Q^{\frac 1{p_2-\gamma}(p_2(1-\gamma)-\frac {p\gamma}{p_2'}) }   \langle w\rangle_Q  |Q|  \Big)^{\frac 1{(\frac{p_2}\gamma)'}}\\
&\le [w,\vec \sigma]_{A_{\vec P}}^{\frac \gamma p}\Big( \sum_{Q\in \mathfrak S} \langle \sigma_1\rangle_Q^{\frac {\gamma (\frac{p_2}{\gamma})'}{p_1}}\langle w\rangle_Q^{(\frac{p_2}\gamma)' (1-\frac \gamma p)} |Q|\Big)^{\frac 1{(\frac{p_2}\gamma)'}}.
 \end{align*}
where again $\lambda_Q=\langle \sigma_1\rangle_Q^\gamma \langle\sigma_2\rangle_Q^\gamma w(Q)$.
\end{proof}

Now we are ready to prove our main results.

\begin{proof}[Proof of Theorem~\ref{Thm:1}]
First we consider the case $p>\gamma$, and we denote $q=p/\gamma$. By Lemma \ref{lm:equivalent}, we have
\begin{align*}
& \Big\|\Big(\sum_{Q\in \mathcal S} \langle f_1   \rangle_Q^{\sigma_1} \langle   f_2  \rangle_Q^{\sigma_2} \langle \sigma_1\rangle_Q^\gamma \langle\sigma_2\rangle_Q^\gamma \mathbf 1_Q\Big)^{\frac 1\gamma} \Big\|_{L^p(w)}^\gamma \\&=\sup_{\|h\|_{L^{q'}(w)}=1}  \sum_{Q\in \mathcal S} \langle f_1   \rangle_Q^{\sigma_1} \langle   f_2  \rangle_Q^{\sigma_2} \langle \sigma_1\rangle_Q^\gamma \langle\sigma_2\rangle_Q^\gamma  \int_Q h \dw\\
&= \sup_{\|h\|_{L^{q'}(w)}=1} \sum_{Q\in \mathcal S}  \langle f_1\rangle_Q^{\sigma_1}   \langle f_2\rangle_Q^{\sigma_2}  \langle h\rangle_Q^w \langle \sigma_1\rangle_Q^\gamma \langle\sigma_2\rangle_Q^\gamma w(Q)
\end{align*}


For each $i=1,2$, let $\mathcal{F}_i$ be the stopping family starting at $Q_0$ and defined by the stopping condition

\begin{equation*}
  \ch_{\mathcal{F}_i}(\mathit{F_i}):=\{\mathit{F}'_i\in\mathcal{S} : \text{$F_i'\subset F_i$ maximal such that $\langle f_i \rangle^{\sigma_i}_{F'_i}>2 \langle f_i \rangle^{\sigma_i}_{F_i}$}\}.
\end{equation*}

Each collection $\mathcal{F}_i$ is $\sigma_i$-sparse, since
\begin{equation*}
  \sum_{F_i'\in \ch_{\mathcal{F}_i}(\mathit{F_i})} \sigma_i(F_i')\leq \frac{1}{2} \frac{\sum_{F_i'\in \ch_{\mathcal{F}_i}(\mathit{F_i})} \int_{F_i'} f d\sigma}{\int_{F_i} f d\sigma} \sigma_i(F_i) \leq \frac{1}{2}  \sigma_i(F_i).
\end{equation*}

The $\mathcal{F}_i$-stopping parent $\pi_{\mathcal{F}_i}(Q)$ of a cube $Q$ is defined by
$
\pi_{\mathcal{F}_i}(Q):=\{F_i \in\mathcal{F}_i : \text{$F_i$ minimal such that $F_i\supseteq Q$}\}.
$
By the stopping condition, for every cube $Q$ we have $\langle f_i \rangle^{\sigma_i}_{Q}\leq 2 \langle f_i \rangle^{\sigma_i}_{\pi_{\mathcal{F}_i}(Q)}$. Let $\mathcal{H}$ be the analogue stopping family associated with $h$ and the weight $w$, verifying the corresponding properties.

By rearranging the summation according to the stopping parents and removing the supremum, we obtain

\begin{align*}
&\sum_{Q\in \mathcal S}  \langle f_1\rangle_Q^{\sigma_1} \langle f_2\rangle_Q^{\sigma_2}  \langle h\rangle_Q^w \langle \sigma_1\rangle_Q^\gamma \langle\sigma_2\rangle_Q^\gamma w(Q)\\
&= \Bigg( \sum_{F_1\in{\mathcal F}_1} \sum_{\substack{F_2\in {\mathcal F}_2\\ F_2\subset F_1}} \sum_{\substack{H\in \mathcal H\\ H\subset F_2}}\sum_{\substack{Q \in\mathcal S\\ \pi(Q)=(F_1,F_2, H)}} + \sum_{F_2\in {\mathcal F}_2} \sum_{\substack{F_1\in {\mathcal F}_1\\ F_1\subset F_2}} \sum_{\substack{H\in \mathcal H\\ H\subset F}}\sum_{\substack{Q \in\mathcal S\\ \pi(Q)=(F_1,F_2, H)}}   \\
&+\sum_{F_1\in {\mathcal F}_1} \sum_{\substack{H\in \mathcal H\\ H\subset F_1}} \sum_{\substack{F_2\in {\mathcal F}_2\\ F_2\subset H}}\sum_{\substack{Q \in\mathcal S\\ \pi(Q)=(F_1,F_2, H)}} +\sum_{F_2\in{\mathcal F}_2} \sum_{\substack{H\in \mathcal H\\ H\subset F_2}} \sum_{\substack{F_1\in {\mathcal F}_1\\ F_1\subset H}}\sum_{\substack{Q \in\mathcal S\\ \pi(Q)=(F_1,F_2, H)}}\\
&+\sum_{H\in\mathcal H} \sum_{\substack{F_1\in {\mathcal F}_1\\ F_1\subset H}} \sum_{\substack{F_2\in {\mathcal F}_2\\ F_2\subset F_1}}\sum_{\substack{Q \in\mathcal S\\ \pi(Q)=(F_1,F_2, H)}} + \sum_{H\in\mathcal H} \sum_{\substack{F_2\in {\mathcal F}_2\\ F_2\subset H}} \sum_{\substack{F_1\in {\mathcal F}_1\\ F_1\subset F_2}}\sum_{\substack{Q \in\mathcal S\\ \pi(Q)=(F_1,F_2, H)}}\Bigg) \\ &\times \langle f_1\rangle_Q^{\sigma_1} \langle f_2\rangle_Q^{\sigma_2} \langle h\rangle_Q^w\lambda_Q\\
&:= I+I'+II+II'+III+III',
\end{align*}
where $\pi(Q)$ means that $\pi_{\mathcal{F}_i}(Q)=F_i$, for all $i=1,2$ and $\pi_{\mathcal{H}}(Q)=H$ and
\[
\lambda_Q:=  \langle \sigma_1\rangle_Q^\gamma \langle\sigma_2\rangle_Q^\gamma w(Q).
\]
By symmetry, it suffices to give an estimate for $I$. We have
\begin{align*}
I&\le \sum_{F_1\in{\mathcal F}_1} \sum_{\substack{F_2\in {\mathcal F}_2\\ F_2\subset F_1}} \sum_{\substack{H\in \mathcal H\\ H\subset F_2}}\sum_{\substack{Q \in\mathcal S\\ \pi(Q)=(F_1,F_2, H)}} \langle f_1\rangle_Q^{\sigma_1} \langle f_2\rangle_Q^{\sigma_2}  \langle h\rangle_Q^w \lambda_Q\\
&\le 8\sum_{F_1\in{\mathcal F}_1}  \langle f_1\rangle_{F_1}^{\sigma_1}  \sum_{\substack{F_2\in {\mathcal F}_2\\ F_2\subset F_1}}  \langle f_2\rangle_{F_2}^{\sigma_2} \sum_{\substack{H\in \mathcal H\\ H\subset F_2}} \langle h\rangle_H^w\sum_{\substack{Q \in\mathcal S\\ \pi(Q)=(F_1,F_2, H)}} \lambda_Q\\
&\lesssim  \sum_{F_1\in{\mathcal F}_1}  \langle f_1\rangle_{F_1}^{\sigma_1}  \sum_{\substack{F_2\in {\mathcal F}_2\\ F_2\subset F_1}}  \langle f_2\rangle_{F_2}^{\sigma_2}   \int \Big(\sup_{\substack{H'\in \mathcal H\\ \pi_{{\mathcal F}_2}(H')=F_2} } \langle h\rangle_{H'}^w \mathbf 1_{H'} \Big) \\ &\times\sum_{\substack{H\in \mathcal H\\ H\subset F_2}} \sum_{\substack{Q \in\mathcal S\\ \pi(Q)=(F_1,F_2, H)}} \frac{\lambda_Q}{w(Q)} \mathbf 1_Q\dw\\
&\le \sum_{F_1\in{\mathcal F}_1}  \langle f_1\rangle_{F_1}^{\sigma_1}  \sum_{\substack{{F_2}\in {\mathcal F}_2\\ \pi_{{\mathcal F}_1}(F_2)= F_1}}  \langle f_2\rangle_{F_2}^{\sigma_2}   \Big\|\sum_{\substack{H\in \mathcal H\\ \pi_{{\mathcal F}_2}(H)= F_2}} \sum_{\substack{Q \in\mathcal S\\ \pi(Q)=(F_1,F_2, H)}}  \frac{\lambda_Q}{w(Q)}\mathbf 1_Q \Big\|_{L^q(w)}\\
&\qquad\times \Big\|\sup_{\substack{H'\in \mathcal H\\ \pi_{{\mathcal F}_2}(H')=F_2} } \langle h\rangle_{H'}^w \mathbf 1_{H'}  \Big\|_{L^{q'}(w)}\\
&\leq \Big(\hspace{-0.2cm}\sum_{F_1\in{\mathcal F}_1}  \sum_{\substack{F_2\in {\mathcal F}_2\\ \pi_{{\mathcal F}_2}(F_2)= F_1}}  \hspace{-0.5cm}(\langle f_1\rangle_{F_1}^{\sigma_1} \langle f_2\rangle_{F_2}^{\sigma_2} )^q  \Big\|\hspace{-0.3cm}\sum_{\substack{H\in \mathcal H\\ \pi_{{\mathcal F}_2}(H)= F_2}} \sum_{\substack{Q \in\mathcal S\\ \pi(Q)=(F_1,F_2, H)}} \hspace{-0.7cm} \frac{\lambda_Q}{w(Q)} \mathbf 1_Q \Big\|_{L^q(w)}^q\Big)^{1/q}\\
&\qquad \times \Big(\sum_{F_1\in{\mathcal F}_1}  \sum_{\substack{F_2\in {\mathcal F}_2\\\pi_{{\mathcal F}_1}(F_2)= F_1}} \sum_{\substack{H'\in \mathcal H\\ \pi_{{\mathcal F}_2}(H')=F_2} } (\langle h\rangle_{H'}^w)^{q'} w(H') \Big )^{1/{q'}}\\
&\lesssim \hspace{-0.1cm}\Big(\sum_{F_1\in{\mathcal F}_1} \hspace{-0.2cm} \sum_{\substack{F_2\in {\mathcal F}_2\\ \pi_{{\mathcal F}_1}(F_2)= F_1}}  \hspace{-0.5cm}(\langle f_1\rangle_{F_1}^{\sigma_1} \langle f_2\rangle_{F_2}^{\sigma_2} )^q  \Big\|\hspace{-0.2cm}\sum_{\substack{H\in \mathcal H\\ \pi_{\mathcal F_2}(H)= F_2}} \sum_{\substack{Q \in\mathcal S\\ \pi(Q)=(F_1,F_2, H)}} \hspace{-0.7cm}\frac{\lambda_Q}{w(Q)} \mathbf 1_Q \Big\|_{L^q(w)}^q\Big)^{1/q}.
\end{align*}
By \eqref{eq:testing}, we have
\begin{align*}
  \Big\| \sum_{\substack{Q\in \mathcal S\\ \pi_{\mathcal F_2}(Q)= F_2}}\frac{\lambda_Q}{w(Q)}\mathbf 1_Q\Big\|_{L^q(w)}
 &\le [w,\vec \sigma]_{A_{\vec P}}^{\frac \gamma p}\Big( \sum_{\substack{Q\in \mathcal S\\ \pi_{\mathcal F_2}(Q)= F_2}} \langle\sigma_1\rangle_Q^{\frac p{p_1}} \langle \sigma_2\rangle_Q^{\frac p{p_2}} |Q| \Big)^{\gamma/p}.
\end{align*}


Therefore,
\begin{align*}
I&\le [w,\vec\sigma]_{A_{\vec{P}}}^{\frac \gamma p}  \Big(   \sum_{F_1\in{\mathcal F}_1}  \sum_{\substack{F_2\in {\mathcal F}_2\\ \pi_{{\mathcal F}_1}(F_2)= F_1}}  (\langle f_1\rangle_{F_1}^{\sigma_1} \langle f_2\rangle_{F_2}^{\sigma_2} )^q     \sum_{\substack{Q\in \mathcal S\\ \pi_{\mathcal F_2}(Q)= F_2}} \langle\sigma_1\rangle_Q^{\frac p{p_1}} \langle \sigma_2\rangle_Q^{\frac p{p_2}} |Q|    \Big)^{1/q}\\
&\le [w,\vec\sigma]_{A_{\vec{P}}}^{\frac \gamma p}  \Big(   \sum_{F_1\in{\mathcal F}_1}  \sum_{\substack{F_2\in {\mathcal F}_2\\ \pi_{{\mathcal F}_1}(F_2)= F_1}}  (\langle f_1\rangle_{F_1}^{\sigma_1} \langle f_2\rangle_{F_2}^{\sigma_2} )^q     \\
&\times\Big(\sum_{\substack{Q\in \mathcal S\\ \pi_{\mathcal F_2}(Q)= F_2}} \langle\sigma_1\rangle_Q |Q|\Big)^{\frac p{p_1}}  \Big(\sum_{\substack{Q\in \mathcal S\\ \pi_{\mathcal F_2}(Q)= F_2}} \langle\sigma_2\rangle_Q |Q|\Big)^{\frac p{p_2}}   \Big)^{1/q}\\
&\le [w,\vec\sigma]_{A_{\vec{P}}}^{\frac \gamma p}  \Big( \sum_{F_1\in{\mathcal F}_1} (\langle f_1\rangle_{F_1}^{\sigma_1})^{p_1/\gamma}\sum_{\substack{F_2\in {\mathcal F}_2\\ \pi_{{\mathcal F}_1}(F_2)= F_1}} \sum_{\substack{Q\in \mathcal S\\ \pi_{\mathcal F_2}(Q)= F_2}} \langle\sigma_1\rangle_Q |Q|  \Big)^{\frac \gamma{p_1}}\\
&\times \Big( \sum_{F_2\in{\mathcal F}_2} (\langle f_2\rangle_{F_2}^{\sigma_2})^{p_2/\gamma} \sum_{\substack{Q\in \mathcal S\\ \pi_{\mathcal F_2}(Q)= F_2}} \langle\sigma_2\rangle_Q |Q|  \Big)^{\frac \gamma{p_2}}\\
&\le [w,\vec\sigma]_{A_{\vec{P}}}^{\frac \gamma p}(\prod_{i=1}^2 [\sigma_i]_{A_\infty}^{\frac \gamma{p_i}}) \|    f_1 \|_{L^{p_1/\gamma}(\sigma_1)}\|f_2\|_{L^{p_2/\gamma}(\sigma_2)}.
\end{align*}
It remains to consider the case $p\le \gamma$. By Lemma \ref{lm:equivalent}, we have
\begin{align*}
 &\Big\|\Big(\sum_{Q\in \mathcal S}  \langle f_1\rangle_Q^{\sigma_1}   \langle f_2\rangle_Q^{\sigma_2}   \langle\sigma_1\rangle_Q^\gamma \langle\sigma_2\rangle_Q^\gamma \mathbf 1_Q\Big)^{\frac 1\gamma} \Big\|_{L^p(w)}^\gamma\\
&\lesssim  \Big\|\Big(\sum_{F_1\in \mathcal F_1}\langle f_1\rangle_{F_1}^{\sigma_1}\sum_{F_2\in \mathcal F_2}\langle f_2\rangle_{F_2}^{\sigma_2}\sum_{\substack{Q\in \mathcal S\\ \pi(Q)=(F_1, F_2)}}        \langle\sigma_1\rangle_Q^\gamma \langle\sigma_2\rangle_Q^\gamma \mathbf 1_Q\Big)^{\frac 1\gamma}\Big\|_{L^p(w)}^\gamma\\
&\le  \Big(\sum_{F_1\in \mathcal F_1}(\langle f_1\rangle_{F_1}^{\sigma_1})^q\sum_{F_2\in \mathcal F_2}(\langle f_2\rangle_{F_2}^{\sigma_2})^q \Big\|\sum_{\substack{Q\in \mathcal S\\ \pi(Q)=(F_1, F_2)}}        \langle\sigma_1\rangle_Q^\gamma \langle\sigma_2\rangle_Q^\gamma \mathbf 1_Q\Big\|_{L^q(w)}^q\Big)^{\frac 1q}\\
&\lesssim \Big( \sum_{F_1\in \mathcal F_1}(\langle f_1\rangle_{F_1}^{\sigma_1})^q\sum_{\substack{F_2\in \mathcal F_2\\ F_2\subset F_1}}(\langle f_2\rangle_{F_2}^{\sigma_2})^q \Big\|\sum_{\substack{Q\in \mathcal S\\ \pi(Q)=(F_1, F_2)}}        \langle\sigma_1\rangle_Q^\gamma \langle\sigma_2\rangle_Q^\gamma \mathbf 1_Q\Big\|_{L^q(w)}^q\Big)^{\frac 1q}\\
& +\Big(\sum_{F_2\in \mathcal F_2}(\langle f_2\rangle_{F_2}^{\sigma_2})^q \sum_{\substack{F_1\in \mathcal F_1\\F_1\subset F_2}}(\langle f_1\rangle_{F_1}^{\sigma_1})^q\Big\|\sum_{\substack{Q\in \mathcal S\\ \pi(Q)=(F_1, F_2)}}        \langle\sigma_1\rangle_Q^\gamma \langle\sigma_2\rangle_Q^\gamma \mathbf 1_Q\Big\|_{L^q(w)}^q\Big)^{\frac 1q}.
\end{align*}
Then by the previous arguments, the desired estimate follows. This completes the proof.
\end{proof}
The proof of Theorem~\ref{Thm:2} follows the same idea as the proof of the previous theorem.
\begin{proof}[Proof of Theorem \ref{Thm:2}]
We only check the estimate for $I$, since the other terms are similar. By \eqref{eq:testing}, we have
 \begin{align*}
   \Big\| \sum_{\substack{Q\in \mathcal S\\Q\subset F_2}} \langle \sigma_1\rangle_Q^\gamma\langle\sigma_2\rangle_Q^\gamma \mathbf 1_Q\Big\|_{L^q(w)}
 &\le [w,\vec \sigma]_{A_{\vec P}}^{\frac \gamma p}\Big( \sum_{\substack{Q\in \mathcal S\\Q\subset F_2}} \langle\sigma_1\rangle_Q^{\frac p{p_1}} \langle \sigma_2\rangle_Q^{\frac p{p_2}} |Q| \Big)^{\gamma/p}\\
 &\lesssim [w,\vec \sigma]_{A_{\vec P}}^{\frac \gamma p} \Big( \int_{F_2} \prod_{i=1}^2 M(\mathbf 1_{F_2}\sigma_i)^{p/{p_i}}dx               \Big)^{\gamma/p}\\
 &\le [w,\vec \sigma]_{A_{\vec P}}^{\frac \gamma p}[\vec \sigma]_{W_{\vec P}^\infty}^{\gamma/p} \Big( \int_{F_2} \prod_{i=1}^2 \sigma_i^{p/{p_i}}dx               \Big)^{\gamma/p}.
 \end{align*}
 Therefore,
  \begin{align*}
 I&\le [w,\vec\sigma]_{A_{\vec{P}}}^{\frac \gamma p} [\vec \sigma]_{W_{\vec P}^\infty}^{\gamma/p}\Big(   \sum_{F_1\in{\mathcal F}_1}  \sum_{\substack{F_2\in {\mathcal F}_2\\ \pi_{{\mathcal F}_1}(F_2)= F_1}}  (\langle f_1\rangle_{F_1}^{\sigma_1} \langle f_2\rangle_{F_2}^{\sigma_2} )^q      \int_{F_2} \prod_{i=1}^2 \sigma_i^{p/{p_i}}dx               \Big)^{1/q}\\
 &\le  \hspace{-0.05cm}[w,\vec\sigma]_{A_{\vec{P}}}^{\frac \gamma p} [\vec \sigma]_{W_{\vec P}^\infty}^{\gamma/p}\Big( \hspace{-0.10cm}         \int  \sum_{F_1\in{\mathcal F}_1} \hspace{-0.20cm}   (\langle f_1\rangle_{F_1}^{\sigma_1} )^q \mathbf 1_{F_1} \hspace{-0.20cm}   \sum_{\substack{F_2\in {\mathcal F}_2\\ \pi_{{\mathcal F}_1}(F_2)= F_1}} \hspace{-0.20cm}   (\langle f_2\rangle_{F_2}^{\sigma_2} )^q  \mathbf 1_{F_2}  \prod_{i=1}^2 \sigma_i^{p/{p_i}}dx               \Big)^{1/q}\\
 &\le [w,\vec\sigma]_{A_{\vec{P}}}^{\frac \gamma p}[\vec \sigma]_{W_{\vec P}^\infty}^{\gamma/p}\Big(          \int  M_{\mathcal D}^{\sigma_1} (f_1)^q  M_{\mathcal D}^{\sigma_2}(f_2)^q   \prod_{i=1}^2 \sigma_i^{p/{p_i}}dx               \Big)^{1/q}\\
 &\le [w,\vec\sigma]_{A_{\vec{P}}}^{\frac \gamma p} [\vec \sigma]_{W_{\vec P}^\infty}^{\gamma/p}  \| M_{\mathcal D}^{\sigma_1} (f_1)\|_{L^{p_1/\gamma}(\sigma_1)}\cdot \| M_{\mathcal D}^{\sigma_2} (f_2)\|_{L^{p_2/\gamma}(\sigma_2)}\\
 &\lesssim [w,\vec\sigma]_{A_{\vec{P}}}^{\frac \gamma p} [\vec \sigma]_{W_{\vec P}^\infty}^{\gamma /p} \| f_1\|_{L^{p_1/\gamma}(\sigma_1)}\cdot \|f_2\|_{L^{p_2/\gamma}(\sigma_2)}.
 \end{align*}

\end{proof}

Again, the proof of Theorem~\ref{Thm:3} also follows the same idea as the proof of the previous theorem.

\begin{proof}[Proof of Theorem~\ref{Thm:3}]
Likewise,  we only  study the estimate of $I$.
By \eqref{eq:testing} again, we have
\begin{align*}
& \Big\| \sum_{\substack{Q\in \mathcal S\\ \pi(Q)=F_2}} \langle \sigma_1\rangle_Q^\gamma\langle\sigma_2\rangle_Q^\gamma \mathbf 1_Q\Big\|_{L^q(w)}\\
 &\le [w,\vec \sigma]_{A_{\vec P}}^{\frac \gamma p}\Big( \sum_{\substack{Q\in \mathcal S\\ \pi(Q)=F_2}} \langle\sigma_1\rangle_Q^{\frac p{p_1}} \langle \sigma_2\rangle_Q^{\frac p{p_2}} |Q| \Big)^{\gamma/p}\\
&\le [w,\vec \sigma]_{A_{\vec P}}^{\frac \gamma p}[\vec \sigma]_{H_{\vec P}^\infty}^{\frac \gamma p}\Big( \sum_{\substack{Q\in \mathcal S\\ \pi(Q)=F_2}} \prod_{i=1}^2 \exp\Big( \dashint_Q \log \sigma_i  \Big)^{\frac p{p_i}} |Q| \Big)^{\gamma /p}\\
&\le [w,\vec \sigma]_{A_{\vec P}}^{\frac \gamma p}[\vec \sigma]_{H_{\vec P}^\infty}^{\frac \gamma p}\prod_{i=1}^2 \Big( \sum_{\substack{Q\in \mathcal S\\ \pi(Q)=F_2}} \exp\Big( \dashint_Q \log \sigma_i  \Big)  |Q| \Big)^{\gamma/{p_i}}\\
&\lesssim [w,\vec \sigma]_{A_{\vec P}}^{\frac \gamma p}[\vec \sigma]_{H_{\vec P}^\infty}^{\frac \gamma p}  \Big( \sum_{\substack{Q\in \mathcal S\\ \pi(Q)=F_2}} \exp\Big( \dashint_Q \log \sigma_1  \Big)  |Q| \Big)^{\gamma /{p_1}}\|M_0 (\mathbf 1_{F_2}\sigma_2)\|_{L^1}^{\frac \gamma {p_2}}\\
&\le [w,\vec \sigma]_{A_{\vec P}}^{\frac \gamma p}[\vec \sigma]_{H_{\vec P}^\infty}^{\frac \gamma p} \sigma_2(F)^{\frac \gamma {p_2}} \Big( \sum_{\substack{Q\in \mathcal S\\ \pi(Q)=F_2}} \exp\Big( \dashint_Q \log \sigma_1  \Big)  |Q| \Big)^{\gamma /{p_1}},
\end{align*}
where
\begin{equation}
M_0 (f) := \sup_Q \exp{\left( \dashint_Q \log{|f|}\right)}\mathbf 1_Q,
\end{equation}
is the logarithmic maximal function. Here we have used the fact that this maximal function is bounded from $L^p$ into itself for $p\in(0,\infty)$ with bound independent of the dimension in the dyadic case as proved in \cite[Lemma 2.1]{HP}.
Hence,
\begin{align*}
I&\le [w,\vec\sigma]_{A_{\vec{P}}}^{\frac \gamma p} [\vec \sigma]_{H_{\vec P}^\infty}^{\gamma /p}\Big(   \sum_{F_1\in{\mathcal F}_1} (\langle f_1\rangle_{F_1}^{\sigma_1})^q \sum_{\substack{F_2\in {\mathcal F}_2\\ \pi_{{\mathcal F}_1}(F_2)= F_1}}  ( \langle f_2\rangle_{F_2}^{\sigma_2} )^q \sigma_2(F)^{\frac p{p_2}}   \\
 &\quad\times \Big( \sum_{\substack{Q\in \mathcal S\\ \pi(Q)=F_2}} \exp\Big( \dashint_Q \log \sigma_1  \Big)  |Q| \Big)^{p/{p_1}}\Big)^{\frac \gamma p} \\
 &\le [w,\vec\sigma]_{A_{\vec{P}}}^{\frac \gamma p} [\vec \sigma]_{H_{\vec P}^\infty}^{\gamma/p}\Big(   \sum_{F_1\in{\mathcal F}_1} (\langle f_1\rangle_{F_1}^{\sigma_1})^q \Big(\sum_{\substack{F_2\in {\mathcal F}_2\\ \pi_{{\mathcal F}_1}(F_2)= F_1}}  ( \langle f_2\rangle_{F_2}^{\sigma_2} )^{p_2/\gamma} \sigma_2(F)\Big)^{\frac p{p_2}}\\
 &\quad\times \Big( \sum_{\substack{F_2\in {\mathcal F}_2\\ \pi_{{\mathcal F}_1}(F_2)= F_1}}\sum_{\substack{Q\in \mathcal S\\ \pi(Q)=F_2}} \exp\Big( \dashint_Q \log \sigma_1  \Big)  |Q| \Big)^{p/{p_1}}\Big)^{\frac \gamma p}\\
 &\le \hspace{-0.05cm}   [w,\vec\sigma]_{A_{\vec{P}}}^{\frac \gamma p} [\vec \sigma]_{H_{\vec P}^\infty}^{\gamma/p}\Big(   \sum_{F_1\in{\mathcal F}_1} (\langle f_1\rangle_{F_1}^{\sigma_1})^{p_1/\gamma} \Big(\hspace{-0.3cm}   \sum_{\substack{F_2\in {\mathcal F}_2\\ \pi_{{\mathcal F}_1}(F_2)= F_1}}\sum_{\substack{Q\in \mathcal S\\\pi(Q)=F_2}} \exp\Big( \dashint_Q \log \sigma_1  \Big)  |Q| \Big) \Big)^{\frac \gamma{p_1}}\\
 &\quad\times \Big(\sum_{F_1\in{\mathcal F}_1}\sum_{\substack{F_2\in {\mathcal F}_2\\ \pi_{{\mathcal F}_1}(F_2)= F_1}}  ( \langle f_2\rangle_{F_2}^{\sigma_2} )^{p_2/\gamma} \sigma_2(F)\Big)^{\frac \gamma{p_2}}\\
 &\le [w,\vec\sigma]_{A_{\vec{P}}}^{\frac \gamma p} [\vec \sigma]_{H_{\vec P}^\infty}^{\gamma/p} \|f_1\|_{L^{p_1/\gamma}(\sigma_1)}\|f_2\|_{L^{p_2/\gamma}(\sigma_2)}.
\end{align*}

\end{proof}

\section{Applications}\label{Sect:6}

\subsection{Mixed $A_p$-$A_\infty$ estimate for commutators of multilinear Calder\'on--Zygmund operators}\label{Sect:6.1}

Throughout this section, we will work with commutators of multilinear Calder\'on-- Zygmund operators with symbols in $BMO$. Recall that $BMO$ consists of all locally integrable functions $b$ with $||b||_{BMO}<\infty$, where
\begin{equation*}
\|b\|_{BMO}:= \sup_Q\frac 1{|Q|}\int_Q |b(y)-\langle b\rangle_Q| dy,
\end{equation*}
and the supremum in the above definition is taken over all cubes $Q\in \bbR^n$ with sides parallel to the axes.

Given a multilinear Calder\'on--Zygmund operator $T$ and $\vec{b}\in BMO^m$, we consider the following commutators with $\vec{b}$,
\[
[\vec b, T]=\sum_{i=1}^m [\vec b, T]_i,
\]
where
\[
[\vec b, T]_i(\vec f):= b_iT(\vec f)- T(f_1,\cdots, f_{i-1}, b_if_i, f_{i+1}, \cdots, f_m).
\]

Our aim in this section is to prove the following mixed estimate for commutators of multilinear Calder\'on-Zygmund operators following the same spirit as in \cite{CPP}.

\begin{Theorem}\label{Thm:comm}
Let $T$ be a multilinear Calder\'on-Zygmund operator and $\vec b\in BMO^m$. If we assume that $[w,\vec\sigma]_{A_{\vec P}}<\infty$, then
\begin{align*}
\|[\vec b, T&]\|_{L^{p_1}(w_1)\times\cdots\times L^{p_m}(w_m)\rightarrow L^p(w)} \\
&\le [w,\vec\sigma]_{A_{\vec P}}^{\frac 1p}(\prod_{i=1}^m [\sigma_i]_{A_{\infty}}^{\frac 1{p_i}}+[w]_{A_\infty}^{\frac 1{p'}}\sum_{j=1}^m\prod_{i\neq j}[\sigma_i]_{A_\infty}^{\frac 1{p_i}})\\&\times([w]_{A_\infty}+ \sum_{i=1}^m [\sigma_i]_{A_\infty})\left(\sum_{i=1}^m\|b_i\|_{BMO}\right),
\end{align*}
where $\sigma_i=w_i^{1-p_i'}$, $i=1,\ldots,m$.
\end{Theorem}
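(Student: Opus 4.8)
The plan is to run the Coifman--Rochberg--Weiss conjugation method, in the quantitative form of \cite{CPP}, reducing $[\vec b,T]$ to a single analytic operator family and then invoking the weighted bound for multilinear Calder\'on--Zygmund operators obtained by combining the $m$-linear analogue of the domination Theorem~\ref{Thm:domination} with the $m$-linear analogue of Theorem~\ref{Thm:1}. Since $[\vec b,T]=\sum_{i=1}^m[\vec b,T]_i$, it suffices to bound each $[\vec b,T]_i$ and sum. Fix $i$ and write $b=b_i$. For $z\in\mathbb{C}$ put $F(z):=e^{zb}\,T(f_1,\dots,f_{i-1},e^{-zb}f_i,f_{i+1},\dots,f_m)$, so that $F(0)=T(\vec f)$ and $F'(0)=[\vec b,T]_i(\vec f)$. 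First assuming $b\in L^\infty$ and the $f_j$ bounded with compact support (the general case will follow at the end by a routine density argument with uniform constants), $z\mapsto F(z)$ is $L^p(w)$-valued analytic, and Cauchy's integral formula gives, for every $r>0$,
\[
\|[\vec b,T]_i(\vec f)\|_{L^p(w)}=\Big\|\tfrac{1}{2\pi i}\oint_{|z|=r}\tfrac{F(z)}{z^2}\,dz\Big\|_{L^p(w)}\lesssim \tfrac1r\,\sup_{|z|=r}\|F(z)\|_{L^p(w)}.
\]

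Next I would rewrite the right-hand norm with perturbed weights. Since $|F(z)(x)|=e^{\re(z)b(x)}|T(f_1,\dots,e^{-zb}f_i,\dots)(x)|$, one has $\|F(z)\|_{L^p(w)}=\|T(g_1,\dots,g_m)\|_{L^p(W)}$, where $g_i:=e^{-zb}f_i$, $g_j:=f_j$ for $j\neq i$, and $W:=e^{p\re(z)b}w$. Put $W_i:=e^{p_i\re(z)b}w_i$, $W_j:=w_j$ for $j\neq i$, and $\Sigma_j:=W_j^{1-p_j'}$, so that $\Sigma_i=e^{-p_i'\re(z)b}\sigma_i$ and $\Sigma_j=\sigma_j$ otherwise, and so that the choice of $W_i$ forces $\|g_i\|_{L^{p_i}(W_i)}=\|f_i\|_{L^{p_i}(w_i)}$. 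Applying the $m$-linear version of Theorem~\ref{Thm:domination} (which gives $|T(\vec g)|\le T_\sharp(\vec g)\lesssim\sum_u\mathcal{A}_{\mathcal S^u}(|\vec g|)$ pointwise) together with the $m$-linear version of Theorem~\ref{Thm:1} in the case $p_0=\gamma=1$ yields
\[
\|T(\vec g)\|_{L^p(W)}\lesssim [W,\vec\Sigma]_{A_{\vec P}}^{1/p}\Big(\prod_{j}[\Sigma_j]_{A_\infty}^{1/p_j}+[W]_{A_\infty}^{1/p'}\sum_{k}\prod_{j\neq k}[\Sigma_j]_{A_\infty}^{1/p_j}\Big)\prod_j\|f_j\|_{L^{p_j}(w_j)},
\]
provided the weights involved satisfy the hypotheses of Theorem~\ref{Thm:1}, which is settled in the next step.

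The crucial step is controlling the perturbed weights. Here I would use the standard consequence of the sharp reverse H\"older inequality and the John--Nirenberg inequality: if $v\in A_\infty$, $\psi\in BMO$ and $|\beta|\,\|\psi\|_{BMO}\,[v]_{A_\infty}\le c_n$, then $e^{\beta\psi}v\in A_\infty$ with $[e^{\beta\psi}v]_{A_\infty}\le 2[v]_{A_\infty}$ and $\langle e^{\beta\psi}v\rangle_Q\le 2\,e^{\beta\langle\psi\rangle_Q}\langle v\rangle_Q$ for every cube $Q$. Applying this with $v=w$, $\beta=p\re(z)$ and with $v=\sigma_i$, $\beta=-p_i'\re(z)$ — legitimate once we restrict to $r\le c_{n,\vec P}\big(\|b\|_{BMO}([w]_{A_\infty}+[\sigma_i]_{A_\infty})\big)^{-1}$, which we may do since the theorem is vacuous unless $[w]_{A_\infty}$ and $[\sigma_i]_{A_\infty}$ are finite — gives $[W]_{A_\infty}\lesssim[w]_{A_\infty}$, $[\Sigma_j]_{A_\infty}\lesssim[\sigma_j]_{A_\infty}$ for all $j$, and an exact cancellation of the exponential corrections,
\[
\langle W\rangle_Q\prod_j\langle\Sigma_j\rangle_Q^{p/p_j'}\lesssim e^{p\re(z)\langle b\rangle_Q}\,e^{-p\re(z)\langle b\rangle_Q}\,\langle w\rangle_Q\prod_j\langle\sigma_j\rangle_Q^{p/p_j'}\le [w,\vec\sigma]_{A_{\vec P}},
\]
so that $[W,\vec\Sigma]_{A_{\vec P}}\lesssim[w,\vec\sigma]_{A_{\vec P}}$, all uniformly for $|z|=r$. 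Inserting the two displayed inequalities into the Cauchy bound and choosing $r$ equal to the largest admissible value, so that $1/r\eqsim\|b_i\|_{BMO}([w]_{A_\infty}+[\sigma_i]_{A_\infty})$, bounds $\|[\vec b,T]_i(\vec f)\|_{L^p(w)}$ by the claimed right-hand side with $\sum_i\|b_i\|_{BMO}$ replaced by $\|b_i\|_{BMO}$; summing over $i$ and using $\sum_i([w]_{A_\infty}+[\sigma_i]_{A_\infty})\|b_i\|_{BMO}\le\big([w]_{A_\infty}+\sum_i[\sigma_i]_{A_\infty}\big)\sum_i\|b_i\|_{BMO}$ completes the proof.

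The hard part will be precisely this perturbation step: one must run the reverse-H\"older/John--Nirenberg argument with exactly the right quantitative threshold $r\,\|b\|_{BMO}\eqsim([w]_{A_\infty}+[\sigma_i]_{A_\infty})^{-1}$ so that (i) $W$ and every $\Sigma_j$ remain in $A_\infty$ with only doubled constants and (ii) the two competing exponential factors $e^{p\re(z)\langle b\rangle_Q}$ and $e^{-p_i'(p/p_i')\re(z)\langle b\rangle_Q}$ cancel identically inside the $A_{\vec P}$ supremum; it is exactly this threshold that produces the extra factor $[w]_{A_\infty}+\sum_i[\sigma_i]_{A_\infty}$ in the final estimate. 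A secondary point worth verifying is that the weighted bound for $m$-linear Calder\'on--Zygmund operators used above is available with a \emph{free} target weight $W$ rather than only with $W=\nu_{\vec W}$, which is precisely why Theorem~\ref{Thm:1} was stated in its general two-weight-type form.
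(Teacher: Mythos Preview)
Your proof is correct and follows essentially the same route as the paper: the Cauchy integral/conjugation trick of \cite{CPP}, followed by the mixed bound of Theorem~\ref{Thm:1} (via the domination theorem) applied with the perturbed weights $(we^{pb\re z},\sigma_1,\dots,\sigma_ie^{-p_i'b\re z},\dots,\sigma_m)$, and finally controlling the perturbed constants through the sharp reverse H\"older inequality and John--Nirenberg. The only cosmetic difference is in the $A_{\vec P}$ perturbation step: where you observe directly that the exponential corrections $e^{p\re(z)\langle b\rangle_Q}$ and $e^{-p_i'(p/p_i')\re(z)\langle b\rangle_Q}$ cancel inside the supremum, the paper packages the same computation as the bound $[e^{pbr'\re z}]_{A_{1+p/p_i'}}\le\beta_n^{1+p/p_i'}$ (Lemma~\ref{lm:prodweight} via Proposition~\ref{prop:cpp}); the underlying H\"older-plus-reverse-H\"older estimate is identical.
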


Before proving our main result in this section we need to recall some basic properties about $BMO$ functions and $A_{\infty}$ weights that we are going to use in the sequel.  Recall that a key property of $BMO$ functions is the celebrated John-Nirenberg inequality \cite{JN}.

\begin{Proposition}\cite[pp. 31-32]{J}
There are dimensional constants $0<\alpha_n <1 <\beta_n<\infty$ such that
\begin{equation}\label{eq:johnnirenberg}
\sup_Q \frac 1{|Q|}\int_Q \exp\Big( \frac{\alpha_n}{\|b\|_{\textup{BMO}}} |b(y)-\langle b\rangle_Q|\Big) dy\le \beta_n.
\end{equation}
In fact, we can take $\alpha_n=\frac 1{2^{n+2}}$.
\end{Proposition}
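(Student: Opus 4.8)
The plan is to deduce \eqref{eq:johnnirenberg} from the classical \emph{exponential decay of the distribution function} of a $BMO$ function, which itself comes from a Calder\'on--Zygmund stopping-time decomposition performed on each cube and then iterated in the oscillation level. By homogeneity we may assume $\|b\|_{\textup{BMO}}=1$. For $\lambda>0$ put
\[
F(\lambda):=\sup_Q \frac1{|Q|}\bigl|\{y\in Q:\ |b(y)-\langle b\rangle_Q|>\lambda\}\bigr|,
\]
the supremum taken over all cubes; trivially $F(\lambda)\le 1$. The crux of the argument is the recursion
\[
F(\lambda)\le \tfrac1e\,F(\lambda-2^n e)\qquad(\lambda>2^n e).
\]

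To prove it, I would fix a cube $Q$ and apply the Calder\'on--Zygmund decomposition to $g:=b-\langle b\rangle_Q$ with respect to the dyadic subcubes of $Q$ at height $a=e$: let $\{Q_j\}$ be the maximal dyadic subcubes of $Q$ with $\langle|g|\rangle_{Q_j}>e$. Three standard facts follow: (i) $\sum_j|Q_j|\le \frac1e\int_Q|g|\le \frac1e\|b\|_{\textup{BMO}}|Q|=\frac1e|Q|$, where the strict gain below $1$ uses $a>1$ and the normalization; (ii) $|g|\le e$ a.e.\ off $\bigcup_jQ_j$, by Lebesgue differentiation; (iii) $|\langle b\rangle_{Q_j}-\langle b\rangle_Q|\le\langle|g|\rangle_{Q_j}\le 2^n\langle|g|\rangle_{\widehat Q_j}\le 2^n e$, comparing $Q_j$ with its dyadic parent $\widehat Q_j$ inside $Q$. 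By (ii), for $\lambda>2^n e$ the set $\{y\in Q:\ |g(y)|>\lambda\}$ is contained, up to a null set, in $\bigcup_j\{y\in Q_j:\ |b(y)-\langle b\rangle_{Q_j}|>\lambda-2^n e\}$, where (iii) has been used to absorb the shift of averages. Summing measures, invoking (i), and taking the supremum over $Q$ yields the recursion.

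Iterating the recursion against the trivial bound $F\le 1$ gives $F(\lambda)\le e^{-\lfloor\lambda/2^n e\rfloor}\le e\cdot e^{-c_n\lambda}$ with $c_n:=\dfrac1{2^n e}$. The conclusion is then the layer-cake formula: for $0<\alpha<c_n$,
\[
\frac1{|Q|}\int_Q e^{\alpha|b-\langle b\rangle_Q|}\,dy
=1+\alpha\int_0^\infty e^{\alpha\lambda}\,\frac1{|Q|}\bigl|\{|b-\langle b\rangle_Q|>\lambda\}\bigr|\,d\lambda
\le 1+e\,\alpha\int_0^\infty e^{-(c_n-\alpha)\lambda}\,d\lambda
=1+\frac{e\,\alpha}{c_n-\alpha}.
\]
Since $\alpha_n=2^{-n-2}<2^{-n}e^{-1}=c_n$ (because $e<4$), one may take $\alpha=\alpha_n$ and $\beta_n:=1+\dfrac{e\,\alpha_n}{c_n-\alpha_n}$, which is finite, and undoing the normalization gives \eqref{eq:johnnirenberg} for general $b$ with exactly the claimed value of $\alpha_n$.

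The only real obstacle is quantitative bookkeeping: the stopping height $a$ must be chosen so that $a>1$ (so the selected cubes occupy at most the fraction $1/a<1$ of $|Q|$, which is what forces genuine geometric decay) \emph{and} so that the resulting rate $c_n=\dfrac{\log a}{2^n a}$ beats the target $\alpha_n=2^{-n-2}$; the dimensional loss $2^n$ in passing from a dyadic cube to its parent is precisely what makes these two requirements compete, and $a=e$ (the maximizer of $\tfrac{\log a}{a}$) is the clean choice that settles both. The Calder\'on--Zygmund decomposition, the Lebesgue differentiation step off the stopping cubes, and the layer-cake computation are otherwise entirely routine.
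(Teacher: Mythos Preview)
Your proof is correct and is the standard Calder\'on--Zygmund stopping-time argument for the John--Nirenberg inequality. Note, however, that the paper does not supply its own proof of this proposition: it is simply quoted from Journ\'e's lecture notes \cite[pp.~31--32]{J} as a known result, so there is no ``paper's proof'' to compare against. Your argument is essentially the classical one found in that reference (and in most textbooks), with the stopping height $a=e$ chosen to optimize the decay rate $c_n=(\log a)/(2^n a)$ and thereby accommodate the stated value $\alpha_n=2^{-(n+2)}$.
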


It is well-known that if $w\in A_\infty$, then $\log w\in BMO$. Using the John-Nirenberg inequality, Chung, Pereyra, and P\'erez \cite{CPP} proved the following bound.

\begin{Proposition}\label{prop:cpp}
Let $b \in BMO$ and let $0<\alpha_n <1 <\beta_n<\infty$  be the dimensional constants from \eqref{eq:johnnirenberg}. Then
\[
s\in \bbR,\,\, |s|\le \frac {\alpha_n}{\|b\|_{BMO}}\min\{1, \frac 1{p-1}\}\Rightarrow e^{sb}\in A_p\,\,\mbox{and}\,\, [e^{sb}]_{A_p}\le \beta_n^p.
\]
\end{Proposition}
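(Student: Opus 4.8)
The plan is to test the $A_p$ condition directly on an arbitrary cube $Q$ and reduce both averaging integrals to a single application of the John--Nirenberg bound \eqref{eq:johnnirenberg} by recentering $b$ at its mean over $Q$. Writing $w=e^{sb}$, note that $w^{1-p'}=e^{-\frac{s}{p-1}b}$, so that
\[
[w]_{A_p}=\sup_Q\langle e^{sb}\rangle_Q\,\langle e^{-\frac{s}{p-1}b}\rangle_Q^{p-1}.
\]
First I would fix $Q$, set $b_Q=\langle b\rangle_Q$, and factor $e^{sb_Q}$ out of the first average and $e^{-\frac{s}{p-1}b_Q}$ out of the second. The scalar prefactors cancel exactly, since $e^{sb_Q}\bigl(e^{-\frac{s}{p-1}b_Q}\bigr)^{p-1}=e^{sb_Q}e^{-sb_Q}=1$, leaving
\[
\langle e^{sb}\rangle_Q\,\langle e^{-\frac{s}{p-1}b}\rangle_Q^{p-1}=\langle e^{s(b-b_Q)}\rangle_Q\,\langle e^{-\frac{s}{p-1}(b-b_Q)}\rangle_Q^{p-1}.
\]

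Next I would estimate each factor separately. Using $e^{s(b-b_Q)}\le e^{|s|\,|b-b_Q|}$ together with the hypothesis $|s|\le\frac{\alpha_n}{\|b\|_{BMO}}$ and monotonicity of the exponential, \eqref{eq:johnnirenberg} gives $\langle e^{s(b-b_Q)}\rangle_Q\le\langle e^{\frac{\alpha_n}{\|b\|_{BMO}}|b-b_Q|}\rangle_Q\le\beta_n$. Likewise, the same hypothesis forces $\frac{|s|}{p-1}\le\frac{\alpha_n}{\|b\|_{BMO}}$, so the identical argument yields $\langle e^{-\frac{s}{p-1}(b-b_Q)}\rangle_Q\le\beta_n$. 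Multiplying, $\langle e^{sb}\rangle_Q\,\langle e^{-\frac{s}{p-1}b}\rangle_Q^{p-1}\le\beta_n\cdot\beta_n^{p-1}=\beta_n^p$, and taking the supremum over all cubes $Q$ gives $[e^{sb}]_{A_p}\le\beta_n^p<\infty$, hence $e^{sb}\in A_p$.

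This is essentially a one-step reduction, so there is no genuine obstacle. The only point requiring a moment's care is checking that the constraint $|s|\le\frac{\alpha_n}{\|b\|_{BMO}}\min\{1,\frac1{p-1}\}$ simultaneously bounds both exponents $|s|$ and $\frac{|s|}{p-1}$ by $\frac{\alpha_n}{\|b\|_{BMO}}$, which is immediate from the definition of the minimum; this is precisely why both of the two factors above fall under the scope of \eqref{eq:johnnirenberg} with the same constant $\beta_n$.
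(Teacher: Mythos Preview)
Your proof is correct and is precisely the standard argument. Note, however, that the paper does not supply its own proof of this proposition: it is quoted as a known result from Chung, Pereyra and P\'erez \cite{CPP}, so there is no in-paper proof to compare against. Your recentering at $b_Q$, cancellation of the scalar factors, and two applications of the John--Nirenberg inequality \eqref{eq:johnnirenberg} (one for the exponent $|s|$ and one for $|s|/(p-1)$, both controlled by $\alpha_n/\|b\|_{BMO}$ thanks to the $\min$) reproduce exactly the proof given in \cite{CPP}.
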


In \cite{HP}, Hyt\"onen and P\'erez also showed the following bound for the Fujii-Wilson $A_{\infty}$ constant of a particular family of weights.

\begin{Proposition}\label{ainfty}
There are dimensional constants $\varepsilon_n$ and $c_n$ such that
\[
[e^{\re zb} w]_{A_\infty}\le c_n [w]_{A_\infty} \qquad \mbox{if}\,\, |z|\le \frac {\varepsilon_n}{\|b\|_{BMO} [w]_{A_\infty}}.
\]
\end{Proposition}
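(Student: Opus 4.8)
This proposition is established in \cite{HP}; the plan is to reprove it by a reverse Hölder argument. Write $s=\re z$, so that $|s|\le\varepsilon_n/(\|b\|_{BMO}[w]_{A_\infty})$, and set $u:=e^{sb}w$. The strategy is to show that $u$ satisfies a reverse Hölder inequality with exponent on the scale $1+c_n/[w]_{A_\infty}$ and a purely dimensional constant, and then to read off the bound on the Fujii--Wilson constant $[u]_{A_\infty}$ from the two-sided sharp reverse Hölder characterisation of $A_\infty$.

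First I would record, via the sharp reverse Hölder inequality, a dimensional $\tau_n$ with $\ave{w^{r_w}}_Q^{1/r_w}\le 2\ave{w}_Q$ for all cubes $Q$, where $r_w=1+1/(\tau_n[w]_{A_\infty})$. Next, fixing $\rho=1+1/(2\tau_n[w]_{A_\infty})$, so $1<\rho<r_w$, and choosing conjugate exponents $a,a'$ with $\rho a'=r_w$ (whence $a=r_w/(r_w-\rho)\le C_n[w]_{A_\infty}$ and $\rho\le2$), I would apply Hölder to $\ave{u^\rho}_Q=\ave{e^{\rho sb}w^\rho}_Q$: the $w$-factor is handled by the reverse Hölder inequality above, and the exponential factor by recentering $b$ about $\ave{b}_Q$ and applying John--Nirenberg \eqref{eq:johnnirenberg} --- legitimate because $|\rho s a|\le C_n\varepsilon_n/\|b\|_{BMO}\le\alpha_n/\|b\|_{BMO}$ once $\varepsilon_n$ is chosen small. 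This gives
\[
\ave{u^{\rho}}_Q^{1/\rho}\le 2\beta_n\, e^{s\ave{b}_Q}\ave{w}_Q .
\]
For the matching lower bound I would write $u=e^{s\ave{b}_Q}e^{s(b-\ave{b}_Q)}w$ and use Jensen's inequality with respect to $w\,dx$ on $Q$ to get
\[
\ave{u}_Q\ge e^{s\ave{b}_Q}\ave{w}_Q\exp\Big(\tfrac{s}{w(Q)}\textstyle\int_Q(b-\ave{b}_Q)\,w\Big);
\]
then, by Hölder with exponents $(r_w',r_w)$, the reverse Hölder inequality for $w$, and the $L^{r_w'}$ form of John--Nirenberg (with $r_w'=1+\tau_n[w]_{A_\infty}\le C_n[w]_{A_\infty}$), the $w$-weighted mean oscillation obeys $w(Q)^{-1}\int_Q|b-\ave{b}_Q|\,w\le C_n[w]_{A_\infty}\|b\|_{BMO}$, so the exponent above is at most $C_n|s|[w]_{A_\infty}\|b\|_{BMO}\le C_n\varepsilon_n\le\log 2$, yielding $\ave{u}_Q\ge\tfrac12 e^{s\ave{b}_Q}\ave{w}_Q$.

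Combining the last three displays gives $\ave{u^{\rho}}_Q^{1/\rho}\le 4\beta_n\,\ave{u}_Q$ for every $Q$, i.e. a reverse Hölder inequality for $u$ with exponent $\rho=1+1/(2\tau_n[w]_{A_\infty})$ and dimensional constant $4\beta_n$. Finally I would trade this constant for a dimensional loss in the exponent and invoke the converse implication of the sharp reverse Hölder characterisation --- which itself follows from the bound $\|M\|_{L^t\to L^t}\lesssim t'$ applied with $t=\rho$ --- to conclude $[u]_{A_\infty}\lesssim\tau_n[w]_{A_\infty}$, which is the claim with a dimensional $c_n$. The delicate step is the lower bound on $\ave{u}_Q$: bounding the $w$-weighted oscillation of $b$ by $[w]_{A_\infty}\|b\|_{BMO}$ is what forces the smallness threshold $\varepsilon_n/(\|b\|_{BMO}[w]_{A_\infty})$, and it is the place where the reverse Hölder inequality for $w$ and John--Nirenberg must be combined.
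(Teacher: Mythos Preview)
The paper does not prove Proposition~\ref{ainfty}; it merely quotes it from \cite{HP} (Hyt\"onen--P\'erez). So there is no ``paper's own proof'' to compare against beyond the citation.

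Your argument is correct and is essentially the Hyt\"onen--P\'erez proof: combine the sharp reverse H\"older inequality for $w$ (Proposition~\ref{prop:reverseholder}) with John--Nirenberg to obtain a reverse H\"older inequality for $u=e^{sb}w$ at scale $1+c_n/[w]_{A_\infty}$, and then read off the Fujii--Wilson bound from $\|M\|_{L^\rho\to L^\rho}\lesssim\rho'$. The two nontrivial ingredients---the upper bound $\ave{u^\rho}_Q^{1/\rho}\le 2\beta_n e^{s\ave{b}_Q}\ave{w}_Q$ via H\"older plus John--Nirenberg, and the lower bound $\ave{u}_Q\ge\tfrac12 e^{s\ave{b}_Q}\ave{w}_Q$ via Jensen with respect to $w\,dx$ and the estimate $w(Q)^{-1}\int_Q|b-\ave{b}_Q|\,w\le C_n[w]_{A_\infty}\|b\|_{BMO}$---are both handled correctly, and you rightly identify the latter as the step that forces the factor $[w]_{A_\infty}$ in the smallness threshold for $|z|$. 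One minor remark: the ``trade the constant for a loss in the exponent'' step is not actually needed, since the direct computation $\int_Q M(u\mathbf 1_Q)\le c_n\rho'|Q|\ave{u^\rho}_Q^{1/\rho}\le c_n(4\beta_n)\rho'\,u(Q)$ already gives $[u]_{A_\infty}\le c_n'\,\rho'\lesssim[w]_{A_\infty}$ with a dimensional constant.
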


For our purpose, we need to show the following variation of the previous lemmas.

\begin{Lemma}\label{lm:prodweight}
Suppose that $[w,\vec \sigma]_{A_{\vec P}}<\infty$ and $w, \sigma_i\in A_\infty$, $i=1,2\cdots, m$. Then for any $1\le j\le m$,
\[
[we^{pb\re z}, \sigma_1,\cdots, \sigma_j e^{-p_j'b\re z}, \cdots, \sigma_m]_{A_{\vec {P}}}\le c_{n,\vec P} [w,\vec \sigma]_{A_{\vec P}},
\]
provided that
\[
|z|\le \frac {\alpha_n\min\{1, \frac{p_1'}p,\cdots, \frac{p_m'}p\}}{p(1+\max\{[w]_{A_\infty}, [\sigma_1]_{A_\infty},\cdots, [\sigma_m]_{A_\infty}\})\|b\|_{BMO}}.
\]
\end{Lemma}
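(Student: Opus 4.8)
The plan is to bound, uniformly over cubes $Q$, the perturbed functional
\[
\mathcal I_Q:=\langle we^{pb\re z}\rangle_Q\,\langle\sigma_j e^{-p_j'b\re z}\rangle_Q^{p/p_j'}\prod_{i\neq j}\langle\sigma_i\rangle_Q^{p/p_i'}
\]
by a constant depending only on $n$ and $\vec P$ times $[w,\vec\sigma]_{A_{\vec P}}$; since $[\cdot]_{A_{\vec P}}$ is a supremum over cubes, this is all that is needed. The first step is to strip off the mean of $b$ on $Q$: writing $e^{sb}=e^{s(b-\langle b\rangle_Q)}e^{s\langle b\rangle_Q}$ with $s=p\re z$ in the $w$-slot and $s=-p_j'\re z$ in the $\sigma_j$-slot, the two scalar factors are $e^{p\re z\langle b\rangle_Q}$ and $\bigl(e^{-p_j'\re z\langle b\rangle_Q}\bigr)^{p/p_j'}=e^{-p\re z\langle b\rangle_Q}$, which cancel. (This cancellation is exactly why the perturbing exponents $p$ and $-p_j'$ are paired with the power $p/p_j'$.) Hence $\mathcal I_Q$ equals the same expression with $b$ replaced by $b-\langle b\rangle_Q$ inside both exponentials, and the whole lemma reduces to the oscillatory estimate
\[
\langle v\,e^{t(b-\langle b\rangle_Q)}\rangle_Q\le 2\beta_n\,\langle v\rangle_Q\qquad\text{for }v\in A_\infty\text{ and }|t|\text{ small},
\]
applied once with $(v,t)=(w,p\re z)$ and once with $(v,t)=(\sigma_j,-p_j'\re z)$; granting it, $\mathcal I_Q\le(2\beta_n)^{1+p/p_j'}\langle w\rangle_Q\prod_{i=1}^m\langle\sigma_i\rangle_Q^{p/p_i'}\le(2\beta_n)^{1+p/p_j'}[w,\vec\sigma]_{A_{\vec P}}$, so $c_{n,\vec P}=(2\beta_n)^{1+p/p_j'}$ works.

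For the oscillatory estimate I would combine H\"older's inequality with the sharp reverse H\"older inequality for $A_\infty$ weights of \cite{HP}: there is an exponent $r_v=1+\tfrac{1}{c_n[v]_{A_\infty}}$ with $\langle v^{r_v}\rangle_Q^{1/r_v}\le 2\langle v\rangle_Q$ and $r_v'\le c_n(1+[v]_{A_\infty})$, so that
\[
\langle v\,e^{t(b-\langle b\rangle_Q)}\rangle_Q\le\langle v^{r_v}\rangle_Q^{1/r_v}\,\langle e^{r_v'|t|\,|b-\langle b\rangle_Q|}\rangle_Q^{1/r_v'}\le 2\langle v\rangle_Q\,\langle e^{r_v'|t|\,|b-\langle b\rangle_Q|}\rangle_Q^{1/r_v'},
\]
and the John--Nirenberg inequality \eqref{eq:johnnirenberg} bounds the last average by $\beta_n$ as soon as $r_v'|t|\le\alpha_n/\|b\|_{BMO}$. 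It then remains to verify that the hypothesis on $|z|$ forces $r_v'|t|\le\alpha_n/\|b\|_{BMO}$ in both cases: writing $M:=\max\{[w]_{A_\infty},[\sigma_1]_{A_\infty},\dots,[\sigma_m]_{A_\infty}\}$ we have $r_v'\le c_n(1+M)$ throughout, while $|t|\le p|z|$ for the $w$-slot and $|t|\le p_j'|z|$ for the $\sigma_j$-slot; the factor $\min\{1,p_1'/p,\dots,p_m'/p\}$ in the numerator of the smallness condition is precisely what absorbs these competing constraints (and, ranging over $j$, all $m+1$ of them at once).

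The main obstacle is the constant bookkeeping in this last step — keeping track of the dimensional constant hidden in the Hyt\"onen--P\'erez reverse H\"older exponent (hence in $r_v'$) and checking that the single factor $\min\{1,p_1'/p,\dots,p_m'/p\}/p$ in the stated hypothesis dominates the worst of the constraints of the form $1/(r_w'p)$ and $1/(r_{\sigma_j}'p_j')$. Everything else is routine: the splitting, the cancellation of the $\langle b\rangle_Q$-factors, and the three classical ingredients (H\"older, reverse H\"older, John--Nirenberg); and the argument is manifestly symmetric in, and uniform over, the index $j$ that is singled out.
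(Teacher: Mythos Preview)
Your proof is correct and follows essentially the same approach as the paper: H\"older's inequality, the sharp reverse H\"older inequality (Proposition~\ref{prop:reverseholder}), and John--Nirenberg. The only cosmetic difference is organizational: you first subtract $\langle b\rangle_Q$ to force the scalar cancellation and then bound each perturbed average separately via John--Nirenberg, whereas the paper uses a common H\"older exponent $r$ for both $w$ and $\sigma_j$, groups the two exponential averages into the single quantity $[e^{pbr'\re z}]_{A_{1+p/p_j'}}^{1/r'}$, and invokes Proposition~\ref{prop:cpp} (which itself is just packaged John--Nirenberg).
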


To prove the previous lemma, we need to recall this sharp version of the reverse H\"older's inequality proved in \cite{HP}.

\begin{Proposition}\label{prop:reverseholder}
Let $w\in A_\infty$. Then for any $0\le r \le 1+ \frac {1}{c_n [w]_{A_\infty}}$, we have
\[
\Big(\frac 1{|Q|}\int_Q w(x)^r dx\Big)^{\frac 1r}\le 2 \frac 1{|Q|}\int_Q w(x) dx.
\]
\end{Proposition}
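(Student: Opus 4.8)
\textbf{Plan of proof (for Lemma~\ref{lm:prodweight}).}
The goal is to show that multiplying $w$ by $e^{pb\re z}$ and $\sigma_j$ by $e^{-p_j'b\re z}$ perturbs the $A_{\vec P}$ characteristic by at most a dimensional-and-$\vec P$-dependent factor, provided $\re z$ is small relative to all the $A_\infty$ constants in play. The key observation is that in the product defining $[\cdot,\vec\sigma\,]_{A_{\vec P}}$ the exponential factors combine as
\[
\langle we^{pb\re z}\rangle_Q\,\langle \sigma_j e^{-p_j'b\re z}\rangle_Q^{p/p_j'}\prod_{i\neq j}\langle\sigma_i\rangle_Q^{p/p_i'},
\]
and the $b$-dependence cancels \emph{on average} (since $\langle e^{pb\re z}\rangle_Q^{1/p}\cdot\langle e^{-p_j'b\re z}\rangle_Q^{1/p_j'}\ge e^{\langle b\rangle_Q\re z}e^{-\langle b\rangle_Q\re z}=1$ only gives one direction), so the real work is to bound each exponential average by a constant times the corresponding unperturbed average.

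\textbf{Step 1: factor the two exponential-twisted averages via Hölder.}
First I would apply Hölder's inequality with a small exponent $r>1$ to peel the exponential off $w$: writing $1/r+1/r'=1$,
\[
\langle we^{pb\re z}\rangle_Q\le \langle w^{r}\rangle_Q^{1/r}\,\langle e^{r'pb\re z}\rangle_Q^{1/r'}.
\]
Choosing $r=1+\frac1{c_n[w]_{A_\infty}}$, Proposition~\ref{prop:reverseholder} (the sharp reverse Hölder inequality) gives $\langle w^r\rangle_Q^{1/r}\le 2\langle w\rangle_Q$. Simultaneously, with $r'\sim c_n[w]_{A_\infty}$, the hypothesis on $|z|$ forces $|r'p\re z|\le \alpha_n/\|b\|_{BMO}$, so the John--Nirenberg inequality \eqref{eq:johnnirenberg} (applied after writing $b=(b-\langle b\rangle_Q)+\langle b\rangle_Q$) yields $\langle e^{r'pb\re z}\rangle_Q\le \beta_n\, e^{r'p\re z\langle b\rangle_Q}$, hence $\langle e^{r'pb\re z}\rangle_Q^{1/r'}\le \beta_n\, e^{p\re z\langle b\rangle_Q}$. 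Doing the same for $\sigma_j$ with exponent $s=1+\frac1{c_n[\sigma_j]_{A_\infty}}$ gives $\langle \sigma_j e^{-p_j'b\re z}\rangle_Q^{p/p_j'}\le (2\langle\sigma_j\rangle_Q)^{p/p_j'}\,\beta_n^{p/p_j'}\, e^{-p\re z\langle b\rangle_Q}$, where again the constraint on $|z|$ is exactly what makes $s'p_j'|\re z|\le\alpha_n/\|b\|_{BMO}$.

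\textbf{Step 2: multiply and take the supremum.}
Multiplying the two bounds from Step~1 together with the unchanged factors $\prod_{i\neq j}\langle\sigma_i\rangle_Q^{p/p_i'}$, the exponential factors $e^{p\re z\langle b\rangle_Q}$ and $e^{-p\re z\langle b\rangle_Q}$ cancel exactly, leaving
\[
\langle we^{pb\re z}\rangle_Q\,\langle \sigma_j e^{-p_j'b\re z}\rangle_Q^{p/p_j'}\prod_{i\neq j}\langle\sigma_i\rangle_Q^{p/p_i'}\le 2^{1+p/p_j'}\beta_n^{1+p/p_j'}\,\langle w\rangle_Q\,\prod_{i=1}^m\langle\sigma_i\rangle_Q^{p/p_i'}\le c_{n,\vec P}\,[w,\vec\sigma\,]_{A_{\vec P}}.
\]
Taking the supremum over all cubes $Q$ gives the claimed inequality with $c_{n,\vec P}=2^{1+p/p_j'}\beta_n^{1+p/p_j'}$ (one may take the max over $j$ to get a single constant).

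\textbf{Main obstacle.} The only delicate point is the bookkeeping: one must verify that the single smallness hypothesis
\[
|z|\le \frac{\alpha_n\min\{1,\tfrac{p_1'}p,\dots,\tfrac{p_m'}p\}}{p(1+\max\{[w]_{A_\infty},[\sigma_1]_{A_\infty},\dots,[\sigma_m]_{A_\infty}\})\|b\|_{BMO}}
\]
is strong enough to simultaneously control \emph{both} John--Nirenberg applications (with the reverse-Hölder conjugate exponents $r'\approx c_n[w]_{A_\infty}$ and $s'\approx c_n[\sigma_j]_{A_\infty}$ built in), for every choice of twisted slot $j$. Tracking the factor $p$ versus $p_j'$ and the factor $\min\{1,p_i'/p\}$ through the two Hölder steps is exactly what pins down the displayed threshold; this is routine but must be done carefully so that the constant $c_{n,\vec P}$ genuinely does not depend on the weights.
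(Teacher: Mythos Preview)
Your proposal does not address the stated Proposition~\ref{prop:reverseholder} at all. That proposition is the sharp reverse H\"older inequality, which the paper does not prove but merely quotes from \cite{HP}; your write-up instead \emph{uses} Proposition~\ref{prop:reverseholder} as an input and sketches a proof of Lemma~\ref{lm:prodweight}. So as a response to the given statement it is simply off target.

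That said, read as a proof of Lemma~\ref{lm:prodweight}, your argument is correct and very close to the paper's. The paper chooses a \emph{single} H\"older exponent $r=1+\tfrac{1}{c_n\max\{[w]_{A_\infty},[\sigma_j]_{A_\infty}\}}$, applies reverse H\"older to both $w$ and $\sigma_j$, and then recognises the remaining product $\langle e^{pbr'\re z}\rangle_Q^{1/r'}\langle e^{-p_j'br'\re z}\rangle_Q^{p/(r'p_j')}$ as $[e^{pbr'\re z}]_{A_{1+p/p_j'}}^{1/r'}$, which is bounded via Proposition~\ref{prop:cpp}. You instead take two separate exponents $r,s$ tuned to $[w]_{A_\infty}$ and $[\sigma_j]_{A_\infty}$ respectively, and bound each exponential average directly by John--Nirenberg after splitting $b=(b-\langle b\rangle_Q)+\langle b\rangle_Q$, so that the factors $e^{\pm p\re z\langle b\rangle_Q}$ cancel explicitly. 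This is exactly what Proposition~\ref{prop:cpp} does internally (the $A_p$ constant of $e^{sb}$ is insensitive to adding a constant to $b$), so the two routes are equivalent; yours is a bit more hands-on, the paper's a bit more packaged. The bookkeeping issue you flag about matching the smallness hypothesis on $|z|$ to the conjugate exponents $r',s'$ is real but routine, and is handled in the paper by the appearance of $\min\{1,p_1'/p,\dots,p_m'/p\}$ together with $1+\max\{[w]_{A_\infty},\dots\}$ in the denominator.
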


\begin{proof}[Proof of Lemma \ref{lm:prodweight}]
Set
\[
r= 1+\frac 1{c_n \max \{[w]_{A_\infty}, [\sigma_j]_{A_\infty}\}}.
\]
By definition of the $A_{\vec P}$ constant, H\"older's inequality and Proposition~\ref{prop:reverseholder}, we have
\begin{align*}
&[we^{pb\re z}, \sigma_1,\cdots, \sigma_j e^{-p_j'b\re z}, \cdots, \sigma_m]_{A_{\vec {P}}}\\
&=\sup_Q \langle we^{pb\re z}\rangle_Q \langle \sigma_j e^{-p_j'b\re z}\rangle_Q^{\frac p{p_j'}} \prod_{i\neq j} \langle \sigma_i\rangle_Q^{\frac p{p_i'}}\\
&\le \sup_Q \langle w^r\rangle_Q^{\frac 1r} \langle e^{pbr'\re z}\rangle_Q^{\frac 1{r'}} \langle \sigma_j^r\rangle_Q^{\frac p{rp_j'}}  \langle  e^{-p_j'br'\re z}\rangle_Q^{\frac p{r'p_j'}} \prod_{i\neq j} \langle \sigma_i\rangle_Q^{\frac p{p_i'}}\\
&\le 4 \sup_Q \langle w \rangle_Q  \langle e^{pbr'\re z}\rangle_Q^{\frac1{r'}} \langle \sigma_j \rangle_Q^{\frac p{p_j'}}  \langle  e^{-p_j'br'\re z}\rangle_Q^{\frac p{r'p_j'}} \prod_{i\neq j} \langle \sigma_i\rangle_Q^{\frac p{p_i'}}\\
&\le 4[w,\vec \sigma]_{A_{\vec P}} [e^{pbr'\re z}]_{A_{1+\frac p{p_j'}}}^{\frac 1{r'}}\\
&\le c_{n,\vec P} [w,\vec \sigma]_{A_{\vec P}},
\end{align*}
where Proposition \ref{prop:cpp} is used in the last step.
\end{proof}

Now we are ready to prove the main result in this section.

\begin{proof}[Proof of Theorem~\ref{Thm:comm}]
It suffices to study the boundedness of $[\vec b, T]_i$. Without loss of generality, we just consider the case $i=1$. Using the same trick as that in \cite[Thm. 3.1]{CPP}, for any complex number $z$, we define
\[
T^1_z (\vec f)= e^{zb} T(e^{-zb}f_1, f_2,\cdots, f_m).
\]
Then by using the Cauchy integral theorem, we get for ``nice" functions,
\[
[b, T]_1(\vec f)= \frac {d}{dz}T^1_z (\vec f)\Big|_{z=0}=\frac 1{2\pi i}\int_{|z|=\varepsilon} \frac {T^1_z(\vec f)}{z^2} dz,\,\, \varepsilon>0.
\]
Next, using Minkowski's inequality, for $p\ge 1$,
\begin{equation}\label{eq:minkowski}
\|[b, T]_1(\vec f)\|_{L^p(w)}\le \frac {1}{2\pi \varepsilon^2} \int_{|z|=\varepsilon} \|T^1_z(\vec f)\|_{L^p(w)} |dz|.
\end{equation}
Notice that
\begin{equation}\label{eq:relation}
\|T^1_z(\vec f)\|_{L^p(w)} = \|T(e^{-zb}f_1, f_2,\cdots, f_m)\|_{L^p(we^{pb\re z})}.
\end{equation}
Therefore, applying the boundedness properties for Calder\'on--Zygmund operators in Theorem~\ref{Thm:1} for weights $(we^{pb\re z},w_1 e^{p_1 b \re z}, w_2,\ldots,w_m)$ with $p_0=\gamma=1$, we get
\begin{equation}\begin{split}\label{eq:conjugation}
\|T(&e^{-zb}f_1, f_2,\cdots, f_m)\|_{L^p(we^{pb\re z})}
\lesssim [e^{pb\re z}w,e^{-p_1' b\re z}\sigma_1,\sigma_2,\ldots,\sigma_m]^{1/p}_{A_{\vec P}} \\
&\times \Big( [e^{-p_1'b\re z}\sigma_1]_{A_{\infty}}^{1/p_1}\prod_{i=2}^m [\sigma_i]_{A_{\infty}}^{1/p_i}+[e^{pb\re z}w]_{A_{\infty}}^{1/p'} \Big(\prod_{i=2}^m [\sigma_i]_{A_{\infty}}^{1/p_i} + \Big.\Big.\\
+& \Big.\Big. \sum_{i'=2}^m [\sigma_1 e^{-p_1'b\re{z}}]_{A_{\infty}}^{1/p_1}\prod_{\substack{i\neq i'\\ i>1}} [\sigma_i]_{A_{\infty}}^{1/p_i} \Big)\Big) ||f_1e^{-zb}||_{L^{p_1}(e^{bp_1\re z}w_1)}\prod_{i=2}^m ||f_i||_{L^{p_i}(w_i)}.
\end{split}\end{equation}
Combining \eqref{eq:minkowski}, \eqref{eq:relation} and \eqref{eq:conjugation} and using Proposition~\ref{ainfty} and Lemma~\ref{lm:prodweight}, we arrive at
\begin{equation}\begin{split}
\|[b,& T]_1(\vec f)\|_{L^p(w)} \\
&\leq \frac {1}{2\pi \varepsilon} [w,\vec\sigma]_{A_{\vec P}}^{1/p} \left( \prod_{i=1}^m [\sigma_i]_{A_{\infty}}^{1/p_i} + [w]_{A_{\infty}}^{1/p'}\sum_{i'=1}^m\prod_{i'\neq i}[\sigma_i]_{A_{\infty}}^{1/p_i}\right) \prod_{i=1}^m ||f_i||_{L^{p_i}(w_i)}.
\end{split}\end{equation}
Now taking
\[
\varepsilon= \frac {c_{n,\vec P}}{([w]_{A_\infty}+\sum_{i=1}^m[\sigma_i]_{A_\infty})\|b_1\|_{BMO}},
\]
where $c_{n,\vec P}$ is sufficiently small such that it satisfies the hypotheses in Proposition \ref{ainfty} and Lemma \ref{lm:prodweight}. Then,we obtain
\begin{equation*}\begin{split}
\|[b, T]_1(\vec f)\|_{L^p(w)}&\lesssim [w,\vec\sigma]_{A_{\vec P}}^{\frac 1p}(\prod_{i=1}^m [\sigma_i]_{A_{\infty}}^{\frac 1{p_i}}+[w]_{A_\infty}^{\frac 1{p'}}\sum_{j=1}^m\prod_{i\neq j}[\sigma_i]_{A_\infty}^{\frac 1{p_i}})\\&\times([w]_{A_\infty}+\sum_{i=1}^m[\sigma_i]_{A_\infty})||b_1||_{BMO}\prod_{i=1}^m ||f_i||_{L^{p_i}(w_i)}.
\end{split}\end{equation*}
The general result follows immediately combining the estimates for all the commutators in the different variables.
\end{proof}

\subsection{Mixed $A_p$-$A_\infty$ estimates for multilinear square functions and multilinear Fourier multipliers}\label{Sect:6.2}

The results obtained in Section~\ref{Sect:5} can be applied to different instances of operators which can be reduced to the simpler dyadic operators $\mathcal{A}_{p_0,\gamma,\mathcal{S}}$.

Firstly, observe that the mixed weighted bounds obtained in the main theorems in Section~\ref{Sect:5} can be extended to the case of multilinear square functions taking into account \cite[Prop. 4.2]{BuiHormozi} and choosing $p_0=1$ and $\gamma=2$.

These mixed bounds can also be extended to multilinear Fourier multipliers, which are a particular example of a general class of operators whose kernels satisfy weaker regularity conditions than the usual H\"older continuity. To obtain the corresponding mixed bounds, it is sufficient to consider the results in \cite{BCDH} together with the main theorems in Section~\ref{Sect:5} for $\gamma=1$. It is worth mentioning that these mixed bounds for Fourier multipliers seem to be new in the multilinear scenario.

\section{Appendix}\label{Sect:Appendix}

In this appendix we state and prove some well-known boundedness results for bilinear Calder\'on--Zygmund operators and their maximal truncations, which also hold in the multilinear setting. It is worth mentioning that the novelty of these results is not only that they are stated in a quantitative way that will be useful for our purposes, but also that some of these results are proved under weaker regularity conditions on the kernels than those results in the literature.


\begin{Lemma}\label{weak11}
Let $T$ be a bilinear Dini-continuous Calder\'on-Zygmund operator. Then $T$ is bounded from $L^1\times L^1$ to $L^{\frac 12, \infty}$ and
\begin{equation}\label{eq:weak11}
\|T\|_{L^{1}\times L^1\rightarrow L^{ 1/2,\infty}}\lesssim \|T\|_{L^{q_1}\times L^{q_2} \rightarrow L^q}+ \|\omega\|_{\textup{Dini}},
\end{equation}
where $|T\|_{L^{q_1}\times L^{q_2} \rightarrow L^q}$ denotes the norm of the operator as in its definition.
\end{Lemma}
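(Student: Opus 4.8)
The plan is to run the standard Calder\'on--Zygmund decomposition, adapted to the bilinear setting, using the kernel only through the smoothness estimate and hence only through $\|\omega\|_{\textup{Dini}}$. By bilinearity and the homogeneity (of degree $1$) of the quasi-norm $\|\cdot\|_{L^{1/2,\infty}}$, it is enough to show, for normalized functions $\|f_1\|_{L^1}=\|f_2\|_{L^1}=1$ and for every $\lambda>0$,
\[
|\{x:|T(f_1,f_2)(x)|>\lambda\}|\lesssim \big(\|T\|_{L^{q_1}\times L^{q_2}\to L^{q}}+\|\omega\|_{\textup{Dini}}\big)^{1/2}\,\lambda^{-1/2},
\]
since a level set bound of the form $D\lambda^{-1/2}$ is equivalent to $\|T(f_1,f_2)\|_{L^{1/2,\infty}}\lesssim D^{2}$. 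Fix $\lambda>0$ and a parameter $\beta>0$ to be chosen later, and perform a Calder\'on--Zygmund decomposition of each $f_i$ at height $\beta\lambda^{1/2}$: write $f_i=g_i+b_i$, $b_i=\sum_j b_{i,j}$, where $b_{i,j}$ is supported on a cube $Q_{i,j}$ with $\int b_{i,j}=0$ and $\int_{Q_{i,j}}|b_{i,j}|\lesssim \beta\lambda^{1/2}|Q_{i,j}|$, the $\{Q_{i,j}\}_j$ are pairwise disjoint with $\sum_j|Q_{i,j}|\lesssim (\beta\lambda^{1/2})^{-1}$, and $\|g_i\|_{L^{\infty}}\lesssim \beta\lambda^{1/2}$, $\|g_i\|_{L^1}\le 1$. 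Put $\Omega^*=\bigcup_{i,j}cQ_{i,j}$ with $c=c(n,\tau)$ a fixed dilation; then $|\Omega^*|\lesssim \beta^{-1}\lambda^{-1/2}$ and it suffices to bound $|\{x\notin\Omega^*:|T(f_1,f_2)(x)|>\lambda\}|$. Split $T(f_1,f_2)=T(g_1,g_2)+T(b_1,g_2)+T(g_1,b_2)+T(b_1,b_2)$.

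For the good--good term I would use the a priori bound $T:L^{q_1}\times L^{q_2}\to L^{q}$, the estimate $\|g_i\|_{L^{q_i}}^{q_i}\le\|g_i\|_{L^\infty}^{q_i-1}\|g_i\|_{L^1}\lesssim (\beta\lambda^{1/2})^{q_i-1}$, and Chebyshev's inequality at level $\lambda/4$. Since $\tfrac1q=\tfrac1{q_1}+\tfrac1{q_2}$, the powers of $\lambda$ combine and one gets $|\{|T(g_1,g_2)|>\lambda/4\}|\lesssim \|T\|_{L^{q_1}\times L^{q_2}\to L^{q}}^{q}\,\beta^{2q-1}\,\lambda^{-1/2}$. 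This is the only place the operator norm enters.

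For each of the three terms containing a bad factor I would exploit its cancellation, on the set $(\Omega^*)^c\subset (cQ_{i,j})^c$. For instance, for $T(b_1,g_2)=\sum_j T(b_{1,j},g_2)$ and $x\notin cQ_{1,j}$ write $T(b_{1,j},g_2)(x)=\iint [K(x,y,z)-K(x,c_{1,j},z)]\,b_{1,j}(y)g_2(z)\,dy\,dz$, bound $|K(x,y,z)-K(x,c_{1,j},z)|$ by the smoothness estimate, and integrate in $x$: decomposing $\{x:|x-c_{1,j}|\sim 2^{k}\ell(Q_{1,j})\}$ dyadically turns $\int_{(cQ_{1,j})^c}|K(x,y,z)-K(x,c_{1,j},z)|\,dx$ into $\lesssim \ell(Q_{1,j})^{-n}\sum_{k\ge0}\omega(2^{-k})$, with the additional spatial gain $|z-c_{1,j}|^{-n}\omega(\ell(Q_{1,j})/|z-c_{1,j}|)$ when $z\notin C Q_{1,j}$; here $\sum_{k\ge0}\omega(2^{-k})\eqsim\int_0^1\omega(t)\tfrac{dt}{t}=\|\omega\|_{\textup{Dini}}$. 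Using $\|g_2\|_{L^\infty}\lesssim\beta\lambda^{1/2}$ to absorb the $z$-integral, then $\int_{Q_{1,j}}|b_{1,j}|\lesssim\beta\lambda^{1/2}|Q_{1,j}|$, and finally $\sum_j|Q_{1,j}|\lesssim(\beta\lambda^{1/2})^{-1}$, gives $\int_{(\Omega^*)^c}|T(b_1,g_2)|\lesssim \beta\lambda^{1/2}\|\omega\|_{\textup{Dini}}$, whence by Chebyshev $|\{x\notin\Omega^*:|T(b_1,g_2)|>\lambda/4\}|\lesssim \beta\|\omega\|_{\textup{Dini}}\lambda^{-1/2}$; the term $T(g_1,b_2)$ is symmetric. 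For $T(b_1,b_2)=\sum_{j,k}T(b_{1,j},b_{2,k})$ one uses the cancellation of the factor whose cube is the \emph{smaller} of $Q_{1,j},Q_{2,k}$; the $x$-integral of the corresponding kernel difference is again controlled by $\|\omega\|_{\textup{Dini}}$ times a negative power of the relevant side length and carries spatial decay in $|c_{1,j}-c_{2,k}|$, and the resulting double sum over the Calder\'on--Zygmund cubes is organized using the pairwise disjointness of $\{Q_{1,j}\}_j$ and of $\{Q_{2,k}\}_k$ together with $\sum_j|Q_{1,j}|\lesssim(\beta\lambda^{1/2})^{-1}$, yielding $|\{x\notin\Omega^*:|T(b_1,b_2)|>\lambda/4\}|\lesssim\beta\|\omega\|_{\textup{Dini}}\lambda^{-1/2}$. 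Adding the four contributions and $|\Omega^*|$, and optimizing the free parameter $\beta$ (balancing $\beta^{-1}$ against $\|T\|^{q}\beta^{2q-1}$ and $\beta\|\omega\|_{\textup{Dini}}$), produces the desired level set bound with the linear dependence on $\|T\|_{L^{q_1}\times L^{q_2}\to L^{q}}+\|\omega\|_{\textup{Dini}}$.

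The main obstacle is the bad--bad term: in contrast to the linear case one must simultaneously track the ordering of the side lengths $\ell(Q_{1,j}),\ell(Q_{2,k})$ and the distance between the two cubes, so that after choosing the right variable for the cancellation the double sum still telescopes against $\sum_j|Q_{1,j}|$ (or $\sum_k|Q_{2,k}|$); this is precisely where the $2n$-homogeneity of the kernel estimates and the Dini condition are used most carefully. The optimization of the decomposition height $\beta\lambda^{1/2}$ is the mechanism that upgrades the $\|T\|^{q}$ coming from the a priori bound to the clean linear bound in the statement (and keeps $C_K$ out of it, since only the smoothness estimate is used on the bad pieces); everything else is routine bookkeeping of the Calder\'on--Zygmund decomposition.
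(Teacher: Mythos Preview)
Your proposal is correct and follows essentially the same route as the paper: Calder\'on--Zygmund decompose each $f_i$, split $T(f_1,f_2)$ into the four good/bad pieces, treat the good--good term via Chebyshev and the a~priori $L^{q_1}\times L^{q_2}\to L^q$ bound, and control the three remaining terms on $(\Omega^*)^c$ by subtracting the kernel at a cube center, using only the smoothness estimate and the Dini condition; finally optimize the decomposition height to convert $\|T\|^q$ into a linear dependence. The only cosmetic differences are that the paper keeps two separate heights $\alpha_1\lambda,\alpha_2\lambda$ (your normalization $\|f_i\|_{L^1}=1$ with a single $\beta$ is equivalent), and for the bad--bad term the paper uses the cancellation of one fixed factor and then symmetrizes via $\omega(\ell(Q_k^2)/\cdot)\le\omega((\ell(Q_k^2)+\ell(Q_l^1))/\cdot)$ before splitting on side lengths, whereas you propose choosing the cancellation variable according to the smaller cube; both organizations are standard and lead to the same bound. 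One small wording issue: in your sketch of $T(b_1,g_2)$ it is cleaner (and this is what the paper does) to first use $\omega(\ell/(|x-y|+|x-z|))\le\omega(\ell/|x-y|)$ and integrate the free variable~$z$ against $\|g_2\|_{L^\infty}$ to collapse $(|x-y|+|x-z|)^{-2n}$ to $|x-y|^{-n}$, after which the dyadic $x$-integral produces $\|\omega\|_{\textup{Dini}}$ directly; your description of the $x$-integral yielding ``$\ell(Q_{1,j})^{-n}\sum_k\omega(2^{-k})$ with additional spatial gain in $z$'' is a slightly more involved bookkeeping of the same estimate.
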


This result was proved under the $Dini(\tfrac{1}{2})$ condition in \cite{MN}.  Observe that $Dini(1/2)$ condition is an stronger condition than Dini condition, which is also referred to as $Dini(1)$. In \cite{PT}, P\'erez and Torres studied the problem under the $BGHC$ condition. Namely, we say that a bilinear operator with kernel $K$ satisfies the bilinear geometric H\"ormander condition ($BGHC$) if  there exists a fixed constant $C$ such that and for any family of disjoint dyadic cubes $D_1$ and $D_2$,
\begin{equation}
  \int_{\Rn}\sup_{y\in Q}\int_{\mathbb{R}\setminus Q^*} |K(x,y,z)-K(x,y_Q,z)| dxdz \leq C,
\end{equation}
\begin{equation}
  \int_{\Rn}\sup_{z\in P}\int_{\mathbb{R}\setminus P^*} |K(x,y,z)-K(x,y,z_P)| dxdy \leq C,
\end{equation}
and
\begin{equation}\begin{split}
  \sum_{(P,Q)\in D_1\times D_2} \hspace{-0.5cm}|P| |Q| \sup_{(y,z)\in P\times Q}&\int_{\Rn \setminus(\cup_{R\in D_1})\cup(\cup_{S\in D_2)}} |K(x,y,z)-K(x,y_P,z_Q)|dx \\
  &\leq C (|\cup_{P\in D_1}P|+|\cup_{Q\in D_2}Q|).
\end{split}\end{equation}
Here $Q^*$ is the cube wit the same center as $Q$ and sidelength $10\sqrt{n} \ell(Q)$.This condition, which is actually stated here in an equivalent way, was shown to be weaker than the Dini condition in \cite[Prop. 2.3]{PT}). Thus, Lemma~\ref{weak11} follows immediately from the mentioned result. Here we give the proof with the precise constants.

\begin{proof}[Proof of Lemma~\ref{weak11}]

Suppose that $T$ is bounded from $L^{q_1}\times L^{q_2}$ to $L^q$, where $\frac 1{q_1}+\frac 1{q_2}=\frac 1q$. We shall dominate the bound $\|T\|_{L^{1}\times L^1\rightarrow L^{ 1/2,\infty}}$ by $\|T\|_{L^{q_1}\times L^{q_2} \rightarrow L^q}+ \|\omega\|_{\textup{Dini}}$. Indeed, fix $\lambda>0$ and consider without loss of generality functions $f_i\geq 0$, $i=1,2$. Let $\alpha_i>0$ be numbers to be determined later.  Apply the Calder\'on-Zygmund decomposition to $f_i$ at height $\alpha_i \lambda$, to obtain its good and bad parts $g_i$ and $b_i$, respectively, and families of cubes $\{Q_k^i\}_k$ with disjoint interiors such that $f_i=g_i+b_i$ and $b_i=\sum_k b_k^i$ verifying the properties in \cite[Thm. 4.3.1]{GrafC}.
\par Next, set $\Omega_i= \cup_k 4n Q_k^i$. We have
\begin{align*}
 \big| \{x: |T(f_1, f_2)(x)|>\lambda\} \big|  &\le |\Omega_1|+|\Omega_2| \\ &+\big| \{x\in  (\Omega_1\cup \Omega_2)^c: |T(g_1, g_2)(x)|>\frac \lambda 4\} \big|\\&+\big| \{x\in  (\Omega_1\cup \Omega_2)^c: |T(g_1, b_2)(x)|>\frac\lambda 4\} \big|\\
&+\big| \{x\in  (\Omega_1\cup \Omega_2)^c: |T(b_1, g_2)(x)|>\frac\lambda 4\} \big|\\
&+\big| \{x\in   (\Omega_1\cup \Omega_2)^c: |T(b_1, b_2)(x)|>\frac\lambda 4\} \big|.
\end{align*}
It is easy to see that
\[
 |\Omega_1|+|\Omega_2|\le C_n \Big(\frac 1{\alpha_1\lambda}\|f_1\|_{L^1}+\frac 1{\alpha_2\lambda}\|f_2\|_{L^1}\Big).
\]
For the third term, using Chebychev's inequality and the boundedness properties of $T$ and $g_i$, we have
\begin{align*}
 &\big| \{x\in (\Omega_1\cup \Omega_2)^c: |T(g_1, g_2)(x)|>\frac \lambda 4\} \big| \\&\le \frac {4^q} {\lambda^q} \|T(g_1, g_2)\|_{L^q}^q\\
&\le \frac {4^q} {\lambda^q} \|T\|_{L^{q_1}\times L^{q_2}\rightarrow L^q}^q \|g_1\|_{L^{q_1}}^q\|g_2\|_{L^{q_2}}^q\\
&\le \frac {4^q} {\lambda^q} C_{n,q,q_1,q_2} \|T\|_{L^{q_1}\times L^{q_2}\rightarrow L^q}^q (\alpha_1\lambda)^{q/{{q_1}'}} (\alpha_2\lambda)^{q/{{q_2}'}}\|f_1\|_{L^1}^{q/q_1}\|f_2\|_{L^1}^{q/q_2}.
\end{align*}
For the fourth term, if $c_k$ denotes the center of the cube $Q_k^2$, we have
\begin{align*}
&\big| \{x\in   (\Omega_1\cup \Omega_2)^c: |T(g_1, b^k_2)(x)|>\frac \lambda 4\} \big|\\
&\le \frac 4\lambda \int \Big|\sum_k\int \int_{Q_k^2} (K(x,y,z)-K(x,y,c_k)) g_1(y) b^k_2(z) dzdy\Big| dx\\
&\le  \frac 4\lambda\sum_k \int  \int \int_{Q_k^2} |K(x,y,z)-K(x,y,c_k)|\cdot |g_1(y)|\cdot |b^k_2(z)| dzdy dx\\
&\le  \frac 4\lambda\sum_k \int_{Q_k^2}\int  \int   \omega\Big( \frac{\sqrt n \ell (Q_k^2)}{2 (|x-y|+|x-z|)}\Big)\frac { |g_1(y)|\cdot |b^k_2(z)|}{(|x-y|+|x-z|)^{2n}}   dy dx dz\\
&\le    C_n\alpha_1 \sum_k \int_{Q_k^2}\int  \int   \omega\Big( \frac{\sqrt n \ell (Q_k^2)}{2 (| y|+|x-z|)}\Big)\frac {   |b^k_2(z)|}{(| y|+|x-z|)^{2n}}   dy dx dz\\
&\le  C_n\alpha_1 \sum_k \int_{Q_k^2}\int  \int  \omega\Big( \frac{\sqrt n \ell (Q_k^2)}{2 |x-z|}\Big)\frac {   |b^k_2(z)|}{(| y|+|x-z|)^{2n}}   dy dx dz\\
&\le  C_n\alpha_1 \sum_k \int_{Q_k^2}\int_{|x-z|\ge  n \ell(Q_k^2)}   \omega\Big( \frac{\sqrt n \ell (Q_k^2)}{2   |x-z| }\Big)\frac {   |b^k_2(z)|}{  |x-z| ^{ n}} dx dz\\
&\le C_n'\alpha_1 \|\omega\|_{\textup{Dini}} \|f_2\|_{L^1},
\end{align*}
where we have used the cancellation properties of $b^k_2$, the regularity condition on the third variable of $K$ (since $|z-c_k|<\tau \max{(|x-y|,|x-z|)}$ for $x\notin\Omega_1\cup\Omega_2$), the fact that $\omega$ is increasing, the Dini condition, $||g_1||_{L^{\infty}}\leq c_n\alpha_1\lambda$ and $\sum_k||b^k_2||_{L^1}\leq c_n ||f_2||_{L^1}$.

Since the estimate of the fifth term is symmetric to the previous estimate, it remains to estimate the last term. If we denote as $c_l$ and $c_k$ the center of the cubes $Q_l^1$ and $Q_k^2$, respectively, proceeding similarly as in the previous estimate, we obtain
\begin{align*}
&\big| \{x\in   (\Omega_1\cup \Omega_2)^c: |T(b_1, b_2)(x)|>\frac \lambda 4\} \big|\\
&\le \frac 4\lambda \int \Big|\sum_{k,l}\int_{Q_l^1} \int_{Q_k^2} (K(x,y,z)-K(x,y,c_k)) b^l_1(y) b^k_2(z) dzdy\Big| dx\\
&\le  \frac 4\lambda\sum_{k,l} \int_{(\Omega_1\cup\Omega_2)^c} \int_{Q_l^1} \int_{Q_k^2} |K(x,y,z)-K(x,y,c_k)| |b^l_1(y)| |b^k_2(z)| dxdy dx\\
&\le  \frac 4\lambda\sum_{k,l}  \int_{Q_k^2}\int_{Q_l^1} \int_{(\Omega_1\cup\Omega_2)^c}  \omega\Big( \frac{\sqrt n \ell (Q_k^2)}{2 (|x-y|+|x-z|)}\Big)\frac { |b^l_1(y)| |b^k_2(z)|dx  dy dz }{(|x-y|+|x-z|)^{2n}}  \\
&\le \frac{C_n}\lambda   \sum_{k,l}  \int_{Q_k^2}\int_{Q_l^1} \int_{(\Omega_1\cup\Omega_2)^c}  \omega\Big( \frac{\sqrt n \ell (Q_k^2)}{2 (|x-c_l|+|x-c_k|)}\Big)\frac { |b^l_1(y)||b^k_2(z)|dx  dy dz}{(|x-c_l|+|x-c_k|)^{2n}}  \\
&\le \hspace{-0.1cm} C_n  \sum_{k,l} |Q_l^1| |Q_k^2|\alpha_1 \alpha_2\lambda \hspace{-0.1cm}   \int_{(\Omega_1\cup\Omega_2)^c}  \hspace{-0.35cm}\omega\Big( \frac{\sqrt n (\ell (Q_k^2)+\ell(Q_l^1) )}{2 (|x-c_l|+|x-c_k|)}\Big)\frac {   dx}{(|x-c_l|+|x-c_k|)^{2n}}    \\
&\le  \hspace{-0.1cm}C_n'\sum_{k,l} \alpha_1 \alpha_2\lambda \int_{Q_k^2}\int_{Q_l^1}  \int_{(\Omega_1\cup\Omega_2)^c}  \hspace{-0.4cm} \omega\Big( \frac{\sqrt n (\ell (Q_k^2)+\ell(Q_l^1) )}{2 (|x-y|+|x-z|)}\Big)\frac {   dx dydz}{(|x-y|+|x-z|)^{2n}} \\
&= C_n'\sum_{k,l} \alpha_1 \alpha_2\lambda\Big( \int_{\ell(Q_k^2)\ge \ell(Q_l^1)}+ \int_{\ell(Q_l^1)\ge \ell(Q_k^2)}\Big)\\
&\le I+II.
\end{align*}
By symmetry, it suffices to estimate $I$. We have
\begin{align*}
I&\le  C_n'\sum_{k } \alpha_1 \alpha_2\lambda\int_{Q_k^2} \int_{(\Omega_1\cup\Omega_2)^c}\int_{\bbR^n}    \omega\Big( \frac{\sqrt n  \ell (Q_k^2)  )}{   |x-z| }\Big)\frac { dydxdz}{(|x-y|+|x-z|)^{2n}} \\
&= C_n'\sum_{k } \alpha_1 \alpha_2\lambda \int_{Q_k^2}\int_{(\Omega_1\cup\Omega_2)^c}    \omega\Big( \frac{\sqrt n  \ell (Q_k^2)  )}{   |x-z| }\Big)\frac {   1}{   |x-z| ^{ n}} dxdz\\
&\le C_n  \alpha_1 \|\omega\|_{\textup{Dini}} \|f_2\|_{L^1}.
\end{align*}
Combining the arguments above, we have
\begin{align*}
 \big| \{x: |T(&f_1, f_2)(x)|>\lambda\} \big| \\
 &\lesssim \frac 1{\alpha_1\lambda}\|f_1\|_{L^1}+\frac 1{\alpha_2\lambda}\|f_2\|_{L^1}\\
 &+\|T\|_{L^{q_1}\times L^{q_2}\rightarrow L^q}^q (\alpha_1 )^{q/{q_1'}} (\alpha_2 )^{q/{q_2'}}\lambda^{q-1}\|f_1\|_{L^1}^{q/q_1}\|f_2\|_{L^1}^{q/q_2}\\
 &+\alpha_1 \|\omega\|_{\textup{Dini}} \|f_2\|_{L^1}+\alpha_2 \|\omega\|_{\textup{Dini}} \|f_1\|_{L^1}
\end{align*}
Take
\begin{align*}
\alpha_1&= \lambda^{-\frac 12} \frac{\|f_1\|_{L^1}^{\frac 12}}{\|f_2\|_{L^1}^{\frac 12}}\frac 1{(\|T\|_{L^{q_1}\times L^{q_2}\rightarrow L^q}+\|\omega\|_{\textup{Dini}})^{\frac 12}}\\
\alpha_2&= \lambda^{-\frac 12} \frac{\|f_2\|_{L^1}^{\frac 12}}{\|f_1\|_{L^1}^{\frac 12}}\frac 1{(\|T\|_{L^{q_1}\times L^{q_2}\rightarrow L^q}+\|\omega\|_{\textup{Dini}})^{\frac 12}},
\end{align*}
we get
\[
\lambda \big| \{x: |T(f_1, f_2)(x)|>\lambda\} \big| ^2 \le (\|T\|_{L^{q_1}\times L^{q_2}\rightarrow L^q}+\|\omega\|_{\textup{Dini}}) \|f_1\|_{L^1}\|f_2\|_{L^1}.
\]
\end{proof}

%
We also need to show that the maximal truncated operator $T_\sharp$ is bounded from $L^1\times L^1$ to $L^{\frac 12, \infty}$. Therefore, we need to check first that Cotlar's inequality holds for this class of operators.
\begin{Theorem}\label{thm:Cotlar}
  Let $T$ be a bilinear Dini-continuous Calder\'on-Zygmund operator with kernel $K$. Then, for all $\eta\in(0,\tfrac{1}{2})$, there exists a constant $C$ such that
\begin{equation}\label{eq:cotlar}
T_\sharp (\vec f)\le  c_{\eta,n} (C_K + ||\omega||_{\textup{Dini}}+||T||_{L^{q_1}\times L^{q_2}\to L^q}) \mathcal{M}(\vec f)+ M_{\eta}(|T(\vec f)|).
\end{equation}
\end{Theorem}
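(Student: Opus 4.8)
The plan is to follow the classical derivation of Cotlar's inequality for linear Calder\'on--Zygmund operators, adapted to the bilinear situation. The two main inputs are the weak endpoint bound of Lemma~\ref{weak11}, which allows us to absorb $\|T\|_{L^{q_1}\times L^{q_2}\to L^q}+\|\omega\|_{\textup{Dini}}$ into a weak $L^{1/2}$ norm, and the size condition \eqref{eq:size} together with the smoothness of $K$ under the Dini condition \eqref{eq:Dini}, used exactly as in the proof of Lemma~\ref{lem:truncatedMO}. Throughout, $\vec f=(f_1,f_2)$ is a fixed pair of (say) bounded, compactly supported functions, so that $T(\vec f)$ is a well-defined function; it suffices to bound $|T_\varepsilon(\vec f)(x)|$ uniformly in $\varepsilon>0$.

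Fix $x\in\Rn$ and $\varepsilon>0$, set $B:=B(x,\varepsilon)$ and $B':=B(x,\rho\varepsilon)$ with $\rho=\rho(n,\tau)>0$ small enough, and split each $f_i=f_i^0+f_i^\infty$ with $f_i^0:=f_i\mathbf 1_{2B}$ and $f_i^\infty:=f_i\mathbf 1_{\Rn\setminus 2B}$. First I would show that for every $x'\in B'$,
\begin{equation*}
  |T_\varepsilon(\vec f)(x)|\le |T(\vec f)(x')|+|T(f_1^0,f_2^0)(x')|+c_n\big(C_K+\|\omega\|_{\textup{Dini}}\big)\mathcal M(\vec f)(x).
\end{equation*}
For this I would write
\begin{align*}
  T_\varepsilon(\vec f)(x)-T(\vec f)(x')
  &=\big[T_\varepsilon(\vec f)(x)-T(f_1^\infty,f_2^\infty)(x)\big]+\big[T(f_1^\infty,f_2^\infty)(x)-T(f_1^\infty,f_2^\infty)(x')\big]\\
  &\quad+\big[T(f_1^\infty,f_2^\infty)(x')-T(\vec f)(x')\big]
\end{align*}
and estimate the three brackets. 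The first bracket is the integral of $Kf_1f_2$ over the region where at least one of $|x-y|,|x-z|$ is $\le 2\varepsilon$ (the discrepancy between the spherical truncation $\{|x-y|^2+|x-z|^2>\varepsilon^2\}$ and the exclusion $\{|x-y|>2\varepsilon,\ |x-z|>2\varepsilon\}$), and the size bound \eqref{eq:size} together with a dyadic annular decomposition dominates it by $c_nC_K\mathcal M(\vec f)(x)$. The second bracket uses the smoothness of $K$ and \eqref{eq:Dini}: since $y,z\notin 2B$ and $|x-x'|\le\rho\varepsilon$ with $\rho$ small enough, one has $|x-x'|\le\tau\max(|x-y|,|x-z|)$, so the kernel difference is controlled exactly as the term $I$ in the proof of Lemma~\ref{lem:truncatedMO}, giving $\lesssim\|\omega\|_{\textup{Dini}}\mathcal M(\vec f)(x)$. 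For the third bracket, bilinearity gives $T(f_1^\infty,f_2^\infty)(x')=T(\vec f)(x')-T(f_1^0,f_2^0)(x')-T(f_1^0,f_2^\infty)(x')-T(f_1^\infty,f_2^0)(x')$; the supports of $f_i^0$ and $f_j^\infty$ are disjoint, so the two off-diagonal cross terms $T(f_1^0,f_2^\infty)(x')$ and $T(f_1^\infty,f_2^0)(x')$ admit the absolutely convergent kernel representation, and \eqref{eq:size} with a dyadic annular decomposition controls each of them pointwise on $B'$ by $c_nC_K\mathcal M(\vec f)(x)$.

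Next I would raise the displayed inequality to the power $\eta\in(0,\tfrac12)$, use subadditivity $(a+b+c)^\eta\le a^\eta+b^\eta+c^\eta$, and average over $x'\in B'$. The average of $|T(\vec f)(x')|^\eta$ is at most $M_\eta(|T(\vec f)|)(x)^\eta$, by the definition of $M_\eta$. For the middle term, since $f_1^0,f_2^0$ are supported in $2B$, Kolmogorov's inequality (here $\eta<\tfrac12$ enters) combined with Lemma~\ref{weak11} gives
\begin{align*}
  \frac1{|B'|}\int_{B'}|T(f_1^0,f_2^0)(x')|^\eta\,dx'
  &\lesssim_{n,\eta}\Big(\big(\|T\|_{L^{q_1}\times L^{q_2}\to L^q}+\|\omega\|_{\textup{Dini}}\big)\,\langle|f_1|\rangle_{2B}\langle|f_2|\rangle_{2B}\Big)^\eta\\
  &\lesssim\Big(\big(\|T\|_{L^{q_1}\times L^{q_2}\to L^q}+\|\omega\|_{\textup{Dini}}\big)\mathcal M(\vec f)(x)\Big)^\eta,
\end{align*}
where we used $\|T(f_1^0,f_2^0)\|_{L^{1/2,\infty}}\le\|T\|_{L^1\times L^1\to L^{1/2,\infty}}\|f_1^0\|_{L^1}\|f_2^0\|_{L^1}$, the bound \eqref{eq:weak11}, and $|2B|\eqsim|B'|$. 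Collecting the three contributions, taking $\eta$-th roots and the supremum over $\varepsilon>0$, one obtains \eqref{eq:cotlar}.

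The step I expect to be the main obstacle is the control of the off-diagonal cross terms (and of the off-diagonal part of the first bracket): this is the genuinely bilinear feature, with no counterpart in the linear case, and one must use the decay in \eqref{eq:size} carefully so that, after splitting both arguments into dyadic annuli centred at $x$, the ensuing double geometric series is summable with sum controlled by $\mathcal M(\vec f)(x)$. A secondary but unavoidable point is the bookkeeping needed to pass between the spherical truncation region $\{|x-y|^2+|x-z|^2>\varepsilon^2\}$ and the product localizations $2B$, and to keep the dimensional constants explicit so that the final constant has the stated form $c_{\eta,n}\big(C_K+\|\omega\|_{\textup{Dini}}+\|T\|_{L^{q_1}\times L^{q_2}\to L^q}\big)$.
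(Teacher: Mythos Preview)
Your overall plan is the right one, and the Kolmogorov/weak-type step for $T(f_1^0,f_2^0)$ is handled correctly. The gap is precisely where you anticipated it: the off-diagonal cross terms cannot be bounded by $c_nC_K\,\mathcal M(\vec f)(x)$ from the size condition alone. Take for instance $T(f_1^0,f_2^\infty)(x')$ (the same issue occurs in the cross part of your first bracket). With $|x'-y|\le c\varepsilon$ and $|x'-z|>\varepsilon$, the kernel is comparable to $|x'-z|^{-2n}$; decomposing $z$ into annuli $2^k\varepsilon<|x'-z|\le 2^{k+1}\varepsilon$ gives
\[
\sum_{k\ge 1}\frac{1}{(2^k\varepsilon)^{2n}}\int_{|x'-y|\le c\varepsilon}|f_1|\int_{|x'-z|\le 2^{k+1}\varepsilon}|f_2|
\;\lesssim\;\sum_{k\ge 1}2^{-kn}\,\langle|f_1|\rangle_{2B}\,\langle|f_2|\rangle_{B(x,2^{k+1}\varepsilon)}.
\]
This series is summable only to $Mf_1(x)\,Mf_2(x)$; to force both averages onto the same ball $B(x,2^{k+1}\varepsilon)$ costs a factor $2^{kn}$, and the resulting sum $\sum_k 1$ diverges. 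So size plus a dyadic decomposition does \emph{not} yield $\mathcal M(\vec f)(x)$ for the cross pieces, and your brackets one and three do not deliver the stated bound.

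The paper avoids this by a small but decisive change of decomposition. It first replaces $T_\varepsilon$ by the box-truncated version
\[
\widetilde T_\varepsilon(\vec f)(x)=\int_{\max\{|x-y|,|x-z|\}>\varepsilon}K(x,y,z)f_1(y)f_2(z)\,dy\,dz,
\]
noting that $|T_\varepsilon-\widetilde T_\varepsilon|\lesssim C_K\mathcal M(\vec f)$ (here both variables stay in a single ball of radius $\sim\varepsilon$, so the bilinear maximal function is legitimate). Then, with $f_i^0=f_i\mathbf 1_{B(x,\varepsilon)}$, one has for $x'\in B(x,\varepsilon/2)$
\[
\widetilde T_\varepsilon(\vec f)(x)=\int_{\max\{|x-y|,|x-z|\}>\varepsilon}\big(K(x,y,z)-K(x',y,z)\big)f_1f_2\,dy\,dz
\;+\;T(\vec f)(x')\;-\;T(f_1^0,f_2^0)(x').
\]
There are no separate cross terms: the entire exterior of $B\times B$, including the cross regions, sits inside the single kernel-difference integral. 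Decomposing that integral by $2^k\varepsilon<\max\{|x-y|,|x-z|\}\le 2^{k+1}\varepsilon$ puts \emph{both} $y$ and $z$ into $B(x,2^{k+1}\varepsilon)$ and produces the factor $\omega(2^{-k})$ from smoothness, so the sum is $\lesssim\|\omega\|_{\textup{Dini}}\,\mathcal M(\vec f)(x)$. In short, the cross regions must be treated via smoothness (where the Dini decay supplies summability), not via size; switching to the box truncation is exactly what arranges this.
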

In this proof we combine the strategies used in \cite[Thm 6.4]{MN} and \cite[Lemma 5.3]{HRT} to determine the precise constants involved in the inequality.

\begin{proof}[Proof of Theorem~\ref{thm:Cotlar}]
Let us begin defining the following maximal truncation
\[
\widetilde{T_\sharp}(f_1,f_2)(x)=\sup_{\varepsilon>0} \Big| \widetilde{T_\varepsilon}(f_1,f_2)(x) \Big|,
\]
where
\begin{align*}
\widetilde{T_\varepsilon}(f_1,f_2)(x)= \int_{\max\{|x-y|,|x-z|\}>\varepsilon} K(x,y,z) f_1(y)f_2(z)dydz.
\end{align*}
Since
\begin{equation}\label{eq:difference}
\sup_{\varepsilon>0}\left|\int_{\substack{\max\{|x-y|,|x-z|\}\le \varepsilon\\ |x-y|^2+|x-z|^2>\varepsilon^2}} K(x,y,z)f_1(y)f_2(z)dydz  \right|\lesssim C_K \mathcal M(f_1,f_2)(x),
\end{equation}
it suffices to show \eqref{eq:cotlar} with $T_\sharp$ replaced by $\widetilde{T_\sharp}$. Notice that we can write for $x'\in B(x,\varepsilon/2)$,
\begin{equation}\begin{split}\label{eq:CotlarAux}
\widetilde{T}_{\varepsilon}(f_1,f_2)(x)&= \int_{\max\{|x-y|,|x-z|\}>\varepsilon} (K(x,y,z) - K(x',y,z)) f_1(y)f_2(z) dydz \\
&+ T(f_1,f_2)(x')+T(f_1^0,f_2^0)(x'),
\end{split}\end{equation}
where $f_i^0=f_i \mathbf 1_{B(x,\varepsilon)}$. For the first term in \eqref{eq:CotlarAux}, using the regularity assumptions on the kernel, we get
\begin{align*}
&\Big|  \int_{\max\{|x-y|,|x-z|\}>\varepsilon} (K(x,y,z)-K(x',y,z)) f_1(y)f_2(z) dydz   \Big|\\
&\le    \int_{\max\{|x-y|,|x-z|\}>\varepsilon} \omega\Big(\frac{|x-x'|}{|x-y|+|x-z|}\Big)\frac{|f_1(y)| |f_2(z)| dydz}{(|x-y|+|x-z|)^{2n}}    \\
&= \sum_{k=0}^\infty \int_{2^k\varepsilon <\max\{|x-y|, |x-z|\}\le 2^{k+1}\varepsilon }   \omega\Big( \frac{|x-x'|}{2^{k}\varepsilon}\Big)\frac 1{(2^{k}\varepsilon)^{2n}}|f_1(y)||f_2(z)| dydz\\
&\lesssim \mathcal M(f_1,f_2)(x)  \sum_{k=0}^\infty   \omega\Big( \frac{|x-x'|}{2^{k}\varepsilon}\Big)\\
&\lesssim \mathcal M(f_1,f_2)(x) \sum_{k=0}^\infty \int_{2^{k-1}}^{2^k}\omega(\frac{|x-x'|}{\varepsilon t})\frac {dt}t\\
&=\mathcal M(f_1,f_2)(x) \sum_{k=0}^\infty \int_{\frac{|x-x'|}{2^k \varepsilon}}^{\frac{|x-x'|}{2^{k-1} \varepsilon}}\omega(u)\frac {du}u\\
&=\mathcal M(f_1,f_2)(x) \int_{0}^{\frac{2|x-x'|}{ \varepsilon}}\omega(u)\frac {du}u\\
&\le \|\omega\|_{\textup{Dini}} \mathcal M(f_1,f_2)(x),
\end{align*}
where the last step holds since $|x-x'|\le \varepsilon/2$.
Next, taking the $L^{\eta}$ average over $x'\in B(x,\varepsilon/2)$, we arrive at
\begin{align*}
|\widetilde{T_\varepsilon}(f_1,f_2)(x)|&\lesssim ||\omega||_{\textup{Dini}} \mathcal M(f_1,f_2)(x) + M_{\eta}(|T(f_1,f_2)|)(x)\\
&+ \left(\frac 1{|B(x, \varepsilon/2)|}\int_{B(x, \varepsilon/2)} |T(f_1^0,f_2^0)(x')|^\eta dx'\right)^{1/\eta}.
\end{align*}
For the last term, using Kolmogorov's inequality to relate the $L^{\eta}$ and $L^{1/2,\infty}$ norms and the boundedness of $T$ from $L^{1}\times L^1$ to $L^{1/2,\infty}$, we obtain for any $\eta\in(0,\tfrac{1}{2})$,
\begin{equation*}\begin{split}
\Big(\frac{1}{|B(x,\varepsilon/2)|}&\int_{B(x,\varepsilon/2)}|T(f_1^0,f_2^0)(x')|^\eta dx' \Big)^{1/\eta}
\\&= ||T(f_1^0,f_2^0)||_{L^{\eta}(B(x,\tfrac{\varepsilon}{2}),\frac{dx}{|B(x, \tfrac{\varepsilon}{2})|})}
\\&\leq C_{\eta}||T(f_1^0,f_2^0)||_{L^{1/2,\infty}(B(x, \tfrac{\varepsilon}{2}),\frac{dx}{|B(x, \tfrac{\varepsilon}{2})|})}
\\&\leq C_{\eta} ||T||_{L^1\times L^1\to L^{1/2,\infty}}\mathcal{M}(f_1,f_2)(x).
\end{split}\end{equation*}
Combining all the terms, we finally arrive at
\begin{equation*}\begin{split}
|\widetilde{T}_{\varepsilon}(f_1,f_2)(x)| &\leq c_{n}(||\omega||_{\textup{Dini}}+C_{\eta}||T||_{L^1\times L^1\to L^{1/2,\infty}}) \mathcal{M}(f_1,f_2)(x) \\&+ M_{\eta}(|T(f_1,f_2)|)(x),
\end{split}\end{equation*}
which taking into account \eqref{eq:difference} and \eqref{eq:weak11} leads to the desired result.
\end{proof}

As a corollary of the previous result follows the weak boundedness of the maximal truncation of $T$.

\begin{Corollary}
Let $T$ be a bilinear Calder\'on--Zygmund operator with Dini-con-tinuous kernel $K$. Then
\begin{equation}
||T_{\sharp}||_{L^1\times L^1 \to L^{1/2,\infty}} \lesssim (C_K + ||\omega||_{\textup{Dini}}+||T||_{L^{q_1}\times L^{q_2}\to L^q}).
\end{equation}
\end{Corollary}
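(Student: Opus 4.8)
The plan is to read the claim directly off Cotlar's inequality (Theorem~\ref{thm:Cotlar}) by taking $L^{1/2,\infty}$ quasinorms on both sides of \eqref{eq:cotlar} and estimating the two resulting terms separately. Fix once and for all some $\eta\in(0,\tfrac12)$, say $\eta=\tfrac14$, and recall that $M_\eta(g)=M(|g|^\eta)^{1/\eta}$. Since $L^{1/2,\infty}$ is a quasi-Banach space, there is a constant $c$ with $\|g+h\|_{L^{1/2,\infty}}\le c\big(\|g\|_{L^{1/2,\infty}}+\|h\|_{L^{1/2,\infty}}\big)$, so applying this to \eqref{eq:cotlar} it suffices to control $\|\mathcal M(\vec f)\|_{L^{1/2,\infty}}$ and $\|M_\eta(|T(\vec f)|)\|_{L^{1/2,\infty}}$ by a dimensional multiple of $\|f_1\|_{L^1}\|f_2\|_{L^1}$; the dependence on $C_K+\|\omega\|_{\textup{Dini}}+\|T\|_{L^{q_1}\times L^{q_2}\to L^q}$ is already carried by the explicit factor in \eqref{eq:cotlar}.

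For the first term I would invoke the classical weak-type bound $\|\mathcal M(\vec f)\|_{L^{1/2,\infty}}\lesssim\|f_1\|_{L^1}\|f_2\|_{L^1}$ for the bilinear maximal operator from \cite{LOPTT} — the very estimate already used in the proof of Lemma~\ref{Lem:recursion} — whose constant is purely dimensional. For the second term I would first apply Lemma~\ref{weak11} to get $\|T(\vec f)\|_{L^{1/2,\infty}}\lesssim(\|T\|_{L^{q_1}\times L^{q_2}\to L^q}+\|\omega\|_{\textup{Dini}})\|f_1\|_{L^1}\|f_2\|_{L^1}$, and then use that $M_\eta$ is bounded on $L^{1/2,\infty}$ whenever $\eta<\tfrac12$: writing $h=|g|^\eta$ one has $\|h\|_{L^{1/(2\eta),\infty}}=\|g\|_{L^{1/2,\infty}}^\eta$, and since $1/(2\eta)>1$ the Hardy–Littlewood maximal operator is bounded on $L^{1/(2\eta),\infty}$ (indeed on $L^{1/(2\eta)}$), so $\|Mh\|_{L^{1/(2\eta),\infty}}\lesssim_\eta\|h\|_{L^{1/(2\eta),\infty}}$; raising to the power $1/\eta$ gives $\|M_\eta(g)\|_{L^{1/2,\infty}}\lesssim_\eta\|g\|_{L^{1/2,\infty}}$. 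Taking $g=T(\vec f)$ and combining the three displayed bounds yields the corollary.

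There is essentially no obstacle here: this really is a routine corollary. The only point requiring a word of care is the passage through the non-locally-convex space $L^{1/2,\infty}$, handled by its quasi-triangle inequality and by the elementary mapping property of $M_\eta$ on weak spaces described above; since $\eta$ is a fixed absolute constant, the $\eta$-dependence of the implicit constants is harmless, and the final constant depends only on the dimension.
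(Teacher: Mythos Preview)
Your argument is correct and follows the same overall scheme as the paper: take the $L^{1/2,\infty}$ quasinorm in Cotlar's inequality and control the two pieces via the weak bound for $\mathcal M$ from \cite{LOPTT} and Lemma~\ref{weak11}. The only substantive difference is in how you handle $M_\eta(|T(\vec f)|)$. The paper does not pass through an operator bound for $M_\eta$ on $L^{1/2,\infty}$; instead it decomposes $M$ into finitely many dyadic maximal operators $M_u$, covers the level set $\{M_u(|T(\vec f)|^\eta)^{1/\eta}>\lambda\}$ by maximal dyadic cubes, and then applies Kolmogorov's inequality on that set together with $T:L^1\times L^1\to L^{1/2,\infty}$. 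Your route---factoring as $M_\eta:L^{1/2,\infty}\to L^{1/2,\infty}$ composed with $T$---is cleaner and more conceptual, while the paper's argument is more hands-on but self-contained. One small caveat: your parenthetical ``(indeed on $L^{1/(2\eta)}$)'' does not by itself justify the $L^{1/(2\eta),\infty}\to L^{1/(2\eta),\infty}$ bound, since strong $(p,p)$ boundedness of a sublinear operator does not imply $L^{p,\infty}\to L^{p,\infty}$ boundedness in general; for $M$ this is of course true (e.g.\ split $f=f\mathbf 1_{\{|f|>\lambda/2\}}+f\mathbf 1_{\{|f|\le\lambda/2\}}$ and use the weak $(1,1)$ bound), but you should state the correct reason rather than the parenthetical one.
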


\begin{proof}
 Fix $\eta\in(0,1/2)$ and use the previous result together with the weak boundedness of the multilinear maximal function and bilinear Calder\'on--Zygmund operators and the fact that $M_{\eta}\circ T:L^{1}\times L^1 \to L^{1/2,\infty}$. To prove the latter, notice that for the Hardy-Littlewood maximal function using Lemma~\ref{lem:dyadicCoveringMod}, we can write
\[M(f)\eqsim \sum_{u=1}^{3^n} M_u (f),\]
where
\[M_u(f):= \sup_{\substack{Q\ni x\\ Q\in \mathcal D^u}}\frac 1{|Q|}\int_Q |f(y)|dy.\]
Therefore,
\begin{align*}
\Big|\{x: M(|T(f_1,f_2)|^\eta)(x)^{\frac 1\eta}>\lambda\}\Big| &\le \sum_{u=1}^{3^n}\Big|\{x: M_u(|T(f_1,f_2)|^\eta)(x)^{\frac 1\eta}>\lambda/3^n\}\Big|.
\end{align*}
Denote
\[
E_u:= \{x\in \bbR^n: M_u(|T(f_1,f_2)|^\eta)(x)^{\frac 1\eta}>\lambda/3^n\}.
\]
We can find a collection of maximal dyadic cubes $\{Q_j\}_j$ such that $E_u=\cup_j Q_j$ and
\[
\frac 1{|Q_j|}\int_{Q_j}  |T(f_1,f_2)|^\eta > \lambda^\eta (3^n)^{-\eta},
\]
which means that
\[
|E_u|\le (3^n)^{\eta}\lambda^{-\eta} \int_{E_u} |T(f_1,f_2)|^\eta, \quad u=1,\ldots,3^n.
\]
Now using Kolmogorov's inequality and the fact that $T:L^1\times L^1 \to L^{1/2,\infty}$, and assuming that $\eta<1/2$, we get
\[
\int_{E_u} |T(f_1,f_2)|^\eta \lesssim \| T(f_1,f_2)\|_{L^{\frac 12, \infty}(E_u, \frac{dx}{|E_u|})}^\eta |E_u|\le \|f_1\|_1^\eta \|f_2\|_1^\eta |E_u|^{1-2\eta}
\]
Combining both estimates, it follows that
\[
|E_u|\le \lambda^{-\eta} (3^n)^{\eta} \|f_1\|_1^\eta \|f_2\|_1^\eta |E_u|^{1-2\eta},
\]
which is exactly,
\[
\lambda |E_u|^2\leq c_{n,\eta} \|f_1\|_1 \|f_2\|_1.
\]
\end{proof}

\section{Acknowledgements}
The authors would like to thank Prof. Tuomas Hyt\"onen for suggesting this problem and for carefully reading this manuscript as well as many helpful comments which have improved the quality of this paper.

\end{document}